\def\marginpar#1{\ignorespaces}
\def\Rcup{ \breve{ R } }
\def\Rcap{ \hat{R} }
\DeclareMathOperator\erfc{erfc}
\DeclarePairedDelimiter\floor{\lfloor}{\rfloor}
\def\Sup { S^{\uparrow }}
\def\Sdo { S^{\downarrow }}
\def\Gup { G^{\uparrow }}
\def\Gdo { G^{\downarrow }}
\def\Jup { J^{\uparrow }}
\def\Jdo { J^{\downarrow }}
\def\Ldo { L^{\downarrow }}
\def\Lup { L^{\uparrow }}
\def\Zdo { Z^{\downarrow }}
\def\Zup { Z^{\uparrow }}
\newtheorem{theorem}{Theorem}[section]
\newtheorem{lemma}[theorem]{Lemma}
\newtheorem{proposition}[theorem]{Proposition}
\newtheorem{corollary}[theorem]{Corollary}
\numberwithin{equation}{section}
\begin{document}
\title[Order statistics]{Extreme order statistics of random walks}

\author[Jim Pitman]{{Jim} Pitman}
\address{Department of Statistics, University of California, Berkeley.  
} \email{pitman@stat.berkeley.edu}

\author[Wenpin Tang]{{Wenpin} Tang}
\address{Department of Industrial Engineering and Operations Research, Columbia University. 
} \email{wt2319@columbia.edu}

\date{\today} 
\begin{abstract}
This paper is concerned with the limit theory of the extreme order statistics derived from random walks.
We establish the joint convergence of the order statistics near the minimum of a random walk in terms of the Feller chains.
Detailed descriptions of the limit process are given in the case of simple symmetric walks and Gaussian walks.
\end{abstract}
\maketitle
\textit{Key words :} Bessel processes, Brownian embedding, fluctuation theory, limit theorems, order statistics, path decomposition, random walk.

\smallskip
\maketitle
\textit{AMS 2010 Mathematics Subject Classification:} 60G50, 60F17, 60J05.

\section{Introduction}

\quad Extreme value theory and order statistics have received considerable interest in probability theory and statistics.
They have a variety of applications including nonparametric tests \cite{Lehmann}, kernel density estimation \cite{Silver86}, physics \cite{MOR11, TR77}, environmental sciences \cite{KPN02}, financial risk analysis \cite{EKM97}, and reliability engineering \cite{DN03}.
The most studied case is order statistics derived from a sequence of independent and identically distributed (i.i.d.) random variables \cite{ABN92, DN03, Reiss89}.
We refer to the books \cite{ABN92, DN03, Reiss89} for examples and related results.

\quad In this paper, following earlier work of Pollaczek \cite{Pol52}, Wendel \cite{Wendel60} and others,
we study the order statistics derived from a random walk $S = (S_k, \, 0 \le k \le n)$, with $S_0 = 0$, and $S_k = \sum_{i = 1}^k X_i$ with increments
$X_1, X_2, \ldots, X_n$ that are either {\em exchangeable}, or {\em independent and identically distributed (i.i.d.)}.
For $0 \le k \le n$, let $M_{k,n} = M_k(S_0, \ldots, S_n)$ be the $k^{th}$ order statistics derived from the steps $(S_0, \ldots, S_n)$ of the walk $S$. So
\begin{equation}
\label{eq:Mkn}
\{0 = S_0, S_1, \ldots, S_n \} = \{ M_{k,n} , 0 \le k \le n \} \mbox{ with } M_{0,n} \le M_{1,n} \le \cdots \le M_{n,n}.
\end{equation}
We also use the notations 
\begin{equation}
\label{eq:notdef}
\underline{S}_n: = M_{0,n} := \min_{0 \le k \le n} S_k \qquad \mbox{ and } \qquad \overline{S}_n : = M_{n,n}: = \max_{0 \le k \le n} S_k.
\end{equation}
The fluctuation theory of random walks, developed in the 1950s and 1960s by Sparre Andersen, Spitzer, Baxter, Feller and Wendel, describes the distributions of these order statistics, especially the minimum, the maximum, and related random variables such as the random times when these values are attained, 
either for the first or last time.
Development of this theory was motivated by diverse applications of random walks, 
first in queueing theory \cite{Asmussen03, Pol52} and mathematical statistics \cite{BBBB, GJ14},
and more recently in finance \cite{Dassios95, JYC09}.
The systematic study of large $n$ limit distributions of these functionals of random walks led via Donsker's theorem and its generalizations to the fluctuation theory of Brownian motion and L\'evy processes, as reviewed in the recent monographs \cite{Bertoinbook, Ky06}.

\quad Beside 
this  mature fluctuation theory of random walks and L\'evy processes, 
there has been a recent resurgence of interest in the order statistics of random walks in the physics community.
The papers \cite{SM12, SM14} are typical of this literature, 
starting from particular models of random walk
such as symmetric walks with continuous increment distributions, and making sustained calculations
in these models. 
Many of these calculations recover results that are known in the fluctuation theory of random walks, even if by now somewhat buried in the literature.
But some of these calculations have led to new limit distributions and asymptotic formulas, whose place relative to the standard fluctuation theory does not seem obvious.
As a case in point, Schehr and Majumdar \cite{SM12} focused attention on the spacings or gaps between the random walk order statistics,
\begin{equation}
\label{eq:Dgap}
D_{k,n}:= M_{k,n}- M_{k-1,n}, \quad 1 \le k \le n.
\end{equation}
It was observed by Schehr and Majumdar \cite[(9)]{SM12} that if the distribution of $X_1$ has a symmetric density with finite variance, then as $n \to \infty$ the expected spacing $\mathbb{E}D_{k,n}$ has a limit,
for which they gave an integral expression involving the Fourier transform of the density of $X_1$.
Some known results, reviewed in Section \ref{sc32}, provide explicit descriptions of both the exact 
distribution of $M_{k,n}$, and its asymptotic distributions in various limit regimes as $n \to \infty$. 
But much less is easily found in the literature regarding the large $n$ limit behavior of the differences $D_{k,n}$, which involves the joint distributions of variables in the sequence $(M_{0,n}, \, M_{1,n}, \ldots)$.

\quad Our first result provides a complete description of the joint limit behavior of the order statistics near the minimum of a random walk, relative to the location of the minimum, for any distribution of the increments of the random walk.
The construction relies on the Feller chains $\Sup$ and $\Sdo$, 
introduced by Feller \cite{Feller2}.
The {\em upward Feller chain} $\Sup$ with $\Sup_0 = 0$ is the sequence of partial sums of those increments $X_k$ of the random walk $S$ with $S_k >0$, 
and the {\em downward Feller chain} $\Sdo$ with $\Sdo_0 = 0$ is the sequence of partial sums of those increments $X_k$ of the random walk $S$ with $S_k \le 0$.

\begin{theorem}
\label{thm:main}
Let $(S_k, \, k \ge 0)$ be a real-valued walk with exchangeable increments $X_1, X_2, \ldots$, whose common distribution satisfies $\mathbb{P}(X_1 = 0) < 1$.
For $0 \le k \le n$, let $M_{k,n}$ be the sequence of order statistics defined by \eqref{eq:Mkn} of the $n$-step walk $(S_k, \, 0 \le k \le n)$, and $W_{k,n}: = M_{k, n} - M_{0,n}$.
Let $0 = W_0 \le W_1 \le \cdots$ be the order statistics
\begin{equation}
\label{eq:Wkrep}
W_k:= M_k ( \{ - \Sdo_n, \, n \ge 0 \}  \cup \{\Sup_n, \, n \ge 1 \}),
\end{equation}
derived from the two Feller chains $(\Sdo_n, \, n \ge 0 )$ and $(\Sup_n, \, n \ge 1 )$ generated by the walk $S$.
Then we have the following:
\begin{itemize}
\item[(i)] 
For each finite $K$,  there is the convergence in total variation of finite-dimensional distributions of order statistics 
\begin{equation}
\label{tvconv}
(W_{k,n}, 1 \le k \le K )  \stackrel{TV}{\longrightarrow}( W_k, 1 \le k \le K ) \quad \mbox{as } n \to \infty.
\end{equation}
\item[(ii)]
For each fixed $w > 0$, there is the convergence in total variation of laws of counting processes
\begin{equation}
\label{tvconv2}
\left(\sum_{k=1}^\infty 1( W_{k,n} \le v), \, 0 \le v \le w \right) \stackrel{TV}{\longrightarrow} \left( \sum_{k=1}^\infty 1(W_k \le v),  0 \le v \le w \right) \quad \mbox{as } n \rightarrow \infty.
\end{equation}
\end{itemize}
\end{theorem}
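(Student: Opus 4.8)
The plan is to exploit the classical path decomposition of a random walk at the time of its minimum, which is exactly what the Feller chains encode. Let $\rho_n$ be the (say, last) index at which the $n$-step walk attains its minimum $M_{0,n}$. The pre-minimum path $(S_{\rho_n} - S_{\rho_n - j}, 0 \le j \le \rho_n)$ and the post-minimum path $(S_{\rho_n + j} - S_{\rho_n}, 0 \le j \le n - \rho_n)$ are, after the usual reversal, a walk staying strictly positive and a walk staying nonnegative, respectively; the multiset $\{W_{k,n}, 0 \le k \le n\}$ of centered order statistics is precisely the union of the values visited by these two paths. The key structural input is that, for exchangeable increments, the increments sorted into ``those along the post-minimum excursion'' versus ``those along the reversed pre-minimum piece'' are, conditionally on their counts, again exchangeable, and as $n \to \infty$ the pair of path fragments near the minimum converges — in a strong (total variation on finite windows) sense — to the pair of Feller chains $(\Sdo, \Sup)$. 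First I would make this precise: invoke (or quote from the earlier fluctuation-theory section) the statement that the first $L$ increments of the reversed pre-minimum path and the first $L$ increments of the post-minimum path jointly converge in total variation to the first $L$ steps of $\Sup$ and $\Sdo$. This is where the hypothesis $\mathbb{P}(X_1 = 0) < 1$ enters, guaranteeing that the walk does not get stuck and that the relevant renewal/ladder structure is non-degenerate.

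Next I would reduce the statement about order statistics to a statement about the underlying path fragments truncated at a finite level. Fix $w > 0$. On the event that the $k$-th smallest centered value $W_{k,n}$ is at most $w$, that value is contributed by one of the two path fragments while it is still below level $w + \varepsilon$; and a walk fragment with a positive drift away from $0$ (which is the qualitative behavior forced by conditioning to stay positive, or rather by the ladder structure) spends only $O_P(1)$ steps below any fixed level $w$. Concretely, I would show that there exists a finite (random, but tight in $n$) number $L = L(w)$ such that all the order statistics $W_{k,n} \le w$ are determined by the first $L$ increments of each of the two fragments, with probability tending to $1$ uniformly; the analogous statement holds for the limit object $(W_k)$ built from $(\Sup, \Sdo)$, since each Feller chain drifts to $+\infty$ and hence visits $[0, w]$ only finitely often almost surely. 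Part (ii) — convergence in total variation of the counting process $v \mapsto \sum_k 1(W_{k,n} \le v)$ on $[0,w]$ — then follows because that counting process is a deterministic (measurable) functional of the finitely many relevant increments, and total-variation convergence is preserved under pushforward by a fixed measurable map. Part (i) is the special case extracted by reading off the first $K$ order statistics, again a measurable functional of the same finite data once $w$ is taken large enough that $W_K \le w$ with high probability; a short $\varepsilon$-argument lets $K$ (equivalently $w$) be sent to infinity inside the total-variation bound.

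The main obstacle, I expect, is controlling the ``tail'' uniformly in $n$: one must rule out the possibility that for some $n$ an order statistic below level $w$ is produced by a portion of the walk that is far (in index) from the minimum — e.g. a brief dip of the post-minimum path back down near $0$ after a long excursion, or an analogous event for the pre-minimum path. Handling this requires a quantitative estimate, uniform in $n$, on $\mathbb{P}(S_j - M_{0,n} \le w \text{ for some } j \text{ with } |j - \rho_n| > L)$, shrinking as $L \to \infty$. I would obtain this from the ladder-height renewal representation: the successive returns of each fragment to a neighborhood of its running minimum form a renewal-type sequence with increments bounded below in a stochastic sense (again using $\mathbb{P}(X_1=0)<1$ and the finiteness assumptions implicit in having a genuine minimum), so the number of fragment-values in $[0,w]$ has an exponential-type tail uniform in $n$. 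Granting this uniform tightness, the remainder is the bookkeeping of translating between the path fragments and their order statistics, and the routine fact that total-variation convergence of the driving finite-dimensional data transfers to any measurable functional — here the sorted list, respectively the counting process, of values below a fixed level.
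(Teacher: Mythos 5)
Your overall plan (decompose at the minimum, identify the centered values with the two path fragments, pass to the Feller chains) is in the right spirit, but the proposal has a genuine gap at its central step, and it is exactly the step the paper's argument is designed to avoid. You reduce everything to two inputs: (a) total-variation convergence of finite windows of the pre-/post-minimum fragments to the Feller chains, and (b) a uniform-in-$n$ estimate that all order statistics below a fixed level $w$ are produced within a bounded window around the minimum. Input (b) is the whole difficulty, and your sketched justification --- a ladder-height renewal representation with ``exponential-type tails uniform in $n$'' --- is not available in the stated generality: the theorem assumes only \emph{exchangeable} increments with $\mathbb{P}(X_1=0)<1$, with no independence and no moment hypotheses, so the conditioned fragments carry no renewal structure, and even in the i.i.d. case (heavy tails, or a walk drifting to $-\infty$, where $N^{+}_\infty$ can be finite) the exponential-tail claim is unjustified; only tightness is needed, but you do not actually establish it. Input (a) is also shaky as a quotable fact: as the paper notes after Lemma \ref{lem:Fellerinf}, the references only give convergence in distribution of the fragments, and the total-variation strengthening is itself deduced from the coupling that proves Theorem \ref{thm:main}, so invoking it as an external input is close to circular.

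The paper's proof removes the need for any uniform-in-$n$ estimate by using Feller's combinatorial lemma (Lemma \ref{lem:Fellern}) to get, for \emph{every fixed} $n$, an exact identity in distribution: $(W_{k,n},\,0\le k\le n)$ has the same law as the order statistics of $\{-\Sdo_j,\,j\le N^{-}_n\}\cup\{\Sup_j,\,j\le N^{+}_n\}$, i.e. of the finite-horizon pieces of the \emph{same} infinite-horizon Feller chains that define the limit. As $n$ increases to $n+1$ this list changes only by insertion of one new value, so for fixed $k$ the surrogate order statistic is weakly decreasing in $n$ and almost surely eventually equal to $W_k$; the coupling inequality then bounds the total-variation distance by $\mathbb{P}(M_{>n}\le W_K)$ with $M_{>n}$ the minimum of all future chain values, and this tends to $0$ by transience of the chains (Lemma \ref{lem:Fellerinf}(ii)). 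Note that this exact identity also gives, for free, the tightness you were trying to prove: the number of values of the $n$-step walk within $w$ of its minimum is equal in distribution to the number of infinite-horizon Feller-chain values in $[0,w]$, which is almost surely finite. If you want to salvage your route, you should replace the renewal heuristic by this identity --- at which point you have essentially reconstructed the paper's argument.
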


\quad As will be seen in the proof of Theorem \ref{thm:main}, by letting $\alpha_n:= \max\{k \le n: S_k = M_{0,n}\}$, the values $W_k$ derived from $( -\Sdo_n, \, n \ge 0)$ represent contributions to the limiting process from values of $S_j$ with $j$ at or before time $\alpha_n$, while the values $W_k$ derived from $(\Sup_n, \, n \ge 1 )$ are contributions from strictly after time $\alpha_n$.
See also \cite[Theorem 4]{V13} for a related result.

\quad Recall from \eqref{eq:Dgap} that $D_{k,n}:= M_{k,n}- M_{k-1,n} = W_{k,n} - W_{k-1,n}$.
Observe the reversibility of spacings
\begin{equation}
\label{eq:revspacings}
(D_{n,n}, \ldots, D_{1,n})  \stackrel{d}{=}  (D_{1,n}, \ldots, D_{n,n}),
\end{equation}
which follows from the well known duality relation $(S_n - S_k, \, 0 \le k \le n) \stackrel{d} {=}(S_k, \, 0 \le k \le n)$,
implied by the reversibility of increments $(X_n, \ldots, X_1) \stackrel{d}{=} (X_1, \ldots, X_n)$.
Combining Theorem \ref{thm:main} with some known results (see  Lemma \ref{lem:orderid}) regarding the distributions of $M_{k,n}$ yields the following proposition.

\begin{proposition}
\label{prop:main}
With notations as in Theorem \ref{thm:main}, and $X:= X_1$ the generic increment of the walk with exchangeable increments, we have:
\begin{itemize}
\item [(i)]
For each finite $K$, there is the convergence in total variation of finite-dimensional distributions of gaps of order statistics
\begin{equation}
\label{eq:tvconvD}
(D_{k,n}, \, 1 \le k \le K) \stackrel{TV}{\longrightarrow} (D_{k}, \, 1 \le k \le K) \quad \mbox{as } n \rightarrow \infty,
\end{equation}
where $D_k: = W_k - W_{k-1}$. 
Moreover, these convergences hold together with convergence of $p^{th}$ moments for every $0 < p < \infty$ such that $\mathbb{E} |X|^p < \infty$.
\item[(ii)]
If $\mathbb{E}|X| < \infty$, then for each fixed $k = 1,2, \ldots$
\begin{equation}
\label{eq:limexps}
\lim_{n \to \infty } \mathbb{E} D_{k,n}  = \mathbb{E} D_k = \frac{\mathbb{E} S_k^+ }{k} + \mathbb{E}[\mathbb{E}(X|\mathcal{E})^{-}],
\end{equation}
where $\mathcal{E}$ is the exchangeable $\sigma$-field, and $x^{+}:=\max(x,0)$ and $x^{-}:=-\min(x,0)$ for all real $x$.
We have $\lim_{k \to \infty} \mathbb{E}D_k = \mathbb{E}| \mathbb{E}(X|\mathcal{E})|$.
Assume further that $\mathbb{E}X^2 < \infty$ and $\mathbb{E}(X|\mathcal{E}) = 0$ almost surely. 
Then
\begin{equation}
\label{eq:dasym0}
\mathbb{E} D_k \sim \frac{ \mathbb{E}\sqrt{\mathbb{E}(X^2|\mathcal{E})} }{ \sqrt{2 \pi} }  k^{-1/2}  \,\, \mbox{and} \quad \mathbb{E} W_k \sim \mathbb{E}\sqrt{\mathbb{E}(X^2|\mathcal{E})} \sqrt{ \frac{ 2 k}{\pi} } \quad \mbox{as } k \to \infty.
\end{equation}
\end{itemize}
\end{proposition}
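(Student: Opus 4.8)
The plan is to deduce both parts from Theorem~\ref{thm:main} together with the classical fluctuation identities for the law of $M_{k,n}$ collected in Lemma~\ref{lem:orderid}.

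For part~(i), the convergence \eqref{eq:tvconvD} is immediate from \eqref{tvconv}: the map $(w_1,\dots,w_K)\mapsto(w_1,w_2-w_1,\dots,w_K-w_{K-1})$ is a measurable bijection of $\mathbb{R}^K$, total variation distance does not increase under pushforward by a measurable map, and $D_{k,n}=W_{k,n}-W_{k-1,n}$, $D_k=W_k-W_{k-1}$. Given this convergence in distribution, the convergence of $p^{th}$ moments follows once $\{D_{k,n}^p:n\ge1\}$ is shown to be uniformly integrable, and for this two ingredients are needed. (a) $\mathbb{E}D_k^p<\infty$: since $D_k\le W_k$, estimate the tail of $W_k$ from \eqref{eq:Wkrep} and the independence of the Feller chains,
\[
\mathbb{P}(W_k>t)\le\mathbb{P}(N^\uparrow(t)\le k-1)\,\mathbb{P}(N^\downarrow(t)\le k-1),
\]
where $N^\uparrow(t),N^\downarrow(t)$ are the numbers of points of $\{\Sup_n,n\ge1\}$, $\{-\Sdo_n,n\ge1\}$ in $(0,t]$; whichever of the two chains is genuinely transient cannot reach height $t$ within the bounded number $k-1$ of steps without a jump of size at least $ct/k$, so that factor is $O(\mathbb{P}(|X|\ge ct))$ while the other is at most $1$, whence $\int_0^\infty pt^{p-1}\mathbb{P}(W_k>t)\,dt<\infty$ as soon as $\mathbb{E}|X|^p<\infty$. (b) A uniform-in-$n$ version of this estimate for $W_{k,n}$: decomposing the $n$-step walk at $\alpha_n=\max\{k\le n:S_k=M_{0,n}\}$ exhibits $W_{k,n}$ as the $k^{th}$ order statistic of the superposition of the value sets of the pre- and post-minimum paths — conditionally independent given $\alpha_n$, and converging to $\{-\Sdo_n\}$ and $\{\Sup_n\}$ — and Lemma~\ref{lem:orderid} is used to make the product bound uniform in $n$.

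For part~(ii), by part~(i) with $p=1$ we have $\mathbb{E}D_{k,n}\to\mathbb{E}D_k$, so the content is the evaluation of $\mathbb{E}D_k$. Writing $N(v)=N^\uparrow(v)+N^\downarrow(v)$ for the number of points of $\{\Sup_n,n\ge1\}\cup\{-\Sdo_n,n\ge1\}$ in $(0,v]$, one has the elementary identities $\mathbb{E}W_k=\int_0^\infty\mathbb{P}(N(v)\le k-1)\,dv$ and $\mathbb{E}D_k=\int_0^\infty\mathbb{P}(N(v)=k-1)\,dv$; equivalently one computes $\lim_n(\mathbb{E}M_{k,n}-\mathbb{E}M_{k-1,n})$ from Lemma~\ref{lem:orderid}. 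Splitting the count into its $\Sup$-part and its $-\Sdo$-part (independent, by Feller's theorem), the $\Sup$-part is governed by the Spitzer identity $\mathbb{E}\overline{S}_k=\sum_{j=1}^k\mathbb{E}S_j^+/j$ contained in Lemma~\ref{lem:orderid} and contributes $\mathbb{E}S_k^+/k$ to the $k^{th}$ gap, while the $-\Sdo$-part is a renewal statement — $-\Sdo$ has drift $-\mathbb{E}(X\mid\mathcal E)$, and is eventually constant when $\mathbb{E}(X\mid\mathcal E)\ge0$ — contributing $\mathbb{E}(X\mid\mathcal E)^-$ to each gap on average; the renewal computation is cleanest done separately in the cases $\mathbb{E}(X\mid\mathcal E)$ positive, zero, negative and then integrated over $\mathcal E$. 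This yields \eqref{eq:limexps}. The remainder is soft: the law of large numbers applied conditionally on $\mathcal E$, together with uniform integrability of $S_k^+/k$ (from $\mathbb{E}|X|<\infty$), gives $\mathbb{E}S_k^+/k\to\mathbb{E}[\mathbb{E}(X\mid\mathcal E)^+]$, so $\lim_k\mathbb{E}D_k=\mathbb{E}[\mathbb{E}(X\mid\mathcal E)^+]+\mathbb{E}[\mathbb{E}(X\mid\mathcal E)^-]=\mathbb{E}|\mathbb{E}(X\mid\mathcal E)|$; and under $\mathbb{E}X^2<\infty$ with $\mathbb{E}(X\mid\mathcal E)=0$, the central limit theorem conditionally on $\mathcal E$ gives $\mathbb{E}[S_k^+\mid\mathcal E]\sim\sqrt{k\,\mathbb{E}(X^2\mid\mathcal E)/(2\pi)}$, hence $\mathbb{E}D_k=\mathbb{E}S_k^+/k\sim\mathbb{E}\sqrt{\mathbb{E}(X^2\mid\mathcal E)}\,(2\pi k)^{-1/2}$, and since then $\mathbb{E}W_k=\sum_{j=1}^k\mathbb{E}D_j=\mathbb{E}\overline{S}_k$ the invariance principle yields $\mathbb{E}W_k\sim\mathbb{E}\sqrt{\mathbb{E}(X^2\mid\mathcal E)}\sqrt{2k/\pi}$.

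I expect the main obstacle to be the uniform-in-$n$ tail control of $W_{k,n}$ in part~(i). Crude dominations fail: bounding $W_{k,n}$ by $\max_{j\le n}X_j$, or by the first few steps of the post-minimum path, loses too much, since for instance the first step of a walk conditioned to stay positive carries one more power of integrability than $\mathbb{E}|X|^p$ can supply. One must use that $W_{k,n}$ is an order statistic of a superposition of two conditionally independent parts — so that a large value of $W_{k,n}$ forces \emph{both} parts to be sparse near $0$ — and show that this product bound persists uniformly in $n$; this is precisely where the independence of the Feller chains and the explicit identities of Lemma~\ref{lem:orderid} enter. A secondary delicate point in part~(ii) is pinning down the exact constant $\mathbb{E}(X\mid\mathcal E)^-$ in the downward renewal contribution and controlling the interleaving of the two chains' order statistics in the general exchangeable (rather than i.i.d.) setting.
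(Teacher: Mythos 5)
Your part (i) has a genuine gap precisely at the point you identify as the main obstacle: the uniform integrability needed for moment convergence is never established, and the sketched route would not work as stated. First, your product bound $\mathbb{P}(W_k>t)\le\mathbb{P}(N^\uparrow(t)\le k-1)\,\mathbb{P}(N^\downarrow(t)\le k-1)$ invokes independence of the two Feller chains, which is available only for i.i.d.\ increments (Lemma \ref{lem:Fellerinf}(iii)); the proposition is stated for exchangeable increments. Second, the claim that one of the two factors is $O(\mathbb{P}(|X|\ge ct))$ is false in general: the increments of $\Sup$ are not distributed like $X$ (the first one dominates $S_{\tau^+}$), and for, say, a symmetric walk with $\mathbb{P}(|X|>t)\asymp t^{-\alpha}$, $\alpha\in(1,2)$, one has $\mathbb{P}(S_{\tau^+}>t)\asymp t^{-\alpha/2}$, so a single factor is much heavier than $\mathbb{P}(|X|>ct)$ -- it is only the \emph{product} of the two factors that has the right order, and you explicitly discard one of them. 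Third, the ``uniform-in-$n$ version'' for $W_{k,n}$ is left as a one-line hope that Lemma \ref{lem:orderid} will make the bound uniform, with no argument. The paper closes all of this with one observation you dismissed as a ``crude domination'': on the coupled space built in the proof of Theorem \ref{thm:main}, $\widetilde W_{k,n}\stackrel{d}{=}W_{k,n}$ is, by \eqref{eq:noninc}, weakly decreasing in $n\ge k$ and eventually equal to $W_k$ almost surely; hence $\widetilde W_{k,n}\le\widetilde W_{k,k}$, and $\widetilde W_{k,k}\stackrel{d}{=}W_{k,k}\le\sum_{j\le k}|X_j|$ has a finite $p^{th}$ moment whenever $\mathbb{E}|X|^p<\infty$. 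Dominated convergence then gives convergence of $p^{th}$ moments for the $W$'s, and likewise for the $D$'s, with no tail analysis of the Feller chains and no independence assumption. (Your reduction of the TV statement for the gaps to \eqref{tvconv} via the differencing map is fine and is implicitly what the paper does.)

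For part (ii) your second, ``equivalent'' route is essentially the paper's proof: Lemma \ref{lem:orderid} plus Spitzer's formula (which for exchangeable increments is Lemma \ref{lem:expmm}, not part of Lemma \ref{lem:orderid}) give $\mathbb{E}D_{k,n}=\mathbb{E}S_k^+/k+\mathbb{E}S_{n-k+1}^-/(n-k+1)$, and since $S_n/n$ is a reversed martingale converging a.s.\ and in $L^1$ to $\mathbb{E}(X|\mathcal E)$, the second term tends to $\mathbb{E}[\mathbb{E}(X|\mathcal E)^-]$; no case-by-case renewal analysis of the downward chain is needed, and the heuristic that $-\Sdo$ ``contributes $\mathbb{E}(X|\mathcal E)^-$ per gap'' is not what carries the proof. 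The limit $\lim_k\mathbb{E}D_k=\mathbb{E}|\mathbb{E}(X|\mathcal E)|$ and the asymptotics \eqref{eq:dasym0} then follow as you say, by the conditional LLN/CLT with the domination $\mathbb{E}(S_k^+/\sqrt k\,|\,\mathcal E)\le\sqrt{\mathbb{E}(X^2|\mathcal E)}$. Note, however, that the identity $\lim_n\mathbb{E}D_{k,n}=\mathbb{E}D_k$ on which all of part (ii) rests is exactly the $L^1$ convergence from part (i), so the gap above propagates to part (ii) until it is repaired.
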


\quad The distribution of $(W_k, \, k \ge 0)$ is further simplified in the case of a random walk, due to the following description of the Feller chains in that case, by Bertoin \cite{Bertoin93}.

\begin{theorem}
\label{thm:Bertoin93}
\cite{Bertoin93}
Let $(S_k, \, k \ge 0)$ be a random walk with i.i.d. increments $X_1, X_2, \ldots$, whose common distribution satisfies $\mathbb{P}(X_1 = 0) < 1$.
Then the two Feller chains $\Sup$ and $\Sdo$ are independent, and both are Markov chains, with stationary transition probabilities given by
\begin{align}
& p^{\uparrow}(x, dy): =  \mathbb{P}(x + \Sup_1 \in dy) =  1(y > 0) \, \frac{h^{\uparrow}(y)}{h^{\uparrow}(x)} \, \mathbb{P}(x + X_1 \in dy) , \quad x > 0, \\
& p^{\downarrow}(x, dy): =  \mathbb{P}(x + \Sdo_1 \in dy)=  1(y \le 0) \, \frac{h^{\downarrow}(y)}{h^{\downarrow}(x)} \,\mathbb{P}(x + X_1 \in dy), \quad x \le 0,
\end{align}
where $h^{\uparrow}(x): = \mathbb{E}\left(\sum_{k = 0}^{\tau^{+} - 1}1(S_k > -x) \right)$ with $\tau^{+}: = \inf\{n > 0: S_n > 0\}$, and $h^{\downarrow}(x): = \mathbb{E}\left(\sum_{k = 0}^{\tau^{-0} - 1}1(S_k  \le -x) \right)$ with $\tau^{-0}: = \inf\{n > 0: S_n \le 0\}$.
\end{theorem}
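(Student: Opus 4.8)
The plan, following Bertoin \cite{Bertoin93}, is to identify $\Sup$ and $\Sdo$ as Doob $h$-transforms of the walk killed on leaving a half-line --- the walk ``conditioned to stay $>0$'' and ``conditioned to stay $\le 0$'' --- and to read off independence from the excursion structure of $S$ across the level $0$. Let $Q^{\uparrow}(x,dy):=1(y>0)\,\mathbb{P}(x+X_1\in dy)$ be the sub-Markov kernel on $(0,\infty)$ of the walk killed at $\tau^{-0}:=\inf\{n>0:S_n\le 0\}$, and let $Q^{\downarrow}(x,dy):=1(y\le 0)\,\mathbb{P}(x+X_1\in dy)$ be the analogous kernel on $(-\infty,0]$ of the walk killed at $\tau^{+}$. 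Three things must be shown: (a) $h^{\uparrow}$ (resp.\ $h^{\downarrow}$) is invariant for $Q^{\uparrow}$ (resp.\ $Q^{\downarrow}$), so that $p^{\uparrow}$ and $p^{\downarrow}$ are bona fide --- possibly sub-stochastic --- Markov kernels; (b) $(\Sup_m)_{m\ge 1}$ is Markov with kernel $p^{\uparrow}$ and $(\Sdo_m)_{m\ge 1}$ is Markov with kernel $p^{\downarrow}$; and (c) the two chains are independent.

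For (a) I would first recognise $h^{\uparrow}$ and $h^{\downarrow}$ as ladder-renewal functions. Since $\{\tau^{+}>k\}=\{S_1\le 0,\dots,S_k\le 0\}$, applying the duality relation $(X_1,\dots,X_k)\stackrel{d}{=}(X_k,\dots,X_1)$ to the block $X_1,\dots,X_k$ gives
\[
h^{\uparrow}(x)=\sum_{k\ge 0}\mathbb{P}\!\left(S_1\le 0,\dots,S_k\le 0,\ S_k>-x\right)=\sum_{k\ge 0}\mathbb{P}\!\left(S_k=\min_{0\le i\le k}S_i,\ S_k\in(-x,0]\right),
\]
so $h^{\uparrow}$ is the renewal function of the weak descending ladder height process, and symmetrically $h^{\downarrow}$ is that of the strict ascending ladder height process. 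That such a renewal function is invariant for the walk killed on entering the complementary half-line is classical --- it is the fact underlying the construction of random walks conditioned to stay positive, due to Tanaka and to Bertoin--Doney --- and is proved by decomposing on the first step and using the renewal equation together with the Wiener--Hopf identities relating $\tau^{\pm}$ to the ladder processes. This step also pins down the sub-stochastic defect of $p^{\uparrow}$: the missing mass at $x>0$ equals the probability that the walk started at $x$ never again rises above a fixed level, which is positive exactly when $S_n\to-\infty$ (in which case $\Sup$ is a.s.\ a finite sequence), the kernel being read with an absorbing cemetery.

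For (b) and (c) I would use the decomposition of $S$ into an alternating sequence of \emph{negative phases} (from the current value in $(-\infty,0]$, the walk run until its first entrance into $(0,\infty)$) and \emph{positive phases} (the dual), glued at the overshoots of the crossing steps: by construction the increments occurring during the negative phases are, in order, precisely the increments of $\Sdo$, and those during the positive phases are the increments of $\Sup$. Peeling off one phase at a time with the strong Markov property, a positive phase entered at $y>0$ evolves like $Q^{\uparrow}$ run to killing, except that the entry level $y$ and the landing level $z\le 0$ inherit the overshoot weighting from the preceding crossing step; the renewal identity of step (a), used in reverse, shows this weighting to be exactly the ratio $h^{\uparrow}(y)/h^{\uparrow}(x)$ (resp.\ $h^{\downarrow}$) appearing in $p^{\uparrow}$ (resp.\ $p^{\downarrow}$). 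Reading out the increments yields the stated Markov property of each chain; and since, conditionally on the sequence of phase labels (which phases are positive, which negative), the fragments building the positive phases form a family independent of those building the negative phases, and the conditional joint law does not depend on the labels, the two chains come out independent. (The mechanism is transparent for a simple symmetric walk: the phases are then honest excursions away from $0$ carrying i.i.d.\ fair $\pm$ labels, one separates the two sub-families of i.i.d.\ excursions, and the kernels specialise to $p^{\uparrow}(x,x\pm 1)=(x\pm 1)/(2x)$, a discrete Bessel-type chain that never dies, matching recurrence.)

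The hard part will be the overshoot bookkeeping in (b)--(c) for a non-lattice or asymmetric walk: one must show both that the weighting induced on the entry level by the constraint ``the walk crosses $0$ at exactly this step'' is \emph{precisely} the $h^{\uparrow}$- (resp.\ $h^{\downarrow}$-) ratio, uniformly in the past, and that the launch overshoot and the landing overshoot at two consecutive phase boundaries leave no residual correlation between $\Sup$ and $\Sdo$. Both are consequences of the invariance established in (a) --- this is essentially Bertoin's argument --- but they need care, as does the degenerate regime $S_n\to\pm\infty$, in which one chain terminates after finitely many steps and the defect computed in (a) must be checked against the a.s.\ finiteness of the corresponding phase count.
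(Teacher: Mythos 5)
You should note at the outset that the paper does not prove this statement: it is quoted verbatim from Bertoin \cite{Bertoin93}, and the paper's own machinery (Lemma \ref{lem:Fellern} and Lemma \ref{lem:Fellerinf}) signals the intended derivation, namely Feller's finite-horizon rearrangement identity followed by a passage to the limit. Measured against that, your step (a) is sound: the duality computation identifying $h^{\uparrow}$ and $h^{\downarrow}$ with the renewal functions of the weak descending and strict ascending ladder heights is correct, and the (super)harmonicity of these functions for the walk killed on leaving the half-line is indeed classical. The genuine gap is in your steps (b)--(c), which is where the entire content of the theorem lies. Harmonicity of $h^{\uparrow}$ only makes $p^{\uparrow}$ a legitimate (sub-)Markov kernel; it does not by itself explain why the subsequence of increments $\{X_k:\,S_k>0\}$, read in order, forms a Markov chain with exactly this kernel, nor why it is independent of $\{X_k:\,S_k\le 0\}$. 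The routing of $X_k$ to $\Sup$ or $\Sdo$ is decided by the sign of $S_k$, which is a functional of the whole past of both chains; in particular the step closing a positive phase is assigned to $\Sdo$ even though its conditional law depends on the terminal value of the positive phase, and symmetrically at up-crossings. Your independence argument --- ``conditionally on the phase labels the positive and negative fragments are independent families and the conditional law does not depend on the labels'' --- is essentially a restatement of the conclusion: the phase boundaries are determined by both chains, and no argument is offered for that conditional factorization. Likewise the assertion that the overshoot weighting at a phase boundary ``is exactly the ratio $h^{\uparrow}(y)/h^{\uparrow}(x)$, as a consequence of (a)'' is precisely what has to be proved; it does not follow from invariance alone.

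What is missing is the finite-path engine that Bertoin actually uses: either a direct computation of the finite-dimensional laws of $(\Sup,\Sdo)$ from the i.i.d.\ increments via a duality/rearrangement of the steps, or equivalently Lemma \ref{lem:Fellern}(ii) (given $\alpha_n=\ell$, the pre- and post-minimum fragments are independent walks conditioned to stay in the two half-lines for $\ell$ and $n-\ell$ steps) combined with the total-variation limit of Lemma \ref{lem:Fellerinf}(i); the convergence of the conditioned laws to the $h$-transform chains is the only place where your step (a) enters. Two smaller inaccuracies: the missing mass of $p^{\uparrow}(x,\cdot)$ is $\bigl(h^{\uparrow}(x)-\mathbb{E}[h^{\uparrow}(x+X_1);\,x+X_1>0]\bigr)/h^{\uparrow}(x)$, whose numerator equals $\mathbb{P}(\tau^{+}=\infty)$, so the defect depends on $x$ through the division by $h^{\uparrow}(x)$ and is not the plain escape probability you describe (though you are right that it is positive exactly when $S_n\to-\infty$). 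And even in the simple symmetric case the phases are not honest excursions split cleanly between the two chains: the down-step ending a positive excursion has $S_k=0$ and its increment is assigned to $\Sdo$, which is exactly the kind of bookkeeping your sketch defers rather than resolves.
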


\quad The following corollary is an immediate consequence of Theorem \ref{thm:main} and Tanaka's construction of Feller's chains of a random walk (see Proposition \ref{prop:Tanakaa}).

\begin{corollary}
\label{cor:intro}
For a random walk with i.i.d. increments,  let
$\tau^{+}: = \inf\{n > 0: S_n  > 0\}$ and $\tau^{-0}: = \inf\{n > 0: S_n \le 0\}$, with the conventions that 
$S_{\tau^+} = \infty$ if $\tau^{+} = \infty$ and $-S_{\tau^{-0}} = \infty$ if $\tau^{-0} = \infty$. 
The common distribution of the lowest gap $D_{1,n}:= M_{1,n} - M_{0,n}$ and the topmost gap $D_{n,n}:= M_{n,n} - M_{n-1,n}$
converges in total variation norm as $n \to \infty$ to the distribution of $D_1 = W_1$ defined by
\begin{equation}
\label{eq:Wdecomp}
\mathbb{P}( W_1 > w) = \mathbb{P}(S_{\tau^+} >w)  \, \mathbb{P}(- S_{\tau^{-0}} >w) \quad w > 0.
\end{equation}
\end{corollary}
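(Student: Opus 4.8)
The plan is to combine the $K=1$ case of Theorem~\ref{thm:main}, the independence of the two Feller chains from Theorem~\ref{thm:Bertoin93}, and Tanaka's pathwise construction of those chains (Proposition~\ref{prop:Tanakaa}). First I would reduce to computing the law of $W_1$: since $W_{0,n}=0$ by definition, $D_{1,n}=M_{1,n}-M_{0,n}=W_{1,n}$, so the case $K=1$ of Theorem~\ref{thm:main}(i) gives $D_{1,n}\stackrel{TV}{\longrightarrow}W_1=D_1$ as $n\to\infty$. The reversibility of spacings \eqref{eq:revspacings} yields $D_{n,n}\stackrel{d}{=}D_{1,n}$ for every $n$, so the topmost gap has the same total variation limit, and it only remains to show that the law of $W_1$ is the one in \eqref{eq:Wdecomp}.

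Next I would read off $W_1$ from the two Feller chains. By \eqref{eq:Wkrep}, $W_1$ is the second smallest element, counted with multiplicity, of the multiset $\{-\Sdo_n:n\ge0\}\cup\{\Sup_n:n\ge1\}$. For a random walk, $\Sdo$ takes values in $(-\infty,0]$ while $\Sup_n>0$ for $n\ge1$, so every element of this multiset is nonnegative and $-\Sdo_0=0$ is the smallest; deleting this one copy of $0$ gives
\[
W_1=\min\left(\inf_{n\ge1}\Sup_n,\ \inf_{n\ge1}(-\Sdo_n)\right)=:\min(I^{\uparrow},I^{\downarrow}).
\]
By Theorem~\ref{thm:Bertoin93} the chains $\Sup$ and $\Sdo$ are independent, hence so are $I^{\uparrow}$ and $I^{\downarrow}$, and $\mathbb{P}(W_1>w)=\mathbb{P}(I^{\uparrow}>w)\,\mathbb{P}(I^{\downarrow}>w)$ for every $w>0$. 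Thus it suffices to prove $I^{\uparrow}\stackrel{d}{=}S_{\tau^+}$ and $I^{\downarrow}\stackrel{d}{=}-S_{\tau^{-0}}$, with the stated conventions on $\{\tau^+=\infty\}$ and $\{\tau^{-0}=\infty\}$.

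For this last point I would invoke Tanaka's construction (Proposition~\ref{prop:Tanakaa}), which realizes $\Sup$ as the end-to-end concatenation of i.i.d.\ blocks, a generic block being the time-reversed (dual) path $(S_{\tau^+}-S_{\tau^+-j})_{0\le j\le\tau^+}$ of a fresh copy of $S$ stopped at $\tau^+$, a block with $\tau^+=\infty$ terminating the chain. Each such block takes values in $[0,\infty)$, vanishes at time $0$, and has its minimum over times $j\ge1$ equal to the terminal value $S_{\tau^+}$, because the interior of the reversed excursion is strictly positive. Consequently appending later blocks cannot lower the running minimum of $\Sup$ over positive times, which is therefore attained inside the first block and equals that block's ladder height; hence $I^{\uparrow}=\inf_{n\ge1}\Sup_n\stackrel{d}{=}S_{\tau^+}$, the value being $+\infty$ exactly when the first copy has $\tau^+=\infty$, in accordance with the convention. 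The mirror-image argument for $\Sdo$, built from reversed paths of copies of $S$ stopped at $\tau^{-0}$ and taking values in $(-\infty,0]$, gives $\sup_{n\ge1}\Sdo_n\stackrel{d}{=}S_{\tau^{-0}}$, i.e.\ $I^{\downarrow}\stackrel{d}{=}-S_{\tau^{-0}}$. Substituting into the product formula yields \eqref{eq:Wdecomp}.

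The step I expect to be the main obstacle is this last identification: one has to pass from the combinatorial description of the Feller chains used in Theorem~\ref{thm:main} to Tanaka's block construction, and then verify that the relevant one-sided extremum is precisely the first-block ladder height $S_{\tau^+}$ (resp.\ $S_{\tau^{-0}}$) rather than, say, the first value $\Sup_1$ --- quantities that genuinely differ for most increment distributions. That matching is exactly what Proposition~\ref{prop:Tanakaa} provides; granting it, the rest of the corollary is the bookkeeping above, using only the independence and one-sidedness of the blocks.
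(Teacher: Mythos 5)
Your proposal is correct and follows essentially the same route as the paper: reduce via Theorem \ref{thm:main} (and the spacing reversibility \eqref{eq:revspacings}) to the law of $W_1$, identify $W_1=\min\bigl(\min_{n\ge1}\Sup_n,\ \min_{n\ge1}(-\Sdo_n)\bigr)$, factor by the independence of the two Feller chains, and match each one-sided infimum with the ladder height $S_{\tau^+}$ (resp.\ $S_{\tau^{-0}}$) via Tanaka's construction. The only difference is cosmetic: the paper cites Proposition \ref{prop:Tanakaa} directly through the future-minimum/maximum times $\Gup_1,\Gdo_1$, whereas you re-derive that identification from the block structure of the chains.
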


\quad In the case of a random walk, Proposition \ref{prop:main} specializes to the following corollary.

\begin{corollary}
\label{coro:main}
Let $(S_k, \, k \ge 0)$ be a random walk with i.i.d. increments $X_1, X_2, \ldots$, whose common distribution satisfies $\mathbb{P}(X_1 = 0) < 1$.
Let $X: = X_1$ be the generic increment of the random walk.
We have: 
\begin{itemize}
\item [(i)]
Proposition \ref{prop:main} (i) holds.
\item[(ii)]
If $\mathbb{E}|X| < \infty$ and $\mathbb{E} X = \mu$, then for each fixed $k = 1,2, \ldots$
\begin{equation}
\label{eq:limexps2}
\lim_{n \to \infty } \mathbb{E} D_{k,n}  = \mathbb{E} D_k = \frac{\mathbb{E} S_k^+ }{k} + \mu^{-}.
\end{equation}
Assume further that $\mu = 0$ and $\mathbb{E}X^2 = \sigma^2 < \infty$. 
Then
\begin{equation}
\label{eq:dasym2}
\mathbb{E} D_k \sim \frac{ \sigma }{ \sqrt{2 \pi} }   \, k^{-1/2}  \quad \mbox{and} \quad \mathbb{E} W_k \sim \sigma \sqrt{ \frac{ 2}{\pi} } k ^{1/2} \quad \mbox{as } k \to \infty.
\end{equation}
\end{itemize}
\end{corollary}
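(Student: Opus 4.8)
The strategy is to obtain Corollary \ref{coro:main} as a direct specialization of Proposition \ref{prop:main}, the only input being that for a random walk with i.i.d.\ increments the exchangeable $\sigma$-field $\mathcal{E}$ is $\mathbb{P}$-trivial, by the Hewitt--Savage zero--one law. Given this, part (i) is immediate: an i.i.d.\ sequence is in particular exchangeable and satisfies $\mathbb{P}(X_1 = 0) < 1$, so Proposition \ref{prop:main}(i) — including the $p^{th}$-moment convergence clause for every $0 < p < \infty$ with $\mathbb{E}|X|^p < \infty$ — applies verbatim, with $D_k = W_k - W_{k-1}$ and $W_k$ as in \eqref{eq:Wkrep}.

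For part (ii), triviality of $\mathcal{E}$ yields $\mathbb{E}(X \mid \mathcal{E}) = \mathbb{E}X = \mu$ almost surely whenever $\mathbb{E}|X| < \infty$, so $\mathbb{E}[\mathbb{E}(X \mid \mathcal{E})^{-}] = \mu^{-}$ and formula \eqref{eq:limexps} of Proposition \ref{prop:main}(ii) collapses to \eqref{eq:limexps2}. Under the extra hypothesis $\mu = 0$ we have $\mathbb{E}(X \mid \mathcal{E}) = 0$ a.s., so the assumptions of the last part of Proposition \ref{prop:main}(ii) are in force; moreover $\mathbb{E}(X^2 \mid \mathcal{E}) = \mathbb{E}X^2 = \sigma^2$ a.s.\ when $\mathbb{E}X^2 < \infty$, hence $\mathbb{E}\sqrt{\mathbb{E}(X^2 \mid \mathcal{E})} = \sigma$, and \eqref{eq:dasym0} specializes to \eqref{eq:dasym2}.

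There is essentially no obstacle here beyond correctly invoking Hewitt--Savage; the one point worth a sentence is the interchange of limit and expectation hidden in the assertion $\lim_{n \to \infty} \mathbb{E} D_{k,n} = \mathbb{E} D_k$, which is delivered by the moment-convergence statement of Proposition \ref{prop:main}(i) with $p = 1$ (legitimate since $\mathbb{E}|X| < \infty$), so no ad hoc uniform-integrability argument is needed in the i.i.d.\ case. One could alternatively rederive the $k \to \infty$ asymptotics directly from $\mathbb{E} S_k^+ \sim \sigma \sqrt{k/(2\pi)}$, a consequence of the central limit theorem together with uniform integrability of $(S_k/\sqrt{k})^+$, but this merely re-proves a special case of \eqref{eq:dasym0} and is not needed once Proposition \ref{prop:main} is available.
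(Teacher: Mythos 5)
Your proposal is correct and coincides with the paper's own treatment: the paper offers no separate argument for this corollary, simply noting that Proposition \ref{prop:main} ``specializes'' to it in the i.i.d.\ case, which is precisely your reduction via the Hewitt--Savage zero--one law giving $\mathbb{E}(X\mid\mathcal{E}) = \mu$ and $\mathbb{E}(X^2\mid\mathcal{E}) = \sigma^2$ almost surely. Your remark that the interchange $\lim_n \mathbb{E}D_{k,n} = \mathbb{E}D_k$ is already covered by the $p=1$ moment-convergence clause of Proposition \ref{prop:main}(i) is a fair and accurate reading of how the paper intends the limit in \eqref{eq:limexps2} to be justified.
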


\quad As mentioned earlier, Schehr and Majumdar \cite{SM12} showed the existence of the limit \eqref{eq:limexps2} for symmetric distributions of $X$ with a density, with a more complicated description of the limit by a Fourier integral. 
Their result reduces to the simpler formula in \eqref{eq:limexps2} by application of the formula 
\cite{Brown70, Hsu51, vB65} 
\begin{equation*}
\mathbb{E} |X| = \frac{2}{\pi} \int_{0}^\infty ( 1 - \mathbb{E} e^{i t X } ) \frac{ dt } {t^2} 
\end{equation*}
for any symmetric distribution of $X$.

\quad Another interesting question is how fast $(W_{k,n}, \, 1 \le k \le K)$ and $(D_{k,n}, \, 1 \le k \le K)$ converges as $n \rightarrow \infty$ to the limiting processes $(W_{k}, \, 1 \le k \le K)$ and $(D_{k}, \, 1 \le k \le n)$ respectively.
The following proposition quantifies a convergence rate in \eqref{tvconv} and \eqref{eq:tvconvD} for a random walk with i.i.d. increments which is continuous, and satisfies some moment condition.
We leave the question of whether the established rate is optimal as open problem.

\begin{proposition}
\label{prop:mainrate}
Let $(S_k, \, k \ge 0)$ be a random walk with i.i.d. increments $X_1, X_2, \ldots$, whose distribution is continuous and $\mathbb{E} |X_1|^2 < \infty$.
With notations as in Theorem \ref{thm:main} and Proposition \ref{prop:main}, let $\mbox{TV}(K,n)$ be the total variation distance between the distributions of the two sequences in \eqref{tvconv} or \eqref{eq:tvconvD}.
Then for each finite K, there is a constant $C_K > 0$ such that
\begin{equation}
\label{eq:boundn}
\mbox{TV}(K, n) \le C_K \,   n^{-\frac{1}{4}}.
\end{equation}
\end{proposition}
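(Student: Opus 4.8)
The plan is to make quantitative the coupling behind Theorem~\ref{thm:main}. For each $K$ the map $(w_1,\dots,w_K)\mapsto(w_1,w_2-w_1,\dots,w_K-w_{K-1})$ is a linear bijection, so the two sequences in \eqref{eq:tvconvD} are deterministic invertible images of those in \eqref{tvconv} and the corresponding total variation distances agree; I write $\mathrm{TV}(K,n)$ for the common value and bound it for the $W$-sequences. I would treat the centred case $\mathbb{E}X_1=0$ in detail; when $\mathbb{E}X_1\ne0$ the Feller chains grow linearly rather than diffusively, the error terms below become exponentially small, and the argument only simplifies. Continuity of the increment law is used throughout: it makes the minimum of each walk a.s.\ unique, the values feeding the order statistics a.s.\ distinct, and a local limit theorem available. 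The structural input is that the proof of Theorem~\ref{thm:main}, through Bertoin's description (Theorem~\ref{thm:Bertoin93}) and Tanaka's construction (Proposition~\ref{prop:Tanakaa}), realises on one probability space the two independent infinite Feller chains $\Sup$ and $\Sdo$ together with the finite walks $(S_k,\,0\le k\le n)$, $n\ge1$, so that $(W_k)_{k\ge1}$ are the order statistics of $\mathcal{A}:=\{-\Sdo_j:\,j\ge0\}\cup\{\Sup_j:\,j\ge1\}$, while for each $n$ the $(W_{k,n})_{k\ge1}$ are the order statistics of the subset $\mathcal{A}_n:=\{-\Sdo_j:\,0\le j\le\alpha_n\}\cup\{\Sup_j:\,1\le j\le n-\alpha_n\}\subseteq\mathcal{A}$, where $\alpha_n:=\max\{k\le n:S_k=M_{0,n}\}$; concretely, after shifting by $M_{0,n}$, the time-reversed pre-minimum path is an initial segment of $-\Sdo$ and the post-minimum path an initial segment of $\Sup$.

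Next, set $N^{\uparrow}_K:=\sup\{j\ge1:\Sup_j\le W_K\}$ (with $\sup\emptyset:=0$) and $N^{\downarrow}_K:=\sup\{j\ge0:-\Sdo_j\le W_K\}$. Both are a.s.\ finite, because $\Sup_j\to+\infty$, $-\Sdo_j\to+\infty$ and $W_K<\infty$. On $\{\alpha_n\ge N^{\downarrow}_K\}\cap\{n-\alpha_n\ge N^{\uparrow}_K\}$ the set $\mathcal{A}_n$ contains every element of $\mathcal{A}$ that is $\le W_K$, while $\mathcal{A}_n\subseteq\mathcal{A}$; since by continuity exactly the $K+1$ values $0=W_0<W_1<\dots<W_K$ of $\mathcal{A}$ lie at or below $W_K$ and all of them belong to $\mathcal{A}_n$, the $(K+1)$ smallest elements of $\mathcal{A}_n$ and of $\mathcal{A}$ coincide, i.e.\ $(W_{k,n})_{0\le k\le K}=(W_k)_{0\le k\le K}$. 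Hence, using $\{\alpha_n<N^{\downarrow}_K\}\subseteq\{\alpha_n<m\}\cup\{N^{\downarrow}_K>m\}$ and similarly with $\uparrow$, we obtain for every integer $m\ge1$
\begin{equation*}
\mathrm{TV}(K,n)\ \le\ \mathbb{P}(\alpha_n<m)+\mathbb{P}(n-\alpha_n<m)+\mathbb{P}(N^{\uparrow}_K>m)+\mathbb{P}(N^{\downarrow}_K>m).
\end{equation*}

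It remains to estimate the four terms. For the first two I would use the Sparre--Andersen factorisation $\mathbb{P}(\alpha_n=j)=\mathbb{P}(S_1<0,\dots,S_j<0)\,\mathbb{P}(S_1>0,\dots,S_{n-j}>0)$ (valid for continuous increments), the normalisation $\sum_{j=0}^n\mathbb{P}(\alpha_n=j)=1$, and the persistence bound $\mathbb{P}(S_1>0,\dots,S_\ell>0)\vee\mathbb{P}(S_1<0,\dots,S_\ell<0)\le C\ell^{-1/2}$ (valid for centred walks with $\mathbb{E}X_1^2<\infty$), which together give $\mathbb{P}(\alpha_n<m)+\mathbb{P}(n-\alpha_n<m)\le C\sqrt{m/n}$ for $m\le n/2$. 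For $\mathbb{P}(N^{\uparrow}_K>m)$ (the downward case being symmetric): the positive values of $-\Sdo$ forming a subset of those of $\mathcal{A}$, one has $W_K\le(-\Sdo)_{(K)}$, the $K$-th smallest positive value of the chain $-\Sdo$, which by Theorem~\ref{thm:Bertoin93} is independent of $\Sup$ and, as $\mathbb{E}|X_1|<\infty$, has finite mean. Thus $\mathbb{P}(N^{\uparrow}_K>m)=\mathbb{P}(\exists\,j>m:\Sup_j\le W_K)\le\mathbb{P}(\exists\,j>m:\Sup_j\le(-\Sdo)_{(K)})$; conditioning on $(-\Sdo)_{(K)}$ and applying the Markov property of $\Sup$ at time $m$, this equals $\mathbb{E}[g(\Sup_m,(-\Sdo)_{(K)})]$ with $g(y,\ell):=\mathbb{P}^{\uparrow}_y(\inf_{j\ge1}\Sup_j\le\ell)$. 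Combining the harmonic-function estimate $g(y,\ell)\le C\,h^{\uparrow}(\ell)/h^{\uparrow}(y)$ for $0<\ell\le y$ with the linear growth $h^{\uparrow}(x)\asymp 1+x$ (true since the ascending ladder height of $S$ has finite mean when $\mathbb{E}X_1^2<\infty$) and the local-limit estimate $\mathbb{E}[1/\Sup_m]\le Cm^{-1/2}$ for large $m$ (a local limit theorem for the non-lattice walk $S$ conditioned to stay positive), one gets $\mathbb{P}(N^{\uparrow}_K>m)\le C\,\mathbb{E}[1+(-\Sdo)_{(K)}]\,m^{-1/2}=C_K m^{-1/2}$ for large $m$, hence for all $m$ after absorbing small $m$ trivially. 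Feeding these into the display above and choosing $m=\lceil\sqrt{n}\rceil$ gives $\mathrm{TV}(K,n)\le C\sqrt{m/n}+C_K m^{-1/2}\le C_K' n^{-1/4}$.

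The step I expect to be the main obstacle is the first one: extracting from the proof of Theorem~\ref{thm:main} a coupling in which $(W_{k,n})$ really is the order statistics of \emph{initial segments}, of lengths $\alpha_n$ and $n-\alpha_n$, of the \emph{same} pair of infinite Feller chains producing $(W_k)$. The global Feller decomposition (by the sign of $S_k$) does not achieve this, because after time $\alpha_n$ the walk may still contribute increments classified into $\Sdo$; one must work with the pre-/post-minimum splitting and verify, via the fluctuation identities behind Bertoin's and Tanaka's constructions, that the reversed shifted pre-minimum path and the shifted post-minimum path are honest initial segments of $-\Sdo$ and $\Sup$ which can moreover be realised simultaneously as prefixes of the two infinite chains. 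Once that is in place, the exponent $\tfrac14$ is forced by the balance $m\asymp\sqrt{n}$ between the ``too little of the walk has been revealed'' error $\asymp\sqrt{m/n}$ and the ``the Feller chains have not yet risen above $W_K$'' error $\asymp m^{-1/2}$; whether a sharper treatment of either error improves $\tfrac14$ is the open problem mentioned in the statement.
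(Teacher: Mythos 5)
Your strategy is essentially sound and, in its core mechanics, runs parallel to the paper's proof: both start from the coupling bound extracted from the proof of Theorem \ref{thm:main} (TV is at most the probability that some Feller-chain value revealed after time $n$ falls at or below $W_K$), both exploit the independence of $\Sup$ and $\Sdo$ (Theorem \ref{thm:Bertoin93}) after replacing $W_K$ by the $K$-th order statistic $W^{\downarrow}_K$ of the opposite chain, and both control later dips of $\Sup$ below a level $x$ through the hitting formula $\bigl(h^{\uparrow}(y)-h^{\uparrow}(y-x)\bigr)/h^{\uparrow}(y)\le h^{\uparrow}(x)/h^{\uparrow}(y)$ together with the linear growth of the renewal function $h^{\uparrow}$. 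Where you genuinely diverge is the intermediate decomposition: the paper cuts at a \emph{height} $A(n)=n^{1/4}$, bounding $\mathbb{P}(\Sup_{N^{+}_n}\le A(n))=\mathbb{P}(\max_{k\le n}S_k\le A(n))\lesssim n^{-1/2}(A(n)+\log n)$ via the KMT embedding and a reflection bound, and then uses the strong Markov property at the first passage of $\Sup$ above $A(n)$; you cut at a \emph{time} $m\asymp\sqrt n$, bounding $\mathbb{P}(\alpha_n<m)+\mathbb{P}(n-\alpha_n<m)\lesssim\sqrt{m/n}$ by the Sparre--Andersen factorization and persistence probabilities, and then use the Markov property of $\Sup$ at the deterministic time $m$. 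Both tradeoffs give $n^{-1/4}$. Two remarks in your favour: your route avoids KMT (whose logarithmic rate strictly needs more than second moments), and the estimate you flag as requiring a local limit theorem is in fact immediate, since $\mathbb{P}(\Sup_m\in dy)=h^{\uparrow}(y)\,\mathbb{P}(S_m\in dy,\ \tau^{-0}>m)$ gives exactly $\mathbb{E}\bigl[1/h^{\uparrow}(\Sup_m)\bigr]=\mathbb{P}(\tau^{-0}>m)\lesssim m^{-1/2}$ in the centred case. Also, the ``main obstacle'' you single out is already resolved inside the proof of Theorem \ref{thm:main}: one never needs the pre-/post-minimum pieces of $S$ itself to be prefixes of its own Feller chains; the reconstructed walk $\widetilde S_{\cdot,n}$ built there has, for all $n$ simultaneously, order statistics equal to those of the chain prefixes of lengths $N^{-}_n$ and $N^{+}_n$, and since total variation depends only on laws this is precisely the coupling your argument needs (read your $\alpha_n$, $n-\alpha_n$ as $N^{-}_n$, $N^{+}_n$).

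The one genuine gap is the non-centred case, which the statement does not exclude. If $\mathbb{E}X_1=\mu>0$, then $\alpha_n$ converges a.s.\ to a finite limit, so $\mathbb{P}(\alpha_n<m)$ with $m\to\infty$ does \emph{not} vanish and your master inequality is vacuous as written; the correct statement on that side is that the downward chain is a.s.\ finite and eventually fully revealed, which needs a separate (easy, but structurally different) argument. Moreover your assertion that the resulting errors are ``exponentially small'' requires exponential moments, which are not assumed; under $\mathbb{E}X^2<\infty$ one only gets polynomial bounds, e.g.\ $\mathbb{P}(\exists\,k>n:\ S_k\le0)=O(1/n)$ by a one-big-jump estimate -- still ample for $n^{-1/4}$, but it must be argued. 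Similarly, finiteness of $\mathbb{E}[W^{\downarrow}_K]$ (equivalently of $\mathbb{E}\,h^{\uparrow}(W^{\downarrow}_K)$) does not follow from $\mathbb{E}|X_1|<\infty$ alone, since the increments of $\Sdo$ are $h^{\downarrow}$-tilted; under the standing hypothesis $\mathbb{E}X_1^2<\infty$ it does, as in the paper, and that is what you should invoke. With these repairs your argument delivers the stated $n^{-1/4}$ bound.
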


\quad It is well known that in the setting of Corollary \ref{cor:intro}, the distributions $F_+$ and $F_{-0}$ of
the strict ascending and weak descending ladder heights $S_{\tau^{+}}$ and $S_{\tau^{-0}}$ with supports in 
$(0,\infty)$ and $(-\infty,0]$ respectively,
are uniquely determined by the Wiener-Hopf equation $F_+ + F_{-0} - F_+ * F_{-0} = F$,
where $F$ is the distribution of increments.
The joint limit law of  $(W_1,W_2)$ can also be described just as explicitly for a general increment distribution, and in principle so can that of
$(W_1,W_2,W_3)$ and so on, but these general descriptions become more complex as the number of variables increases.
Three special cases for which much more detailed descriptions are possible are:
\begin{itemize}[itemsep = 3 pt]
\item[$(i)$]
simple random walk, with increments $X_i = \pm 1$ and $\mathbb{P}(X_i = 1) = \mathbb{P}(X_i = -1) = 1/2$; 
\item[$(ii)$]
symmetric Gaussian random walk, whose increments have the common distribution $\mathbb{P}(X_i \in dx) = (2 \pi \sigma^2)^{-\frac{1}{2}} \exp\left(-\frac{x^2}{2 \sigma^2} \right) dx$ for some $\sigma^2 > 0$;
\item[$(iii)$]
symmetric Laplacian random walk, whose increments have the common distribution $\mathbb{P}(X_i \in dx) = (2b)^{-1} e^{-b|x|} dx$ for some $b > 0$.
\end{itemize}

\quad In this paper we will illustrate results for walks with simple $\pm 1$ and symmetric Gaussian increments.
The case of symmetric Laplacian increments involves further symmetries and will be treated
in a separate paper \cite{Pitman20}.
The key to the analysis of these random walks is to embed them into a Brownian motion.
By the Skorokhod embedding \cite{Sk65}, every random walk with mean zero and finite variance $\sigma^2$ per step may be embedded in a Brownian motion $(B(t), \, t \ge 0 )$ as $S_k = B(T_k)$
where $0 = T_0 \le T_1 \le \cdots$ is an increasing sequence of stopping times of $B$, with the $(T_k- T_{k-1}, \, k \ge 1)$ i.i.d. as $T_1$, and $\mathbb{E}T_k = k \sigma^2$. 
In the above three examples, 
\begin{itemize}[itemsep = 3 pt]
\item[$(i)$]
Simple symmetric walk, with $T_k = \inf\{t > T_{k-1}: |B_t - B_{T_{k-1}}| = 1\}$; 
\item[$(ii)$]
symmetric Gaussian random walk, with $T_k = \sigma k$;
\item[$(iii)$]
symmetric Laplacian random walk, with $T_k = 2b^{-2} \gamma_k$, where $\gamma_k = \sum_{i = 1}^{k} \varepsilon_i$ for a sequence of independent standard exponential variables.
\end{itemize}

The upward Feller chain $(\Sup_k, \, k \ge 0)$ derived from a simple symmetric walk was studied by Pitman \cite{Pitman75} 
who gave a number of constructions of this Markov chain, including the embedding
$\Sup_k = R_3(T_k)$, $k \ge 0$ 
where $R_3$ is a BES(3) process, and $T_k$ is the succession of random times at which $R_3$ hits integer values, defined inductively by $T_0 := 0$ and $T_{k+1}:= \inf \{t > T_{k}:  | R_3(t) -  R_3(T_k)| = 1 \}$.
It follows from this embedding, and the easily established almost sure convergence $T_n/n \to 1$ as $n \to \infty$, that there is the scaling limit
\begin{equation}
( n^{-1/2} \Sup_{un}, \, u \ge 0 ) \stackrel{d}{\longrightarrow} (R_3(u), \, u \ge 0 )
\end{equation}
in the sense of functional limit theorems in the path space $C[0,\infty)$, that is the same sense in which $( n^{-1/2} S_{ u n}, \, u \ge 0 )$ converges in distribution to Brownian motion. 
See \cite{HKK03} for embedding the Feller chain $(\Sup_k, \, k \ge 0)$ derived from a random walk with a more general increment distribution in a BES(3) process.

\smallskip
{\bf Organization of the paper:} 
In Section \ref{sc2}, we recall basic constructions of the Feller chains.
In Section \ref{sc3}, we study the extreme order statistics derived from general random walks.
There Theorem \ref{thm:main}, Proposition \ref{prop:main}, Corollary \ref{cor:intro} and Proposition \ref{prop:mainrate} are proved.
Section \ref{sc4} provides detailed descriptions of the limit process of order statistics derived from simple symmetric random walks. 
Section \ref{sc5} describes the limit process of order statistics derived from Gaussian random walks. 

\section{Feller chains -- Old \& New}
\label{sc2}

\quad In this section we present some basic constructions in the theory of conditioned random walks.
This theory was systematically developed in the 1980's and 1990's by Tanaka \cite{Tanaka89}, Bertoin and Doney \cite{Bertoin93, BD94} following earlier contributions by Feller \cite{Feller2}, Williams \cite{Williams70, Williams} and Pitman \cite{Pitman75}.
Conditioned random walks also appear in the study of localization of random polymers, see e.g. Comets \cite[Chapter 7]{Comets}.

\quad Let $S_0 = 0$ and $S_n:= X_1 + \cdots + X_n$ be a real-valued walk with exchangeable increments $X_1, X_2, \ldots$.
Let $(\Sup_k, \, k \ge 0)$ and $(\Sdo_k, \, k \ge 0)$ be the upward and downward Feller chains derived from the walk $(S_k, \, k \ge 0 )$ defined in the introduction.
These random sequences with a finite time horizon $n$ were introduced by Feller \cite[XII.8, Lemma 3]{Feller2} to provide a combinatorial proof of Sparre Andersen's equivalence principle \cite{SparreA}.
Formally, the upward Feller chain $S^{\uparrow}$ is the sequence of partial sums of those increments $X_k$ of the walk $S$ with $S_k > 0$, and the downward Feller chain $S^{\uparrow}$ is the sequence of partial sums of those increments $X_k$ of the walk $S$ with $S_k \le 0$
Let $N_n^{+}: = \#\{k \le n: S_k > 0\}$ and $N_n^{-}: = n - N_n^{+}$.
The above construction gives partial sum processes $(\Sup_k, \, 0 \le k \le N^{+}_n)$ and $(\Sdo_k, \, 0 \le k \le N^{-}_n)$ of random lengths $N^{+}_n$ and $N^{-}_n$ respectively.
Then there are identities
\begin{equation}
\label{eq:krecovery}
N_k^{+} = N_{k-1}^{+} + 1(S_k > 0), \quad N_k^{-} = N_{k-1}^{-} + 1(S_k \le 0), \quad S_k = \Sup_{N_k^{+}} + \Sdo_{N_k^{-}}.
\end{equation}
It follows that for each fixed $n$, the original sequence $(S_k, \, 0 \le k \le n )$ is encoded as a measurable function of the two Feller chains $(\Sup_k, \, 1 \le k \le N^{+}_n)$ and $(\Sdo_k, \, 1 \le k \le N^{-}_n)$.
The final values of the two chains give the value of $S_n = \Sup_{N_n^{+}} + \Sdo_{N_n^{-}}$.
This also yields the value of $1(S_n >0)$, hence the values of $N^{+}_{n-1}$ and $N^{-}_{n-1}$ by \eqref{eq:krecovery}
for $k = n$, then the value of $S_{n-1}$ by \eqref{eq:krecovery} for $k = n-1$, and so on recursively down to the value
of $S_1$.
\begin{lemma} \cite{Feller2}
\label{lem:Fellern}
Assume that $(S_k, \, 0 \le k \le n)$ is a walk with exchangeable increments, and 
let $\alpha_n : = \max \left\{k \le n: S_k =\underline{S}_n \right\}$.
Whatever the common distribution of exchangeable increments of $S$, there is the identity in distribution
\begin{multline}
\label{eq:FBid}
\left( (S_{\alpha_n + k} -S_{\alpha_n}, \, 0 \le k \le n - \alpha_n), \,  (S_{\alpha_n - k} - S_{\alpha_n}, \, 0 \le k \le \alpha_n) \right) \\
\stackrel{d}{=} \left((\Sup_k, \, 0 \le k \le N^{+}_n), \,  (-\Sdo_k, \, 0 \le k \le N^{-}_n) \right).
\end{multline}
Assume further that the increments $X_1, X_2, \ldots$ are independent.
\begin{itemize}[itemsep = 3 pt]
\item[(i)] 
The common distribution of $\alpha_n$ and $N^{-}_n$ is given by
\begin{equation}
\mathbb{P}(\alpha_n = \ell) = \mathbb{P}(N^{-}_n = \ell) = \mathbb{P}\left(\cap_{k = 1}^\ell \{S_k \le 0\}\right) 
\mathbb{P}\left(\cap_{k = 1}^{n -\ell} \{S_k > 0\}\right).
\end{equation}
\item[(ii)]
Given $\alpha_n = \ell$, the common distribution of each pair of path fragments displayed in \eqref{eq:FBid} is that of two independent path fragments distributed as $(S_k, \, 0 \le k \le n -\ell)$ given $\cap_{k = 1}^{n -\ell} \{S_k > 0\}$
and $(-S_k, \, 0 \le k \le \ell)$ given $\cap_{k = 1}^{\ell} \{S_k \le 0\}$ respectively.
\end{itemize}
\end{lemma}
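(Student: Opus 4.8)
The plan is to prove the distributional identity \eqref{eq:FBid} by a purely combinatorial bijection, and then to extract (i) and (ii) from it using the extra structure available when the increments are i.i.d. For \eqref{eq:FBid}, note first that a finite exchangeable vector $(X_1,\dots,X_n)$ is, conditionally on its multiset of values $C$, uniform over the distinct rearrangements of $C$; so it suffices to prove \eqref{eq:FBid} conditionally on $C$ and then average over $C$. Fix $C$, identify a rearrangement of $C$ with the corresponding walk path, let $\mathcal A(C)$ be the finite set of these paths, and let $\mathcal T(C)$ be the set of pairs $(u,w)$ of finite real sequences with all partial sums of $u$ strictly positive and all partial sums of $w$ nonpositive, the values of $u$ and $w$ together forming $C$. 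Introduce two maps $\mathcal A(C)\to\mathcal T(C)$: the pre/post-minimum map $\Phi$, sending a path to $\big((X_{\alpha_n+1},\dots,X_n),(X_{\alpha_n},X_{\alpha_n-1},\dots,X_1)\big)$, and the Feller map $\Psi$, sending a path to (increments of $\Sup$, increments of $\Sdo$), read in their natural order.

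That $\Phi$ lands in $\mathcal T(C)$ holds because $\alpha_n$ is the \emph{last} time of the minimum (so $S_{\alpha_n+j}-S_{\alpha_n}>0$ for $j\ge1$), while $S_j\ge S_{\alpha_n}=\underline{S}_n$ for all $j$ makes the partial sums $S_{\alpha_n}-S_{\alpha_n-j}$ of the reversed prefix nonpositive. Moreover $\Phi$ is inverted by the map sending $(u,w)$ to the walk whose increment sequence is the reverse of $w$ followed by $u$: this concatenation attains its minimum uniquely at index $|w|$ (for smaller indices by nonpositivity of the partial sums of $w$, for larger ones by positivity of those of $u$), so applying $\Phi$ returns $(u,w)$. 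Hence $\Phi$ is a bijection and $|\mathcal A(C)|=|\mathcal T(C)|$. On the other side, the backward recursion recalled just before the lemma — read off $S_n=\Sup_{N^+_n}+\Sdo_{N^-_n}$, hence $1(S_n>0)$, hence the chain the last increment belongs to, peel it off, recurse — makes sense for every $(u,w)\in\mathcal T(C)$ (it never gets stuck: if $u$ is empty the running endpoint equals the total of $w$, which is $\le0$, and symmetrically), and one checks that it inverts $\Psi$; with $|\mathcal A(C)|=|\mathcal T(C)|$ this shows $\Psi$ is a bijection $\mathcal A(C)\to\mathcal T(C)$ as well — in particular $\Sup$ has strictly positive, and $\Sdo$ nonpositive, partial sums, which can also be seen directly from \eqref{eq:krecovery} by induction on $n$. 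Since $\Phi$ and $\Psi$ are both bijections onto $\mathcal T(C)$, they push the uniform law on $\mathcal A(C)$ to the same uniform law on $\mathcal T(C)$; rewriting a pair $(u,w)$ as the partial-sum path of $u$ together with minus the partial-sum path of $w$ turns this statement into \eqref{eq:FBid}.

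For (i) and (ii), take the increments i.i.d. Write $\{\alpha_n=\ell\}=\{S_j\ge S_\ell,\ 0\le j\le\ell\}\cap\{S_j>S_\ell,\ \ell<j\le n\}$; since $S_\ell-S_{\ell-i}=X_\ell+\cdots+X_{\ell-i+1}$ and $S_{\ell+i}-S_\ell=X_{\ell+1}+\cdots+X_{\ell+i}$, this is the intersection of an event in $(X_1,\dots,X_\ell)$ with an event in $(X_{\ell+1},\dots,X_n)$, the two blocks being independent. By increment-reversal symmetry of the i.i.d.\ walk, $(S_\ell-S_{\ell-i},\,0\le i\le\ell)\stackrel{d}{=}(S_i,\,0\le i\le\ell)$, so the first event has probability $\mathbb{P}(\cap_{k=1}^{\ell}\{S_k\le0\})$ and the second, similarly, has probability $\mathbb{P}(\cap_{k=1}^{n-\ell}\{S_k>0\})$, which gives the formula for $\mathbb{P}(\alpha_n=\ell)$. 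Applying to both sides of \eqref{eq:FBid} the functional ``number of steps of the second path'', which equals $\alpha_n$ on the left and $N^-_n$ on the right, yields $\alpha_n\stackrel{d}{=}N^-_n$, completing (i). For (ii): conditionally on $\{\alpha_n=\ell\}$ the post-minimum fragment depends only on $X_{\ell+1},\dots,X_n$ and the reversed pre-minimum fragment only on $X_1,\dots,X_\ell$, and $\{\alpha_n=\ell\}$ splits over these independent blocks, so the two fragments are conditionally independent; the first is $(S_k,0\le k\le n-\ell)$ given $\cap_{k=1}^{n-\ell}\{S_k>0\}$, and, applying the reversal identity jointly with its conditioning event, the second is $(-S_k,0\le k\le\ell)$ given $\cap_{k=1}^{\ell}\{S_k\le0\}$. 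By \eqref{eq:FBid} together with $\alpha_n\stackrel{d}{=}N^-_n$, the right-hand pair of \eqref{eq:FBid} conditioned on $\{N^-_n=\ell\}$ has the same law, which is assertion (ii).

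The main obstacle is the bijective argument for \eqref{eq:FBid}: one must identify the correct common target $\mathcal T(C)$, keep careful track of the prefix reversal and the sign conventions so that ``reverse of $w$, then $u$'' genuinely inverts the pre/post-minimum decomposition, and use the cardinality count forced by $\Phi$ to conclude that the Feller map not only embeds into but exhausts $\mathcal T(C)$. The remainder, parts (i) and (ii), is routine bookkeeping with independent increment blocks and the increment-reversal symmetry of i.i.d.\ walks.
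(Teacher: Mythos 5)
Your proof is correct: conditioning on the increment multiset, the two bijections $\Phi$ and $\Psi$ onto the common set of pairs with strictly positive, respectively nonpositive, partial sums (with the reverse-induction recovery map supplying the inverse of $\Psi$ and the ``never gets stuck'' check handling the boundary cases), and the block-independence plus increment-reversal arguments for (i) and (ii) all hold up. The paper gives no proof of this lemma --- it is quoted from Feller --- and your argument is essentially Feller's classical combinatorial bijection, i.e.\ the same approach the paper implicitly relies on, so there is nothing further to reconcile.
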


\quad Here the main results involve Feller's chains with infinite time horizon.
Observe that the first $n$ steps of the walk $(S_k, \, 0 \le k \le n)$ define only the first $N^{+}_n$ steps of $\Sup$ and the first $N^{-}_n$ steps of $\Sdo$.
As $n \uparrow \infty$, it is obvious that $N^{+}_n \uparrow N^{+}_{\infty}$ and $N^{-}_n \uparrow N^{-}_{\infty}$ for some limiting random variables $N^{+}_{\infty}$ and $N^{-}_{\infty}$ with values in $\{0,1, \ldots, \infty\}$.
When $N^{+}_{\infty} = \infty$ there are infinitely many $X_k$ with $S_k > 0$, and the infinite horizon upward chain $\Sup$ has this infinite list of $X$-values as increments.
Whereas if $N^{+}_{\infty} < \infty$, there is the convention that $\Sup_k = \infty$ for $k > N^{+}_{\infty}$.
So the random variable $N^{+}_{\infty}$ is encoded in $\Sup$ as $N^{+}_{\infty} = \inf\{n: \Sup_{n+1} = \infty\}$ with the convention that $\inf \emptyset = \infty$.
Similar remarks apply to the definition of $\Sdo$.
The following lemma summarizes some basic facts about Feller's chains.

\begin{lemma} \cite{Bertoin93, Feller2}
\label{lem:Fellerinf}
Assume that $(S_k, \, k \ge 0)$ is a real-valued walk with exchangeable increments $X_1, X_2, \ldots$, 
whose common distribution satisfies $\mathbb{P}(X_1 = 0) < 1$.
\begin{itemize}[itemsep = 3 pt]
\item[(i)] There is the convergence in total variation as $n \rightarrow \infty$
\begin{equation}
\label{eq:Fellertvn}
\left( (S_{\alpha_n + k} -S_{\alpha_n}, \, 0 \le k \le n - \alpha_n), \,  (S_{\alpha_n - k} - S_{\alpha_n}, \, 0 \le k \le \alpha_n) \right) \stackrel{TV}{\longrightarrow} \left(\Sup, -\Sdo \right),
\end{equation}
where $\alpha_n : = \max \left\{k \le n: S_k = \underline{S}_n \right\}$.
\item[(ii)] Each Feller chain is upwardly transient:
\begin{equation*}
\lim_{n\to \infty} \Sup_n = \lim_{n\to \infty} - \Sdo_n = \infty \quad \mbox{almost surely}.
\end{equation*}
\end{itemize}
Assume further that the increments $X_1, X_2, \ldots$ are independent.
We have the following:
\begin{itemize}[itemsep = 3 pt]
\item[(iii)] 
The two Feller chains $\Sup$ and $\Sdo$ are independent. 
\item[(iv)]
Each of the Feller chains $\Sup$ and $-\Sdo$ is a Markov chain with stationary transition probabilities, with state space $[0,\infty]$ and $\infty$ as an absorbing state.
\item[(v)]
The chain $\Sup$ is a Doob $h$-transform of the original random walk $S$ stopped on first reaching $(-\infty,0]$, for the super-harmonic function $h^{\uparrow}(x): = \mathbb{E}\left(\sum_{k = 0}^{\tau^{+} - 1}1(S_k > -x) \right)$ 
where $\tau^{+}: = \inf\{n > 0: S_n > 0\}$;
while the chain $\Sdo$ is a Doob $h$-transform of the original random walk $S$ stopped on first reaching $(0, \infty)$,
for the super-harmonic function $h^{\downarrow}(x): = \mathbb{E}\left(\sum_{k = 0}^{\tau^{-0} - 1}1(S_k  \le -x) \right)$
where $\tau^{-0}: = \inf\{n > 0: S_n \le 0\}$.
\end{itemize}
\end{lemma}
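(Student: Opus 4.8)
The plan is to deduce~(i) from Feller's finite-horizon identity (Lemma~\ref{lem:Fellern}), to obtain~(ii) by reducing to the i.i.d.\ case via de Finetti, and to recall~(iii)--(v) from Bertoin~\cite{Bertoin93} (equivalently Theorem~\ref{thm:Bertoin93}) while indicating how they emerge from the finite-horizon combinatorics. Throughout, the ``$TV$'' convergence in~(i) is to be read as convergence in total variation of finite-dimensional distributions, i.e.\ of the length-$(m+1)$ initial segments of the two pairs of sequences for each fixed $m$.

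For~(i), I would start from the identity in law of Lemma~\ref{lem:Fellern}, in which the two Feller chains on the right are built from a single realisation of the walk and have (random) lengths $N^{+}_n = n - \alpha_n$ and $N^{-}_n = \alpha_n$. The point is that, for this coupled construction, the right-hand finite chains are literally the initial segments of length $N^{+}_n+1$ and $N^{-}_n+1$ of the infinite-horizon chains $\Sup$ and $-\Sdo$. Hence, for a fixed horizon $m$, on the event $\{N^{+}_n \ge m,\, N^{-}_n \ge m\}$ the restriction to level $m$ of the right-hand side of the identity equals $\big((\Sup_k,\,0\le k\le m),\,(-\Sdo_k,\,0\le k\le m)\big)$. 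Combining this with the identity in law, the total variation distance between the law of the length-$(m+1)$ initial segment of the left-hand side and the law of $\big((\Sup_k,\,0\le k\le m),\,(-\Sdo_k,\,0\le k\le m)\big)$ is at most $\mathbb{P}(N^{+}_n<m) + \mathbb{P}(N^{-}_n<m)$, which by~(ii) (giving $N^{+}_n\uparrow\infty$ and $N^{-}_n\uparrow\infty$ a.s.) tends to $0$ as $n\to\infty$. Since $m$ is arbitrary this yields~(i); this step is routine once~(ii) is available.

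For~(ii), I would invoke de Finetti's theorem: conditionally on the exchangeable $\sigma$-field $\mathcal{E}$ the increments $X_1, X_2, \ldots$ are i.i.d., and both the events $\{N^{+}_\infty=\infty\}$, $\{N^{-}_\infty=\infty\}$ and the chains $\Sup$, $\Sdo$ are measurable functions of the walk, so it suffices to treat the i.i.d.\ walk with $\mathbb{P}(X_1=0)<1$. If the walk drifts to $\pm\infty$, one of $N^{+}_\infty, N^{-}_\infty$ is finite, the corresponding chain is absorbed at $\infty$ (hence ``tends to $\infty$'' by the stated convention), and the other chain eventually agrees with $\pm S_n$ up to an additive constant and so tends to $+\infty$. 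The substantive case is the oscillating walk, where $N^{+}_\infty=N^{-}_\infty=\infty$ and one must show $\Sup_n\to\infty$ and $-\Sdo_n\to\infty$ a.s.: this is the classical upward transience of a random walk conditioned to stay strictly positive (respectively weakly negative), which I would quote from \cite{Bertoin93, BD94} or derive from the Doob $h$-transform description in~(v) using that $h^{\uparrow}$ and $h^{\downarrow}$ are unbounded.

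For~(iii)--(v) (the i.i.d.\ case) I would recall Bertoin's theorem rather than reprove it: by Lemma~\ref{lem:Fellern}(ii), given $\alpha_n=\ell$, the post-minimum and (reversed) pre-minimum fragments are independent copies of $(S_k,\,0\le k\le n-\ell)$ conditioned on $\cap_{k=1}^{\,n-\ell}\{S_k>0\}$ and of $(-S_k,\,0\le k\le\ell)$ conditioned on $\cap_{k=1}^{\,\ell}\{S_k\le 0\}$; passing to the limit through~(i) yields the independence of $\Sup$ and $\Sdo$ and the Markov property with $\infty$ absorbing, while the identification of the transition kernels as $h^{\uparrow}$- and $h^{\downarrow}$-transforms follows from the renewal / Sparre~Andersen asymptotics of the probabilities $\mathbb{P}_x\big(\cap_{k=1}^{N}\{S_k>0\}\big)$ as $N\to\infty$. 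I expect the main obstacle to be precisely this last identification of the harmonic functions, together with the oscillating-case transience that feeds into it: everything else is bookkeeping around Feller's combinatorial lemma, whereas these two facts are the genuine analytic input, which is why the lemma is attributed to \cite{Feller2, Bertoin93}.
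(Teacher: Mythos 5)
Your overall route matches what the paper intends: the paper gives no proof of this lemma at all, quoting \cite{Feller2, Bertoin93} for (ii)--(v) and remarking only that the total variation form of (i), though not stated in those references, is implicit in the coupling used later in the proof of Theorem \ref{thm:main}. Your derivation of (i) from the finite-horizon identity \eqref{eq:FBid}, via the observation that the right-hand fragments are literally initial segments of the infinite-horizon chains built from the same walk, is exactly that coupling, and quoting \cite{Bertoin93} for (iii)--(v) and for the transience of the conditioned walks in the oscillating case is what the paper does as well.

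One step in your (i) would, however, fail as written. You bound the total variation distance of the length-$(m+1)$ segments by $\mathbb{P}(N^{+}_n < m)+\mathbb{P}(N^{-}_n < m)$ and claim this tends to $0$ ``by (ii), giving $N^{+}_n\uparrow\infty$ and $N^{-}_n\uparrow\infty$ a.s.'' Part (ii) gives no such thing: under the convention $\Sup_k=\infty$ for $k>N^{+}_{\infty}$, the statement $\Sup_n\to\infty$ is trivially compatible with $N^{+}_{\infty}<\infty$, and when the walk drifts (say $\mathbb{E}X<0$ in the i.i.d.\ case) one indeed has $N^{+}_{\infty}<\infty$ a.s., so $\mathbb{P}(N^{+}_n<m)$ does not vanish --- a case you yourself single out in your discussion of (ii). The repair is to work throughout with the $[0,\infty]$-valued sequences obtained by extending the finite fragments by $\infty$ (which is how the limit $(\Sup,-\Sdo)$ must be read anyway): under your coupling the first $m+1$ coordinates of the two sides already agree as soon as $N^{+}_n\ge \min(m,N^{+}_\infty)$ and $N^{-}_n\ge \min(m,N^{-}_\infty)$, so the discrepancy probability is at most $\mathbb{P}\bigl(N^{+}_n<\min(m,N^{+}_\infty)\bigr)+\mathbb{P}\bigl(N^{-}_n<\min(m,N^{-}_\infty)\bigr)$, which tends to $0$ simply because $N^{\pm}_n\uparrow N^{\pm}_\infty$; no transience is needed for (i). A smaller caveat: your de Finetti reduction in (ii) requires $\mathbb{P}(X=0\mid\mathcal{E})<1$ a.s., which is slightly stronger than the stated hypothesis $\mathbb{P}(X_1=0)<1$ (a mixture putting positive mass on the all-zero walk satisfies the hypothesis but not the conclusion for $-\Sdo$); this is an implicit assumption inherited from the lemma as stated, but it should be flagged when you condition on $\mathcal{E}$.
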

Note that neither \cite{Bertoin93} nor \cite{Feller2} mentions the convergence in total variation \eqref{eq:Fellertvn}.
It is only stated in \cite{Bertoin93} that the left side of \eqref{eq:Fellertvn} converges in distribution to its right side.
However, the argument in \cite{Bertoin93}, as is clear in the proof of Theorem \ref{thm:main}, implies the convergence in total variation.

\quad An interesting issue which does not seem to have been addressed in the literature is whether the path of the original walk $(S_k, \, k \ge 0)$ can be recovered almost surely from the pair of infinite horizon Feller chains $(\Sup, \Sdo)$.
While this turns out to be the case, it is not a trivial consequence of the fact that the walk $\{S_k, \, 0 \le k \le n\}$
can be recovered from the finite horizon chain segments $(\Sup_k, \, 1 \le k \le N^{+}_n)$ and $(\Sdo_k, \, 1 \le k \le N^{-}_n)$.
As discussed before, the finite segment recovery involves a reverse induction using the values of $N^{+}_n$ and $N^{+}_n$
to progressively determine the values of $N^{+}_k$ and $N^{-}_k$, and hence $S_k = \Sup_{N^{+}_k} + \Sdo_{N^{-}_k}$ for smaller values of $k$.
But given the entire paths of the two infinite horizon chains, there is no obvious way to determine for any particular $n$ how the $n$ steps are split into $N^{+}_n$ and $N^{-}_n$, so no obvious beginning for an inductive recovery.
This difficulty is avoided by an alternative expression for the recovery of $(S_k, \, k \ge 0)$ from $(\Sup_k, \, k \ge 1)$ and $(\Sdo_k, \, k \ge 1)$ via the ascending and descending chain segments.
To illustrate, we describe how $(S_k, \, 0 \le k \le n)$ is recovered from $(\Sup_k, \, 1 \le k \le N^{+}_n)$ and $(\Sdo_k, \, 1 \le k \le N^{-}_n)$ by such an encoding.

\quad In terms of the segment $(\Sup_k, \, 0 \le k \le N^{+}_n)$, define the sequence of {\em future minimum times} inductively by $\Gup_0 = 0$ and for $j \ge 1$,
\begin{equation}
\label{eq:fmt}
\Gup_j := \max \left\{ k > \Gup_{j-1} : \Sup_k  = \min_{\Gup_{j-1} < i \le N^{+}_n } \Sup_i \right\},
\end{equation}
with the convention that $\max \emptyset = \infty$.
Let $\Jup_n: = \max\{j: \Gup_j < \infty\}$ be the total number of future minimum times derived from the segment $(\Sup_k, \, 0 \le k \le N^{+}_n)$.
So $\Gup_j < \infty$ (which is well-defined) if and only if $\Jup_n \ge j$.
By construction, $\Gup_1$ is the last time that the minimum of $\Sup$ over $\{1,\ldots, N^+_n\}$ is attained.
If $\Gup_1 = N^+_n$ then $\Jup_n= 1$, else $\Gup_2$ is the last time after $\Gup_1$ that the minimum of $\Sup$ over $\{\Gup_1 + 1, \ldots, N^+_n\}$ is attained. 
If $\Gup_2 =N^+_n $ then $\Jup_n = 2$, else $\cdots$, and so on.
Now the idea is to decompose $(\Sup_k, \, 0 \le k \le N^{+}_n)$ into the sequence of {\em ascending chain segments} 
\begin{equation}
\label{eq:fms}
\left(\Sup _{\Gup_{j-1} + k }, \, 0 \le k \le \Gup_{j} - \Gup_{j-1} \right), \quad 1 \le j \le \Jup_n.
\end{equation}
Note that in this construction,
\begin{itemize}[itemsep = 3 pt]
\item[$(i)$] 
$0 = \Gup_0 < \Gup_1 < \cdots < \Gup_{\Jup_n} = N^{+}_n$;
\item[$(ii)$]
$0 = \Sup_{\Gup_0} < \Sup_{\Gup_1} < \cdots < \Sup_{\Gup_{\Jup_n}} = \Sup_{N^{+}_n}$;
\item[$(iii)$]
the increments of each segment of the original walk $(S_k, \, 0 \le k \le n)$ over positive excursions, i.e. segments $(S_{U+i}, \, 0 \le i \le V)$ with $S_{U} \le 0$ and $S_{U +k} > 0$ for $k \le V$, and either $S_{U+V+1} \le 0$ 
or $U+V = n$ and $S_n > 0$, are encoded in $\Sup$ as the concatenation of increments of some consecutive sequences of future minimum segments.
\end{itemize}
Now a similar construction with $\Sdo$ yields a sequence of {\em future maximum times} $\Gdo_j$, $0 \le j \le \Jdo_n$,
and a corresponding sequence of {\em descending chain segments} derived from $(\Sdo_k, \, 0 \le k \le N^{-}_n)$.
The increments of the original walk over negative excursions are encoded in $\Sdo$, with analogs of (i) -- (iii) above, except that if $\mathbb{P}(S_k = 0)>0$ for some $k$ there is the complication that instead of all strict inequalities in (ii), the first inequality might be an equality, so the analog of (ii) is slightly different:
\begin{itemize} 
\item [$(ii')$] $0 = \Sdo_{\Gdo_0} \ge  \Sdo_{\Gdo_1} > \cdots > \Sdo_{\Gdo_{\Jup_n}} = \Sdo_{N^{-}_n}$.
\end{itemize}
The issue of recovering the first $n$ segments of the original walk $(S_k, \, 0 \le k \le n)$ from the two finite chains 
$(\Sup_k, \, 1 \le j \le N^{+}_n)$ and $(\Sdo_k, \, 1 \le j \le N^{-}_n)$ is now framed as one of determining the order in which the $\Jup_n$ ascending chain segments and the $\Jdo_n$ descending chain segments are riffled together as they appeared in the original path.
According to the finite $n$ recovery by reverse induction, there is one and only one possible order to do this riffling.
Moreover, it can be seen directly from the ascending and descending chain segments what this order is:
it is the only order of arrangement of these segments in which the final values of the segments are weakly increasing in absolute value, with the additional requirement that if two segments have the same absolute final value, the ascending segment is placed first.

\quad The extension of the definition of ascending and descending chain segments to the infinite horizon is obvious.
The result is stated as follows.
\begin{proposition}
\label{prop:Tanakaa}
Let $(S_k, \, k \ge 0)$ be a real-valued walk, and $(\Sup_k, \, 0 \le k \le N^{+}_{\infty})$ and $(\Sdo_k, \, 0 \le k \le N^{-}_{\infty})$ be the associated upward and downward Feller chain respectively. 
Define the future minimum times $\Gup$ of $\Sup$ as in \eqref{eq:fmt}, and the sequence of ascending chain segments as in \eqref{eq:fms} (by replacing $\Jup_n$ with $\Jup_{\infty}$); 
similarly define the future maximum times $\Gdo$ of $\Sdo$, and the sequence of descending chain segments.
Then the increments of $S$ are recovered from its sequence of ascending and descending chain segments 
according to the rule:
\begin{itemize}[itemsep = 3 pt]
\item[(i)]
Place the increments from these segments in the order of least absolute final values of the segments.
\item[(ii)]
If two segments have the same absolute final value, place the ascending segment first.
\end{itemize}
Assume further that $(S_k, \, k \ge 0)$ is an oscillating random walk with i.i.d. increments whose distribution is continous.
Then $N^{+}_{\infty} = N^{-}_{\infty} = \infty$ almost surely, and there are the identities in distribution
\begin{equation}
\left(\left(\Gup_j, \Sup_{\Gup_j} \right), \, j = 1,2, \ldots \right) \stackrel{d}{=} \left( \left(\tau^{+}_j, S_{\tau^{+}_j} \right), \,  j = 1,2, \ldots \right),
\end{equation}
\begin{equation}
\left( \left(\Gdo_j, \Sdo_{\Gdo_j} \right), \,  j = 1,2, \ldots \right) \stackrel{d}{=} \left( \left(\tau^{-0}_j, S_{\tau^{-0}_j} \right), \, j = 1,2, \ldots \right),
\end{equation}
where $( (\tau^{+}_j, S_{\tau^{+}_j} ), \,  j = 1,2, \ldots )$ is the sequence of strictly ascending ladder indices and heights of $S$, and $( (\tau^{-0}_j, S_{\tau^{-0}_j} ), \, j = 1,2, \ldots )$ is the sequence of weakly descending ladder indices and heights of $S$.
In particular, $(\tau^{+}_j, S_{\tau^{+}_j} )$ is the sum of $j$ independent copies of the first strictly ascending ladder index and height $(\tau^{+}, S_{\tau^{+}})$ with $\tau^{+}: = \inf\{n>0: S_n > 0\}$, 
and $(\tau^{-0}_j, S_{\tau^{-0}_j} )$ is the sum of $j$ independent copies of the first weakly descending ladder index and height $(\tau^{-0}, S_{\tau^{-0}})$ with $\tau^{-}: = \inf\{n>0: S_n \le 0\}$.
\end{proposition}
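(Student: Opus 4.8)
The plan has two parts, matching the two assertions of the proposition. For the purely combinatorial recovery rule I would sharpen the finite-horizon recovery sketched before the statement. Fix a horizon $n$: from $(\Sup_k,\,0\le k\le N^{+}_n)$ and $(\Sdo_k,\,0\le k\le N^{-}_n)$ one reads off $N^{+}_n,N^{-}_n$, then $S_n=\Sup_{N^{+}_n}+\Sdo_{N^{-}_n}$, hence $1(S_n>0)$ and whether $X_n$ belongs to $\Sup$ or $\Sdo$; by \eqref{eq:krecovery} this gives $N^{+}_{n-1},N^{-}_{n-1}$, and one iterates down to $X_1$. The point to isolate is that this ``peeling'' strips off \emph{one complete chain segment at a time}. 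From \eqref{eq:fmt} and the identity $\Gup_{\Jup_n}=N^{+}_n$, the $j$-th ascending chain segment (indices $\Gup_{j-1}<i\le\Gup_j$) has $\Sup_{\Gup_j}=\min_{\Gup_{j-1}<i\le\Gup_j}\Sup_i$, and symmetrically $\Sdo_{\Gdo_j}=\max_{\Gdo_{j-1}<i\le\Gdo_j}\Sdo_i$; moreover $0<\Sup_{\Gup_1}<\Sup_{\Gup_2}<\cdots$ and $0\le-\Sdo_{\Gdo_1}<-\Sdo_{\Gdo_2}<\cdots$ by properties (ii), (ii') recorded before the statement. The peeling begins with both counters at segment termini ($N^{+}_n=\Gup_{\Jup_n}$, $N^{-}_n=\Gdo_{\Jdo_n}$), and while the $\Sdo$-counter is held at $N^{-}$ it removes $\Sup$-increments exactly as long as the running $\Sup$-value exceeds $-\Sdo_{N^{-}}$; since the terminal value of a $\Sup$-segment is its minimum (so once the segment is entered all of its values are $>-\Sdo_{N^{-}}$), the whole segment is removed, after which one continues into earlier $\Sup$-segments or switches to $\Sdo$ by the same comparison, the counters always staying at segment termini. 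Hence the peeling is exactly the \emph{reverse merge} of the increasing sequences $(\Sup_{\Gup_j})_j$ and $(-\Sdo_{\Gdo_j})_j$ in decreasing order of value, the $\Sup$-segment of level $c$ being taken after the $\Sdo$-segment of level $c$ since then $S=c-c\le0$; read forwards this is precisely rules (i)--(ii). To reach the infinite horizon I would observe that for each fixed $j$ the $j$-th ascending (resp. descending) chain segment of the finite chains agrees with that of the infinite chains once $N^{+}_n$ (resp. $N^{-}_n$) is large, so in the upwardly transient case---automatic for the walks of interest by Lemma~\ref{lem:Fellerinf}(ii), whence $\Sup_{\Gup_j},-\Sdo_{\Gdo_j}\to\infty$ and each $X_k$ is reconstructed after finitely many comparisons---the finite-horizon recoveries are mutually consistent and pass to the limit.

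For the distributional identities, assume $S$ is an oscillating random walk with i.i.d.\ continuous increments. Oscillation gives $\limsup_n S_n=+\infty$ and $\liminf_n S_n=-\infty$ a.s., hence infinitely many $n$ with $S_n>0$ and infinitely many with $S_n\le0$, i.e.\ $N^{+}_\infty=N^{-}_\infty=\infty$ a.s. By Theorem~\ref{thm:Bertoin93} (equivalently Lemma~\ref{lem:Fellerinf}(iv)--(v)), $\Sup$ is the Doob $h^{\uparrow}$-transform of $S$ killed on entering $(-\infty,0]$, that is $S$ conditioned to stay strictly positive forever, and $\Sdo$ is $S$ conditioned to stay $\le0$ forever. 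For $\Sup$ I would use the decomposition at its future infimum---attained at the future minimum times $\Gup_j$---into i.i.d.\ pieces, the pair (length, height) of a piece being distributed as the first strictly ascending ladder epoch and height $(\tau^{+},S_{\tau^{+}})$ of $S$; for $\Sdo$, the decomposition at its future supremum---attained at the $\Gdo_j$---into i.i.d.\ pieces with (length, height) distributed as $(\tau^{-0},S_{\tau^{-0}})$. This is Tanaka's construction \cite{Tanaka89} (see also Bertoin--Doney \cite{BD94}); continuity of the increments collapses the weak/strict distinction and rules out ties. Summing the i.i.d.\ pieces yields the stated identities, and the renewal/convolution structure of $(\tau^{+}_j,S_{\tau^{+}_j})$ and $(\tau^{-0}_j,S_{\tau^{-0}_j})$ is immediate from their independence.

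I expect the combinatorial core of the first assertion to be the main obstacle: verifying carefully that the reverse induction removes whole chain segments, and above all that the recovery survives the passage to the infinite horizon---the point explicitly flagged as an open issue in the discussion before the statement---for which the reverse-merge description is the crucial device. In the second assertion the delicate point is matching the \emph{joint} law (time together with height, weak versus strict) of the future-extremum decomposition of the conditioned walk with the ladder structure of $S$, which is cleanest via Tanaka's time-reversal argument rather than a direct computation.
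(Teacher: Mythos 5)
Your proposal is correct and follows essentially the same route as the paper: the recovery rule is obtained by analyzing the finite-horizon reverse-induction (peeling) algorithm and observing that it removes whole ascending/descending chain segments in order of absolute final value, and the distributional identities are deduced from Tanaka's construction of the Feller chains (with oscillation giving $N^{+}_{\infty}=N^{-}_{\infty}=\infty$). The paper's proof is just a terse citation of these two ingredients, so your write-up simply supplies the details it leaves implicit.
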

\begin{proof}
The first part of this proposition follows the finite $n$ recovery algorithm described in the preceding paragraph.
The second part is a consequence of Tanaka's construction \cite{Tanaka89} of Feller's chains $(\Sup, \Sdo)$.
\end{proof}

\section{Order statistics of general random walks}
\label{sc3}
\quad In this section we prove the main results, Theorem \ref{thm:main}, Proposition \ref{prop:main}, Corollary \ref{cor:intro} and Proposition \ref{prop:mainrate} regarding the order statistics of general random walks.

\subsection{Proof of Theorem \ref{thm:main}}
\label{sc31}
Let $0 =  \widetilde{S}_{0,n}, \widetilde{S}_{1,n}, \ldots, \widetilde{S}_{n,n}$ be the random sequence with increments obtained by concatenation
of the first $N^{-}_n$ increments $(X^{\downarrow}_{1}, \ldots, X^{\downarrow}_{N^{-}_n})$ of $\Sdo$ in reverse order, followed by the first $N^{+}_n$ increments $(X^{\uparrow}_1, \ldots, X^{\uparrow}_{N^{+}_n})$ of $\Sup$ in their original order:
\begin{equation*}
\left(\widetilde{X}_{1,n}, \ldots, \widetilde{X}_{n,n} \right):= \left(X^{\downarrow}_{N^{-}_n}, \ldots, X^{\downarrow}_1, X^{\uparrow}_1, \ldots, X^{\uparrow}_{N^{+}_n}\right).
\end{equation*}
By Lemma \ref{lem:Fellern}, $\left( \widetilde{S}_{k,n}, \, 0 \le k \le n \right) \stackrel{d}{=} (S_k, \, 0 \le k \le n)$.
Regard the sequence $(W_{k,n}, \, 0 \le k \le n)$ as a measurable function, say $\Phi_n$, of $(S_k, \, 0 \le k \le n)$.
Then for each fixed $n$ there is the identity of joint distributions
\begin{equation}
\label{eq:Wid}
(W_{k,n}, \, 0 \le k \le n) \stackrel{d}{=} \left(\widetilde{W}_{k,n}, \, 0 \le k \le n \right),
\end{equation}
where the r.h.s. is $\Phi_n(\widetilde{S}_{k,n}, \, 0 \le k \le n)$.

\quad Observe that as $n$ varies, the two sequences displayed in \eqref{eq:Wid} develop by very different stochastic mechanisms.
For instance, on the l.h.s., as $n$ increments to $n+1$, if $X_{n+1}$ is sufficiently large negative, leading to a sum $S_{n+1}$ with $S_{n+1} = M_{0,n} - a$ with $a>0$, 
then $M_{0,n+1} = M_{0,n} - a$, so $W_{1,n} = a$ and $W_{k+1,n+1} = a + W_{k,n}$ for every $1 \le k \le n$.
In particular, if the process of downward ladder indices of $S$ is recurrent, this will happen infinitely often, and for any fixed $K$ as $n$ varies the values of $(W_{k,n}, \,0 \le k \le K)$ will keep changing as $n \to \infty$. 
In contrast, the right side of \eqref{eq:Wid} evolves in a completely different, and much simpler way, which is the key point of this argument.
As $n$ increments to $n+1$, all that happens on the r.h.s. of \eqref{eq:Wid} is that one of the two partial Feller chains 
$(\Sup_k, \, 0 \le k \le N^{+}_n)$  and $(\Sdo_k, \, 0 \le k \le N^{-}_n)$ has its length incremented by $1$, 
with the consequence that $(\widetilde{X}_{1,n+1}, \ldots, \widetilde{X}_{n+1,n+1})$ is obtained from $(\widetilde{X}_{1,n}, \ldots, \widetilde{X}_{n,n})$ by either prepending or appending $X_{n+1}$ according to whether $S_{n+1} \le 0$ or $S_{n+1} > 0$.
If $S_{n+1} \le 0$ then $(\widetilde{W}_{k,n}, \, 1 \le k \le n +1) $ is derived by insertion of the value 
$-\Sdo_{N^{-}_{n+1}}$ into the previous list $(\widetilde{W}_{k,n}, 1\,  \le k \le n)$,
while if $S_{n+1} > 0$ then $(\widetilde{W}_{k,n}, \, 1 \le k \le n +1) $ is derived by insertion of the value $\Sup_{N^{+}_{n+1}}$ into the previous list.
In either case, the previous list is updated by insertion of a single element, which finds its place somewhere in the previous list.
All  lower values in the list remain unchanged, while the indexing of all higher values in the list is pushed up by $1$. 
In terms of the associated point processes on $[0,\infty)$, all that happens is that a single point is inserted somewhere in the configuration of $n$ points to make a new configuration of $n+1$ points.
As a consequence, for each fixed $k$, with probability one, 
\begin{equation}
\label{eq:noninc}
\mbox{$\widetilde{W}_{k,n}$ for $n \ge k$ is weakly decreasing in $n$ and eventually equal to its limit $W_k$. }
\end{equation}

\quad To be more precise, let 
\begin{equation}
\label{eq:M>n}
M_{>n}: = \min\left(\{\Sup_j, \, j \ge N^{+}_n\} \cup \{-\Sdo_j, \, j \ge N^{-}_n\} \right).
\end{equation}
Let $\mbox{TV}(K,n)$ be the total variation distance between the distributions of the two sequences displayed in \eqref{tvconv}. 
Then in view of \eqref{eq:Wid} there is the coupling bound \cite[(2.6), p.12]{Lind92}:
\begin{equation}
\label{eq:coupling}
\mbox{TV}(K,n) \le  \mathbb{P}( \widetilde{W}_{n,k} \ne W_k \mbox{ for some } 1 \le k \le K ) \le  \mathbb{P}( M_{>n} \le W_K ),
\end{equation}
since if $M_{>n} >  W_K$ then every insertion after time $n$ does not change the value of any $\widetilde{W}_{k,n}$ with $1 \le k \le K$, and hence $\widetilde{W}_{k,m} = W_k$ for all $m \ge n$.
Now by Lemma \ref{lem:Fellerinf}, the sequences $\Sup$ and $-\Sdo$ are upward transient, which implies that
$\mathbb{P}(M_{>n} \uparrow \infty) = 1$.
So the probability bound in \eqref{eq:coupling} decreases to $0$ as $n \to \infty$.
This gives part $(i)$. 
The proof of $(ii)$ is in the same vein.

\subsection{Proof of Proposition \ref{prop:main}}
\label{sc32}
The key to the analysis of Proposition \ref{prop:main} relies on the following representation of the order statistics $M_{k,n}$, which is due to Pollaczek \cite{Pol52} and Wendel \cite[Theorem 2.2(a)]{Wendel60}.

\begin{lemma} \cite{Pol52, Wendel60}
\label{lem:orderid}
Let $(S_k, \, 0 \le k \le n)$ be a real-valued walk with exchangeable increments, 
and let $M_{k,n}$ be the order statistics defined by \eqref{eq:Mkn}.
For each $0 \le k \le n$, the distribution of $M_{k,n}$ is the convolution of the distributions of maximal and minimal values over segments of the walk of length $k$ and $n-k$ respectively:
\begin{equation}
\label{eq:pwd}
M_{k,n} \stackrel{d}{=} \overline{S}_k + \underline{S}'_{n-k}
\end{equation}
where $S'$ is copy of $S$ and independent of $S$.
\end{lemma}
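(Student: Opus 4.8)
The plan is to prove \eqref{eq:pwd} by a combinatorial argument that uses only the finite exchangeability of the increments, following the route of Wendel \cite{Wendel60}.

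First I would record the form of the identity that the argument naturally produces: writing $\underline{S}_{[k,n]} := \min_{0 \le i \le n-k}(S_{k+i} - S_k)$ for the minimum over the last $n-k$ steps of the walk itself, the claim is $M_{k,n} \stackrel{d}{=} \overline{S}_k + \underline{S}_{[k,n]}$. When the increments are i.i.d.\ the two terms on the right are independent and $\underline{S}_{[k,n]}$ is distributed as $\underline{S}_{n-k}$, which is precisely \eqref{eq:pwd} with $S'$ an independent copy of $S$. It then suffices to prove this identity for a uniformly random ordering of a fixed finite multiset $\{a_1, \dots, a_n\}$: conditionally on the unordered collection of values, a finite exchangeable sequence is exactly such a uniform ordering, and writing the general exchangeable law as the mixture $\mu = \int \nu_m\, Q(dm)$ over the law $Q$ of the multiset passes the identity from each $\nu_m$ to $\mu$ (and likewise for the i.i.d.\ case, with $Q$ the law of an $n$-sample).

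For a uniform ordering of $\{a_1, \dots, a_n\}$ I would compare the two sides through their level sets. For any $t$ one has $M_{k,n} > t$ exactly when at least $n+1-k$ of $S_0, \dots, S_n$ exceed $t$, so it is enough to match the law of the occupation count $\#\{0 \le j \le n: S_j > t\}$ with the corresponding quantity built from $\overline{S}_k$ and $\underline{S}_{[k,n]}$. Decomposing the path $(S_j)$ at the level $t$ into its successive excursions strictly above $t$ and its complementary stretches at or below $t$, and comparing with the path obtained by placing all the ``below'' stretches first and all the ``above'' stretches afterwards, one is led to a level-$t$ refinement of Sparre Andersen's equivalence principle --- the same combinatorics underlying Lemma \ref{lem:Fellern} and Feller \cite{Feller2, SparreA}. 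The main obstacle is to make this rearrangement precise and measure preserving on the set of orderings; as always in Sparre Andersen type arguments, the delicate points are the boundary conventions: the term $S_0 = 0$ must be counted once, the inequality at level $t$ is strict on one side and weak on the other (compare the strict ascending / weak descending ladder structure in Lemma \ref{lem:Fellerinf}), and ties among the $a_i$ --- present for $\pm 1$ walks --- force an ``ascending segment first'' convention exactly as in Proposition \ref{prop:Tanakaa}. A purely analytic alternative, the original approach of Pollaczek \cite{Pol52}, is to verify the identity of characteristic functions $\mathbb{E}[e^{i\lambda M_{k,n}}] = \mathbb{E}[e^{i\lambda \overline{S}_k}]\,\mathbb{E}[e^{i\lambda \underline{S}_{n-k}}]$ by means of the Spitzer--Baxter / Wiener--Hopf factorization; this sidesteps the bijection at the cost of hiding its combinatorial content, and I would present whichever route proves shorter.
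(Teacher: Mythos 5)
Your framework is sound, and in one respect more careful than the statement you were asked to prove: the identity one can actually establish for exchangeable increments is the within-path version $M_{k,n} \stackrel{d}{=} \overline{S}_k + \min_{0 \le i \le n-k}(S_{k+i}-S_k)$, and \eqref{eq:pwd} with an unconditionally independent copy $S'$ really does require i.i.d.\ increments (or conditioning on the exchangeable $\sigma$-field). Indeed, if $(X_1,X_2)$ equals $(1,1)$ or $(-1,-1)$ with probability $1/2$ each, then $M_{1,2} = \pm 1$ with probability $1/2$, whereas $\overline{S}_1 + \underline{S}'_1$ puts mass $1/2$ at $0$; only the equality of expectations survives, which is all the paper uses in \eqref{eq:pwd1}. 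Your reduction to a uniform random ordering of a fixed multiset is valid (conditionally on the multiset of increments an exchangeable sequence is a uniform ordering), and the level-set equivalence $\{M_{k,n} > t\} = \{\#\{0 \le j \le n : S_j > t\} \ge n+1-k\}$ is correct. Note also that the paper offers no proof of this lemma: it is quoted from Pollaczek and Wendel, with Dassios's remark that Wendel's argument extends to exchangeable increments, so there is no internal argument to compare against.

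The genuine gap is that the entire content of the lemma is the level-$t$ combinatorial identity, and you leave exactly that step unproven: ``the main obstacle is to make this rearrangement precise and measure preserving'' is an accurate description of the missing proof, not a proof. The proposed shuffle --- cut the path at level $t$ and place the ``below'' stretches before the ``above'' stretches --- is modelled on Feller's level-zero rearrangement behind Lemma \ref{lem:Fellern}, but at a level $t \ne 0$ it is not clear that this map is a bijection of orderings of the multiset, nor that it converts the event ``at least $n+1-k$ of the $S_j$ exceed $t$'' into the event $\{\overline{S}_k + \min_{0\le i\le n-k}(S_{k+i}-S_k) > t\}$ with the split point $k$ fixed in advance: in Feller's argument the split occurs at the random index $N^{-}_n$, and matching a prescribed deterministic $k$ is precisely where Wendel's operator induction, Port's counting argument, or the quantile-transform bijection (which reorders increments by the rank of the pre-step value $S_{j-1}$, not by position relative to a level) do real work. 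Until such a bijection, with its tie and boundary conventions, is exhibited and shown measure preserving, the proposal restates the lemma rather than proving it. The analytic fallback does not close this gap either: Spitzer--Baxter/Wiener--Hopf factorization rests on independence of the increments, so it is unavailable after your reduction to a uniform ordering of a fixed multiset and by itself covers only the i.i.d.\ case, not the exchangeable form that the paper invokes in the proof of Proposition \ref{prop:main}.
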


\quad Pollaczek proved Lemma \ref{lem:orderid} for a random walk with i.i.d. increments, and attributed this result to Bohnenblust, Spitzer and Welsh. 
Later Wendel \cite{Wendel60} and Port \cite[Section V]{Port64} provided more combinatorial proofs.
As pointed out by Dassios \cite{Dassios96}, Wendel's argument carries over to any walk with exchangeable increments. Continuous analogs of \eqref{eq:pwd} on the quantile of stochastic processes were studied in \cite{Dassios95, Dassios96, ERY}.

\quad We also recall a formula for the expectation of $\max_{0 \le k \le n} S_k$ and $\min_{0 \le k \le n} S_k$, which is a simple consequence of Spitzer's identity \cite[Theorem 3.1]{Spitzer56} for $S$ a random walk with i.i.d. increments.
It is lesser known that this formula also holds for a walk with exchangeable increments \cite{BS73, SD76}.

\begin{lemma} \cite{SD76, Spitzer56}
\label{lem:expmm}
Let $(S_k, \, 0 \le k \le n)$ be a real-valued walk with exchangeable increments. 
Then
\begin{equation}
\label{eq:spitzer}
\mathbb{E} \overline{S}_n = \sum_{k = 1}^n \frac{\mathbb{E}S_k^{+}}{k} \quad \mbox{and} \quad 
\mathbb{E} \underline{S}_n = - \sum_{k = 1}^n \frac{\mathbb{E}S_k^{-}}{k},
\end{equation}
where $S^{+}_k: = \max(S_k, 0)$ and $S^{-}_k: = - \min(S_k, 0)$.
\end{lemma}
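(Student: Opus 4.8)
The plan is to reduce the exchangeable case to the i.i.d.\ case via de Finetti's theorem, after first recalling the i.i.d.\ statement (Spitzer's identity), which gives the two formulas in \eqref{eq:spitzer} directly. For the i.i.d.\ case, I would invoke the classical combinatorial identity underlying Spitzer's formula: for each fixed $n$, writing $\sigma$ for a uniformly random permutation of $\{1,\dots,n\}$ and using the cycle structure of $\sigma$, one has $\mathbb{E}\,\overline{S}_n = \sum_{k=1}^n \frac{1}{k}\,\mathbb{E} S_k^{+}$, and symmetrically $\mathbb{E}\,\underline{S}_n = -\sum_{k=1}^n \frac{1}{k}\,\mathbb{E} S_k^{-}$ upon replacing $S$ by $-S$ (which has i.i.d.\ increments $-X_i$, so that $\max$ becomes $-\min$ and $S_k^{+}$ becomes $S_k^{-}$). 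This second identity is therefore not an independent fact but an immediate corollary of the first by the sign change $x \mapsto -x$, and I would state it that way to avoid repeating the argument.

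For the passage to exchangeable increments, I would use de Finetti's theorem: conditionally on the exchangeable $\sigma$-field $\mathcal{E}$, the increments $X_1,\dots,X_n$ are i.i.d., so $\mathbb{E}[\overline{S}_n \mid \mathcal{E}] = \sum_{k=1}^n \frac{1}{k}\,\mathbb{E}[S_k^{+}\mid\mathcal{E}]$ holds almost surely by the i.i.d.\ case applied to the conditional law; taking expectations over $\mathcal{E}$ and using the tower property gives $\mathbb{E}\,\overline{S}_n = \sum_{k=1}^n \frac{1}{k}\,\mathbb{E} S_k^{+}$, and likewise for the minimum. Strictly speaking de Finetti applies to infinite exchangeable sequences; for a finite exchangeable sequence one should either assume it extends to an infinite one or, more cleanly, run the finite combinatorial argument directly, since the cycle-lemma identity is purely a statement about the uniform distribution on rearrangements of the $n$ increments, which is exactly the content of finite exchangeability. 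I would take the latter route: for \emph{any} finite exchangeable law, the joint law of $(X_1,\dots,X_n)$ is a mixture of uniform distributions over orbits of the symmetric group, and on each orbit the cycle lemma gives the identity with the same combinatorial constants $1/k$; averaging over the mixture preserves it.

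The only delicate point is making the combinatorial identity itself precise, i.e.\ explaining why the uniform-permutation averaging of $\max_{0\le k\le n} S_k$ produces the weights $1/k$ times $\mathbb{E} S_k^{+}$. The cleanest statement is Spitzer's original one, which I would quote from \cite{Spitzer56, SD76} rather than reprove; the exchangeable extension is already recorded in \cite{BS73, SD76}, so in fact the lemma can be cited wholesale, with the remark that the formula for $\mathbb{E}\,\underline{S}_n$ follows from that for $\mathbb{E}\,\overline{S}_n$ by applying the latter to the walk with increments $-X_i$. I expect no real obstacle here: this lemma is a known result and the only content of the proof is the observation that the $\min$ formula is the $\max$ formula in disguise, together with a one-line invocation of de Finetti (or finite exchangeability) to pass from i.i.d.\ to exchangeable. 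Since the paper uses this only as an auxiliary input to Proposition \ref{prop:main}, a proof consisting of these two remarks plus the citations is entirely adequate.
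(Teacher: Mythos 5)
Your proposal is correct and matches the paper's treatment: the paper does not prove this lemma either, but simply cites Spitzer's identity for the i.i.d.\ case \cite{Spitzer56} and its lesser-known extension to exchangeable increments \cite{BS73, SD76}, exactly as you suggest. Your additional remarks --- deducing the minimum formula from the maximum formula by replacing $X_i$ with $-X_i$, and justifying the exchangeable case either by the finite-exchangeability (mixture over permutation orbits) form of Spitzer's combinatorial lemma or by citation --- are sound and add nothing that conflicts with the paper.
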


\begin{proof}[Proof of Proposition \ref{prop:main}]
Part $(i)$ is a consequence of the monotone decreasing nature of the convergence \eqref{eq:noninc}: the almost sure convergence of $\widetilde{W}_{k,n}$ to $W_k$  is dominated by $\widetilde{W}_{k,k}$, 
which is easily seen to have a finite $p^{th}$ moment if so does $|X|$. The argument for $D_{k,n}$ is similar.

\quad For part $(ii)$, by Lemma \ref{lem:orderid} and Lemma \ref{lem:expmm}, we have for any distribution of increments $X$ with $\mathbb{E}|X| < \infty$,
\begin{equation}
\label{eq:pwd1}
\mathbb{E}  M_{k,n} = \sum_{j=1}^k \frac{ \mathbb{E} (S_j^+) }{j} - \sum_{j=1}^{n-k} \frac{ \mathbb{E} (S_j^-) }{j},
\end{equation}
and hence by differencing
\begin{equation}
\label{eq:pwd3}
\mathbb{E} D_{k,n}  = \frac{ \mathbb{E} S_k^+ }{k} + \frac{ \mathbb{E} S_{n-k+1} ^{-}}{n-k+1}.
\end{equation}
The formula \eqref{eq:pwd3} implies that $\mathbb{E} D_{n-k+1,n} = \mathbb{E} D_{k,n}$, which is consistent with the
reversibility of spacings \eqref{eq:revspacings}.
The conclusion of part $(ii)$ follows from \eqref{eq:pwd3} and the well known fact \cite[Section 4.7]{Durrettbook} that $S_n/n$ is a reversed martingale 
so $S_{n-k+1} ^{-}/(n-k+1)$ converges as $n \to \infty$ both almost surely and in $L^1$ to $\mathbb{E}(X|\mathcal{E})^-$.
Moreover, if $\mathbb{E}(X|\mathcal{E}) = 0$ almost surely and $\mathbb{E}X^2 < \infty$, 
then $\mathbb{E}D_k = \mathbb{E}S^{+}_k/k$.
Note that $\mathbb{E}(S^+_k/\sqrt{k}) = \mathbb{E}[\mathbb{E}(S_k^{+}/\sqrt{k}|\mathcal{E})]$.
The asymptotics \eqref{eq:dasym0} follows as in the i.i.d. case by de Finetti's theorem and the dominated convergence theorem since
$\mathbb{E}(S^+_k/\sqrt{k}|\mathcal{E}) \le \sqrt{\mathbb{E}(S^2_k/k | \mathcal{E})} = \sqrt{\mathbb{E}(X^2|\mathcal{E})}$.
\end{proof}

\subsection{Proof of Corollary \ref{cor:intro}}
\label{sc3+}
Recall from Section \ref{sc2} that $\Gup_1$ (resp. $\Gdo_1$) is the future minimum time of $\Sup$ (resp. the future maximum time of $\Sdo$).
For $w > 0$, we have:
\begin{align}
\label{eq:pathW}
\mathbb{P}(W_1 > w) & = \mathbb{P}\left(\Sup_{\Gup_1} > w, \, -\Sdo_{\Gdo_1} > w\right) \notag \\
& = \mathbb{P}\left(\Sup_{\Gup_1} > w \right) \mathbb{P}\left(-\Sdo_{\Gdo_1} > w \right),
\end{align}
where the first equality is by the construction in Theorem \ref{thm:main}, and the second equality is due to the independence of the Feller chains $(\Sup, \Sdo)$ of a random walk.
Further by Proposition \ref{prop:Tanakaa} (Tanaka's construction of Feller's chains of a random walk),
\begin{equation}
\label{eq:laddertana}
\Sup_{\Gup_1} \stackrel{d}{=} S_{\tau^+} \quad \mbox{and} \quad \Sdo_{\Gdo_1} = S_{\tau^{-0}}.
\end{equation}
Combining \eqref{eq:pathW} and \eqref{eq:laddertana} yields the equation \eqref{eq:Wdecomp}.
\subsection{Proof of Proposition \ref{prop:mainrate}}
\label{sc33}
Recall the definition of $M_{>n}$ from \eqref{eq:M>n}, and it follows from the coupling bound \eqref{eq:coupling} that
\begin{align*}
\mbox{TV}(K, n) & \le \mathbb{P}(M_{>n} \le W_K) \\
                           & \le \mathbb{P}\left(\min_{k \ge N^{+}_n} \Sup_k \le W_K \right) + \mathbb{P}\left(\min_{k \ge N^{-}_n} (-\Sdo_k) \le W_K \right).
\end{align*}
For $k \ge 1$, let $W^{\downarrow}_k: = M_k(\{-\Sdo_n, \, n \ge 0\})$ be the $k^{th}$ smallest element in the chain $- \Sdo$. 
By definition, for each finite $K$ we have $W_K \le W^{\downarrow}_K$ almost surely. 
Therefore,
\begin{equation}
\label{eq:futurebound}
\mathbb{P}\left(\min_{k \ge N^{+}_n} \Sup_k \le W_K \right) \le  \mathbb{P}\left(\min_{k \ge N^{+}_n} \Sup_k \le W^{\downarrow}_K \right).
\end{equation}
By Theorem \ref{thm:Bertoin93}, the two Feller chains $\Sup$ and $\Sdo$ are independent, and so are $\min_{k \ge N^{+}_n} \Sup_k$ and $W^{\downarrow}_K$.
Thus, the key to the analysis of \eqref{eq:futurebound} is to bound $\mathbb{P}(\min_{k \ge N^{+}_n} \Sup_k \le x)$ for any fixed $x > 0$.
Introduce an auxiliary quantity $A(n)$ such that
\begin{equation*}
A(n) \rightarrow \infty \quad \mbox{and} \quad A(n)/\sqrt{n} \rightarrow 0 \quad \mbox{as } n \rightarrow \infty,
\end{equation*}
which we will determine later.
Note that
\begin{equation}
\label{eq:boundx}
\mathbb{P}\left(\min_{k \ge N^{+}_n} \Sup_k \le x\right) \le \mathbb{P}\left(\Sup_{N^{+}_n} \le  A(n)\right)
+ \mathbb{P}\left(\Sup_{N^{+}_n} >  A(n), \, \min_{k \ge N^{+}_n} \Sup_k \le  x\right).
\end{equation}
Now we treat the two terms on the r.h.s. of \eqref{eq:boundx} separately. 
First by Lemma \ref{lem:Fellern}, $\Sup_{N^{+}_n} \stackrel{d}{=} \max_{0 \le k \le n}S_k$.
By the KMT embedding \cite{KMT75}, there is a coupling between the random walk $(S_k, \, k \ge 0)$ and Brownian motion $(B_t, \, t \ge 0)$ such that 
\begin{equation*}
\mathbb{P}\left(\max_{0 \le k \le n} |S_k - B_k| > C_1 \log n \right) \le n^{-\frac{1}{2}} \quad \mbox{for some } C_1 > 0.
\end{equation*}
As a consequence,
 \begin{align}
 \label{eq:term1}
\mathbb{P}\left(\Sup_{N^{+}_n} \le  A(n)\right) \notag &= \mathbb{P}\left(\max_{0 \le k \le n} S_k \le A(n) \right)  \notag\\
 & \le n^{-\frac{1}{2}} + \mathbb{P}\left(\max_{0 \le k \le n} B_k \le C_1 \log n + A(n) \right) \notag \\
 & \le n^{-\frac{1}{2}} +  C_2  n^{-\frac{1}{2}}(\log n + A(n)) \le C_3  n^{-\frac{1}{2}}(\log n + A(n)),
 \end{align}
for some $C_2, C_3> 0$.
Now let $\tau^{\uparrow}_{A(n)}: = \inf\{k: \Sup_k > A(n)\}$ the first time at which the upward Feller chain $\Sup$ enters $(A(n), \infty)$.
It is easy to see that
$\left\{\Sup_{N^{+}_n} >  A(n), \, \min_{k \ge N^{+}_n} \Sup_k \le  x \right\} \subset \left\{\tau^{\uparrow}_{A(n)} < \infty, \, \min_{k \ge \tau^{\uparrow}_{A(n)} } \Sup_k \le  x \right\}$.
By conditioning on the value of $\Sup_{\tau^{\uparrow}}$, we get
\begin{align*}
\mathbb{P}\left(\Sup_{N^{+}_n} >  A(n), \, \min_{k \ge N^{+}_n} \Sup_k \le  x\right) & \le \mathbb{P} \left(\tau^{\uparrow}_{A(n)} < \infty, \, \min_{k \ge\tau^{\uparrow}_{A(n)} } \Sup_k \le  x \right) \\
& = \int_{A(n) \vee x}^{\infty} \mathbb{P} \left(\min_{k \ge\tau^{\uparrow}_{A(n)}} \Sup_k \le  x \Bigg| \Sup_{\tau^{\uparrow}_{A(n)}} = y \right)  \mathbb{P}\left(\tau^{\uparrow}_{A(n)} < \infty, \, \Sup_{\tau^{\uparrow}_{A(n)}}  \in dy \right).
\end{align*}
According to \cite[Lemma 4.1 \& A.1]{FC08},
\begin{equation*}
\mathbb{P} \left(\min_{k \ge\tau^{\uparrow}_{A(n)}} \Sup_k \le  x \Bigg| \Sup_{\tau^{\uparrow}_{A(n)}} = y \right)
= \frac{h^{\uparrow}(y) - h^{\uparrow}(y-x)}{h^{\uparrow}(y)},
\end{equation*}
where $h^{\uparrow}(\cdot)$ is defined in Theorem \ref{thm:Bertoin93}.
It is also known (see e.g. \cite[p.2155]{BD94}) that $h^{\uparrow}(\cdot)$ is the renewal function of the ladder height $S_{\tau^+}$.
By subadditivity of the renewal function, we have $h^{\uparrow}(y) - h^{\uparrow}(y-x) \le h^{\uparrow}(x)$.
Since $\mathbb{E}|X|^2 < \infty$, it follows from the key renewal theorem that $h^{\uparrow}(y) \sim C_4 y$ as $y \rightarrow \infty$ for some $C_4 > 0$.
The above observations imply that 
\begin{equation}
\label{eq:term2}
\mathbb{P}\left(\Sup_{N^{+}_n} >  A(n), \, \min_{k \ge N^{+}_n} \Sup_k \le  x\right) \le C_5 \,  h^{\uparrow}(x)A(n)^{-1},
\end{equation}
for some $C_5 > 0$.
Combining \eqref{eq:boundx}, \eqref{eq:term1} and \eqref{eq:term2} yields for $x > 0$,
\begin{align}
\label{eq:impest}
\mathbb{P}\left(\min_{k \ge N^{+}_n} \Sup_k \le x\right) \le
C_3  n^{-\frac{1}{2}}(\log n + A(n)) + C_5 \,  h^{\uparrow}(x)A(n)^{-1}.
\end{align}
By taking $A(n) = n^{\frac{1}{4}}$ (which optimizes the right side of \eqref{eq:impest} relative to $A(n)$), we get
\begin{equation}
\label{eq:impest2}
\mathbb{P}\left(\min_{k \ge N^{+}_n} \Sup_k \le x\right) \le C_6 \left(1 + h^{\uparrow}(x) \right) n^{-\frac{1}{4}},
\end{equation}
for some $C_6 > 0$.
Now by \eqref{eq:futurebound} and \eqref{eq:impest2}, we have
\begin{equation*}
\mathbb{P}\left(\min_{k \ge N^{+}_n} \Sup_k \le W_K \right) \le C_6 \, \left(1 + \mathbb{E}h^{\uparrow}(W^{\downarrow}_K)\right) n^{-\frac{1}{4}}.\end{equation*}
Finally, by Proposition \ref{prop:main} $(i)$ and the fact that $h^{\uparrow}(y)/y^2 \rightarrow 0$ as $y \rightarrow \infty$, we get $\mathbb{E}h^{\uparrow}(W^{\downarrow}_K) < \infty$.
A similar bound is derived for $\mathbb{P}\left(\min_{k \ge N^{-}_n} (-\Sdo_k) \le W_K \right)$, which yields the desired result.

\section{Order statistics of simple symmetric random walks}
\label{sc4}

\quad This section describes the limit process $(W_k, \, k \ge 0)$ of shifted order statistics 
$(M_{k,n} - M_{0,n}, \, 0 \le k \le n)$ derived from the path of a random walk with symmetric $\pm 1$ increments; 
that is $\mathbb{P}(X_i = 1) = \mathbb{P}(X_i = -1) = 1/2$.
According to Theorem \ref{thm:main}, $(W_k, k \ge 0)$ is the increasing rearrangement of the two Feller chains 
$(\Sup_{k}, k \ge 1)$ and $(- \Sdo_{k}, k \ge 0)$.
The key to the analysis is the occupation counting process $(L_{\ell}, \, \ell = 0,1,\ldots)$ defined by
\begin{equation}
\label{eq:Lcount}
L_{\ell} : = \sum_{k = 0}^{\infty} 1(W_k = \ell) = \Lup_{\ell} + \Ldo_{\ell},
\end{equation}
where
\begin{equation}
\label{eq:L+-count}
\Lup_{\ell}: = \sum_{k = 1}^{\infty} 1(\Sup_k = \ell ) \quad \mbox{and} \quad 
\Ldo_{\ell}: = \sum_{k = 0}^{\infty} 1(-\Sdo_k = \ell ).
\end{equation}
Note the convention that the minimal order statistic $W_0 = 0$ is included as a contribution to the count
$\Ldo_0 \ge 1$ of values attaining the minimum, while the $0$ value is deliberately excluded as a 
contribution to $\Lup_0 = 0$.

\quad For the simple symmetric random walk the general formula \eqref{eq:limexps} for $\mathbb{E} D_k$ simplifies to 
\begin{equation}
\label{demoivre}
\mathbb{E} D_k = \frac{ \mathbb{E} S_k^+ }{k } = \frac{1}{2} u_{\floor*{k/2}},
\end{equation}
where $u_m := \mathbb{P} ( S_{2m} = 0 )  = { 2 m \choose m } 2^{-2m} \sim \frac{ 1 } {\sqrt{\pi m } }$ as $m \to \infty$,
is the central term in the Binomial$(2m, 1/2)$ distribution.
The second equality in \eqref{demoivre} follows from Kemperman's formula and the reflection principle.
By Kemperman's formula and time reversal,
\begin{align*}
j \, \mathbb{P}(S_k = j) & = k \, \mathbb{P}(S_1 < j, \ldots, S_{k-1}< j, S_k = j) \\
& = k \, \mathbb{P}(S_1 > 0, \ldots, S_{k-1} > 0, S_k = j).
\end{align*}
By summing both sides over $j > 0$, we get $\mathbb{E}S^+_k/k = \mathbb{P}(S_1 > 0, \ldots, S_k > 0)$ 
which equals to $\frac{1}{2} u_{\floor*{k/2}}$ by the reflection principle (see e.g. \cite[p.77, (3.2)]{Feller1}).
For $\theta > 0$, recall the well known identity:
\begin{equation*}
(1-z)^{-\theta} = 1 + \sum_{k \ge 1} \frac{(\theta)_{k \uparrow}}{k!} z^k,
\end{equation*}
where $(\theta)_{k \uparrow}:= \prod_{i = 0}^{k-1} (\theta + i)$ is the Pochhammer function.
Specializing to $\theta = \frac{1}{2}$ yields the generating function $\sum_{k= 0}^\infty u_k z^k = (1 - z )^{-1/2}$, which then gives easily the generating function
\begin{equation}
\sum_{k= 1}^\infty \mathbb{E} D_k  z^k = \frac{1}{2} \left( ( 1 + z ) ( 1 - z^2)^{-1/2} - 1 \right) .
\end{equation}
Since the increments $X_k$ of $(S_k, \, k \ge 0)$ are restricted to $\pm 1$, the increments $D_k$ of $(W_k, \, k \ge 0)$ are restricted to be $0$ or $1$.
Thus $D_k$ is an indicator random variable, more precisely
\begin{equation}
\label{likeren}
D_k = \sum_{j=0}^\infty 1(  \eta_j = k ) =  1(\eta_j = k \mbox{ for some } j \ge 0 ) \quad \mbox{for } k = 1,2, \ldots,
\end{equation}
where 
\begin{equation}
\label{sigmaj}
\eta_j :=  \sum_{\ell=0} ^j  L_{\ell} = \sum_{k = 1}^\infty 1( W_k \le j ),
\end{equation}
is the number of all order statistics at or below level $j$.
Formula \eqref{likeren} looks as if a process of renewal indicators is constructed from the times $\eta_j$ of renewals generated by spacings $L_{\ell}$ in a delayed renewal process \cite[Chapter VI]{Asmussen03}. 
So the formula \eqref{demoivre} for the indicators $D_k$ defined by \eqref{likeren} parallels the well known role of 
$u_k = \mathbb{P}(S_{2k} = 0 )$ in the renewal indicator process $(1(S_{2k} = 0), \, k \ge 0 )$ whose spacings are independent copies of the time of first return to $0$ for the original random walk (see e.g. \cite[Chapter XIII]{Feller1}).  
However, the stochastic structure of the $(L_{\ell}, \, \ell  \ge 0)$ is more complex than the independent random spacings in a renewal process. 
See also \cite{Dwass67, Nara67, Ocka77, Renault08, RV60} for the distribution of order statistics and related  functionals of simple random walks.

\quad 
Call a process $(Z_k, \, k \ge 0)$ a {\em branching process with immigration distribution $F_0$ and offspring distribution $F_1$} if $(Z_k, \, k \ge 0)$ is a Markov chain with
\begin{equation*}
(Z_{k+1} | Z_0, \ldots , Z_k) \stackrel{d}{=} Y_0 + \sum _{i = 1}^ {Z_k} Y_i \quad k \ge 0,
\end{equation*}
where $X_0, X_1, X_2, \ldots$ are independent, with $Y_0$ distributed according to $F_0$ and $Y_i$ for $ i \ge 1$ distributed according to $F_1$.
In the present setting, $Y_0$ will always be a constant, either $0$ or $1$ or $2$, in which case $(Z_k, \, k \ge 0)$ is said to have either {\em no immigration} or {\em single} or {\em double immigration} as the case may be.
We say $Y$ has the Geo$_0(p)$ distribution on $ \{0,1, \ldots\}$ if $\mathbb{P}(Y = k) = p(1-p)^k$ for $k \ge 0$, 
and $Y$ has the Geo$_1(p)$ distribution on $ \{1,2, \ldots\}$ if $\mathbb{P}(Y = k) = p(1-p)^{k-1}$ for $k \ge 1$.
The following theorem summarizes some results underlying the construction of the occupation counting process $(L_{\ell}, \, \ell \ge 0)$.

\begin{theorem}
\label{thm:orderSSW}
Let $(L_{\ell}, \, \ell \ge 0)$ be the occupation counting process defined by \eqref{eq:Lcount}. Then
\begin{itemize}[itemsep = 3 pt]
\item[(i)]
$L$ may be represented as
\begin{equation}
L_{\ell} = Z_{\ell - 1} + Z_{\ell} - 2, \quad \ell \ge 0,
\end{equation}
where  $(Z_{\ell}, \, \ell = -1,0,1, \ldots)$ is a branching process with double immigration and Geo$_0(1/2)$ offspring distribution, with the convention $Z_{-1} = 1$.
In particular, $L_{0} = Z_{0} - 1$ has Geo$_1(1/2)$ distribution:
\begin{equation}
\label{geomhits}
\mathbb{P}( L_0 = k ) = \mathbb{P}(Z_0 = k + 1) = 2^{-k} \quad \mbox{for } k \ge 1.
\end{equation}
\item[(ii)]
For each $\ell \ge 1$, the conditional distribution of $L_{\ell}$ given $L_0 = k$ is that of 
\begin{equation}
G_0(\ell) + G_k (\ell) + \sum_{i=1}^{k - 1} B_i(\ell) G_i(\ell)
\end{equation}
where the $(G_i(\ell), \, 0 \le i \le k)$ are independent Geo$_1(\frac{1}{2 \ell})$ variables, and
the $(B_i(\ell), \, 1 \le i \le k-1)$ are $k-1$ independent Bernoulli$(\frac{1}{\ell})$ variables, independent also of the $(G_i(\ell), \, 0 \le i \le k)$.
\item[(iii)]
For each $k \ge 1$, $\eta_k$ defined by \eqref{sigmaj} has the generating function
\begin{equation}
\label{siggf}
\mathbb{E} z^{\eta_k} = \frac{ 1 } { V_k(1/z) V_{k+1}(1/z) },
\end{equation}
where $V_k$ is the sequence of Chebyshev polynomials of the third kind, defined by the Chebyshev recurrence
\begin{equation}
\label{chebrec}
V_{k}(x) := 2 x V_{k-1}(x) - V_{k-2}(x)  \quad k \ge 2,
\end{equation}
with initial condition $V_0(x):= 1$ and $V_1(x):= 2 x - 1$.
\end{itemize}
\end{theorem}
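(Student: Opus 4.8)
The plan is to work entirely with the two Feller chains, which for the simple symmetric walk are explicit birth--death chains. First I record the ingredients. By Theorem~\ref{thm:Bertoin93} together with Pitman's identification \cite{Pitman75}, $\Sup$ is the Markov chain on $\{0,1,2,\dots\}$ that moves $0\to1$ with probability $1$ and, from $x\ge1$, goes to $x+1$ with probability $\tfrac{x+1}{2x}$ and to $x-1$ with probability $\tfrac{x-1}{2x}$; a parallel evaluation of $h^{\downarrow}$ in Theorem~\ref{thm:Bertoin93} (one finds $h^{\downarrow}(x)$ proportional to $1-x$) shows that $-\Sdo$ is the chain on $\{0,1,2,\dots\}$ with $0\to1$ surely and, from $y\ge1$, $y\to y+1$ with probability $\tfrac{y+2}{2(y+1)}$ and $y\to y-1$ with probability $\tfrac{y}{2(y+1)}$. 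By Lemma~\ref{lem:Fellerinf} these two chains are transient to $+\infty$ and independent. The two elementary facts I will use repeatedly, both obtained by the optional--stopping/$h$--transform computation, are: such a chain \emph{conditioned to hit a lower level $m$} evolves, before doing so, as an ordinary simple symmetric walk absorbed at $m$, whereas \emph{conditioned never to return} to $m$ it evolves as a $\Sup$--type chain; and $\mathbb P_x(\text{hit }x-1)=\tfrac{x-1}{x}$ for the $\Sup$ chain, with the companion $\tfrac1\ell$ gambler's--ruin probability for a simple walk from $1$ to reach $\ell$ before $0$.

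For part (i), let $U^{\uparrow}_{\ell}$ (resp.\ $U^{\downarrow}_{\ell}$) be the number of up--steps across the edge $\{\ell,\ell+1\}$ made by $\Sup$ (resp.\ by $-\Sdo$), and set $Z_{\ell}:=U^{\uparrow}_{\ell}+U^{\downarrow}_{\ell}$ for $\ell\ge0$ and $Z_{-1}:=1$. Since a nearest--neighbour chain transient to $+\infty$ up--crosses every edge exactly once more than it down--crosses it, the number of visits to a level $\ell\ge1$ equals $(\text{up--crossings of }\{\ell-1,\ell\})+(\text{down--crossings of }\{\ell,\ell+1\})=U_{\ell-1}+(U_{\ell}-1)$; adding the two chains' contributions and using $\Lup_0=0$, $U^{\uparrow}_0=1$ gives $L_{\ell}=Z_{\ell-1}+Z_{\ell}-2$ for every $\ell\ge0$ (the convention $Z_{-1}=1$ is forced by the case $\ell=0$). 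The branching structure comes from excursion theory: conditionally on $U_{\ell}=u$ a chain makes $u$ sojourns above level $\ell$, of which $u-1$ are finite (hence, conditioned to return, simple walks) and one is infinite (hence, conditioned to escape, of $\Sup$ type), and the numbers of up--crossings of the next edge $\{\ell+1,\ell+2\}$ contributed by these sojourns are independent, $\mathrm{Geo}_0(1/2)$ for each finite one and $\mathrm{Geo}_1(1/2)=1+\mathrm{Geo}_0(1/2)$ for the infinite one. Therefore $U_{\ell+1}=1+\sum_{i=1}^{U_{\ell}}\xi_i$ with i.i.d.\ $\mathrm{Geo}_0(1/2)$ summands for each chain; adding the two independent recursions yields $Z_{\ell+1}=2+\sum_{i=1}^{Z_{\ell}}\xi_i$, the critical branching process with double immigration and $\mathrm{Geo}_0(1/2)$ offspring. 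The base case $Z_0=1+\Ldo_0$ with $\Ldo_0\sim\mathrm{Geo}_1(1/2)$ (each excursion of $-\Sdo$ from $0$ returns with probability $\tfrac12$) gives $Z_0=2+\xi_1$ and \eqref{geomhits}.

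For part (ii), note $L_0=\Ldo_0$, so $\{L_0=k\}$ is the event that the first $k-1$ excursions of $-\Sdo$ above $0$ return to $0$ and the $k$-th escapes; by the strong Markov property these $k-1$ returning excursions are then i.i.d.\ simple walks on $\{0,1,\dots\}$ from $1$ absorbed at $0$, the escaping one is an independent $\Sup$--type walk from $1$, and $\Sup$ itself is independent of all of them. Now $L_{\ell}=\Lup_{\ell}+\Ldo_{\ell}$, and for $\ell\ge1$: a returning excursion reaches level $\ell$ with probability $\tfrac1\ell$ by gambler's ruin, and given that it does, its number of visits to $\ell$ before absorption at $0$ is $\mathrm{Geo}_1(\tfrac1{2\ell})$, since the probability of returning to $\ell$ before hitting $0$ is $\tfrac{2\ell-1}{2\ell}$; the escaping excursion and the chain $\Sup$ each reach $\ell$ surely and visit it a $\mathrm{Geo}_1(\tfrac1{2\ell})$ number of times. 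Summing the $k-1$ Bernoulli$(\tfrac1\ell)$--thinned geometrics coming from the returning excursions with the two unthinned geometrics coming from the escaping excursion and from $\Sup$ gives exactly the asserted representation $G_0(\ell)+G_k(\ell)+\sum_{i=1}^{k-1}B_i(\ell)G_i(\ell)$.

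For part (iii), from part (i) one has $\eta_k=\sum_{\ell=0}^{k}L_{\ell}=2\sum_{\ell=0}^{k-1}Z_{\ell}+Z_k-2k-1$, so with $g_k(s):=\mathbb E\big[z^{2\sum_{\ell=0}^{k-1}Z_{\ell}}\,s^{Z_k}\big]$ we have $\mathbb E z^{\eta_k}=z^{-2k-1}g_k(z)$. Conditioning on $Z_k$ and using the branching recursion with offspring generating function $f(s)=\tfrac1{2-s}$ gives $g_{k+1}(s)=s^2\,g_k\!\big(\tfrac{z^2}{2-s}\big)$ with $g_0(s)=\tfrac{s^2}{2-s}$; iterating this M\"obius substitution, an induction on $k$ (using $\widehat V_j(\tfrac{z^2}{2-s})=\widehat V_{j+1}(s)/(2-s)$ for $j\ge1$) shows $g_k(s)=\dfrac{s^2 z^{4k}}{\widehat V_k(s)\widehat V_{k+1}(s)}$, where $\widehat V_0=1$, $\widehat V_1(s)=2-s$, and $\widehat V_k(s)=2\widehat V_{k-1}(s)-z^2\widehat V_{k-2}(s)$. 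Putting $s=z$, the polynomials $\widehat V_k(z)$ satisfy the same two--term recursion and initial data as $z^kV_k(1/z)$ for the Chebyshev polynomials of the third kind in \eqref{chebrec}, whence $\mathbb E z^{\eta_k}=z^{-2k-1}\cdot\dfrac{z^{4k+2}}{z^kV_k(1/z)\cdot z^{k+1}V_{k+1}(1/z)}=\dfrac1{V_k(1/z)V_{k+1}(1/z)}$, which is \eqref{siggf}. I expect the main obstacle to be the excursion identification underlying part (i): that the offspring law is \emph{exactly} $\mathrm{Geo}_0(1/2)$ rests on the fact that a Feller--chain excursion conditioned to return to its base level is a plain simple symmetric walk (while one conditioned never to return is again of $\Sup$ type), after which the edge--crossing count is a one--line geometric computation; keeping the ``$-1$'' terms, the double immigration, and the boundary level $\ell=0$ mutually consistent also needs care. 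Part (iii) is then routine modulo tracking the accumulating $s^2$ factors so that the recurrence \eqref{chebrec} emerges after the substitution $x=1/z$.
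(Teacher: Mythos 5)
Your argument is correct, and it reaches the theorem by a more self-contained route than the paper does. For part (i) the two proofs share the same skeleton --- $Z_\ell$ is the sum of the upcrossing counts of $\Sup$ and $-\Sdo$ over the edge $\{\ell,\ell+1\}$, and $L_\ell=Z_{\ell-1}+Z_\ell-2$ is the standard visit/crossing identity --- but where the paper simply invokes Knight's lemma (Lemma \ref{lem:Knight}, \cite{Knight63}) for the single-immigration Geo$_0(1/2)$ branching structure of each chain, you reprove it from the explicit $h$-transform description of the two Feller chains ($h^{\uparrow}(x)\propto x$, $h^{\downarrow}$ giving $1+y$), using that a sojourn above level $\ell$ conditioned to return is a plain simple walk and the escaping one is again of $\Sup$ type; your bookkeeping of the initial values ($U^{\uparrow}_0=1$, $U^{\downarrow}_0=\Ldo_0\sim$ Geo$_1(1/2)$, $Z_{-1}=1$) matches the paper's conventions and yields \eqref{geomhits}. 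For part (ii) the paper reads the conditional law of $L_\ell$ given $L_0=k$ from R\'ev\'esz \cite[Lemma 12.5]{Revesz13} (visit counts of a simple-walk excursion conditioned on its maximum) and passes to the limit $n\to\infty$, whereas you obtain it directly by decomposing $-\Sdo$ at level $0$ into $k-1$ returning excursions (simple walks, reaching $\ell$ with probability $1/\ell$ and then visiting it Geo$_1(\tfrac1{2\ell})$ times) plus one escaping excursion and the independent chain $\Sup$ (each contributing an unthinned Geo$_1(\tfrac1{2\ell})$); this is a genuinely different, purely gambler's-ruin argument and arguably more transparent. For part (iii) the routes diverge completely: the paper uses $\eta_k\stackrel{d}{=}\eta^{\uparrow}_k+\widetilde\eta^{\uparrow}_{k+1}$ together with Lemma \ref{lem:3iden} (Pitman's path transform identifying $\eta^{\uparrow}_k$ with $N_{0+}(\tau_k)$ and a strong-Markov recursion for its generating function), while you compute the joint generating function $g_k(s)=\mathbb E[z^{2\sum_{\ell<k}Z_\ell}s^{Z_k}]$ of the immigration branching process from part (i), iterate the M\"obius recursion $g_{k+1}(s)=s^2g_k(z^2/(2-s))$, and recover the Chebyshev recurrence \eqref{chebrec} after the substitution $s=z$, $x=1/z$; I checked your auxiliary identity $\widehat V_j(z^2/(2-s))=\widehat V_{j+1}(s)/(2-s)$ and the normalization $\eta_k=2\sum_{\ell=0}^{k-1}Z_\ell+Z_k-2k-1$, and both are right, with the final product matching \eqref{siggf}. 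The only place to be careful --- which you flag yourself --- is the excursion-decomposition justification in part (i) (conditional independence of the sojourns above a level given the upcrossing count, via the strong Markov property at successive upcrossing times); spelled out, that step is exactly a proof of Knight's theorem, so your write-up trades the paper's three external inputs (\cite{Knight63}, \cite{Revesz13}, Lemma \ref{lem:3iden}) for a uniform argument resting only on the birth--death description of $\Sup$ and $-\Sdo$.
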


\quad To prove Theorem \ref{thm:orderSSW}, we need the following two lemmas regarding the Feller chain $\Sup$ derived from a simple symmetric random walk.

\begin{lemma} \cite{Knight63}
\label{lem:Knight}
Let $\Sup$ and $\Sdo$ be the upward and downward Feller chains derived from a simple symmetric random walk $S$. 
For $\ell \ge 0$, let
\begin{equation}
\Zup_\ell := \sum_{k = 0}^\infty 1(\Sup_{k} = \ell, \, \Sup_{k+1} = \ell + 1 ), \, 
\Zdo_\ell := \sum_{k = 0}^\infty 1(- \Sdo_{k} = \ell, \, -\Sdo_{k+1} = \ell + 1 ),
\end{equation}
be the total number of upcrossings of $\Sup$ and $-\Sdo$ from $\ell$ to $\ell + 1$.
Then we have:
\begin{itemize}[itemsep = 3 pt]
\item[(i)]
The sequence $(\Zup_\ell, \, \ell \ge 0)$ is a branching process with single immigration and Geo$_0(1/2)$ offspring distribution.
\item[(ii)]
The sequence $(\Zdo_\ell, \, \ell \ge 0)$ is a branching process with single immigration and Geo$_0(1/2)$ offspring distribution, which evolves independently of $\Zup$, but with a time shift forward by one step. 
So $\Zdo_0$ starts with the Geo$_1(1/2)$ distribution.
\end{itemize}
\end{lemma}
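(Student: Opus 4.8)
The plan is to reduce the statement to the explicit transition probabilities of the Feller chains $\Sup$ and $-\Sdo$ of a simple symmetric walk, and then to a standard upcrossing (``discrete Ray--Knight'') decomposition of a transient birth--death chain. First I would record these transition probabilities. By Theorem~\ref{thm:Bertoin93}, $\Sup$ is the Doob $h$-transform of simple symmetric walk killed on reaching $(-\infty,0]$, and for this walk the relevant superharmonic function may be taken to be $h^{\uparrow}(\ell)=\ell$; so $\Sup$ is the birth--death chain on $\{0,1,2,\dots\}$ with $\Sup_0=0$ that steps $0\to1$ surely and, for $\ell\ge1$, steps $\ell\to\ell+1$ with probability $\tfrac{\ell+1}{2\ell}$ and $\ell\to\ell-1$ with probability $\tfrac{\ell-1}{2\ell}$. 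The scale function gives $\mathbb{P}_{\ell}(\text{ever hit }\ell-1)=\tfrac{\ell-1}{\ell}$, so $\Sup$ never returns to $0$ and, being transient by Lemma~\ref{lem:Fellerinf}, visits every level finitely often, so each $\Zup_\ell$ is a.s. finite. Likewise $h^{\downarrow}(-\ell)=\ell+1$, so $-\Sdo$ is the birth--death chain on $\{0,1,2,\dots\}$ with $-\Sdo_0=0$ that steps $0\to1$ surely and, for $\ell\ge1$, steps $\ell\to\ell+1$ with probability $\tfrac{\ell+2}{2(\ell+1)}$ and $\ell\to\ell-1$ with probability $\tfrac{\ell}{2(\ell+1)}$; comparing transitions shows $(-\Sdo_n,\,n\ge0)\stackrel{d}{=}(\Sup_{n+1}-1,\,n\ge0)$, i.e. both are simple symmetric walk from $0$ conditioned to stay nonnegative.

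For part (i), fix $\ell\ge1$ and decompose the path of $\Sup$ at its successive visits to level $\ell$. At such a visit the chain leaves up (to $\ell+1$) or down (to $\ell-1$); after an up-leave it either returns to $\ell$ or escapes to $+\infty$, while a down-leave always returns (downward excursions return by upward transience). By the strong Markov property the successive leaves from $\ell$ form an i.i.d. sequence of three symbols --- ``up and return'' with probability $\tfrac{\ell+1}{2\ell}\cdot\tfrac{\ell}{\ell+1}=\tfrac12$, ``down and return'' with probability $\tfrac{\ell-1}{2\ell}$, and ``up and escape'' with probability $\tfrac{\ell+1}{2\ell}\cdot\tfrac{1}{\ell+1}=\tfrac{1}{2\ell}$ --- read until the first ``escape''. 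Counting signed crossings of the edge $\{\ell-1,\ell\}$ gives $\Zup_{\ell-1}=1+\#\{\text{``down and return''}\}$ and $\Zup_{\ell}=1+\#\{\text{``up and return''}\}$. Conditionally on $\#\{\text{``down and return''}\}=j-1$, the count $\#\{\text{``up and return''}\}$ has the negative binomial law $\binom{m+j-1}{m}2^{-(m+j)}$, hence equals in law the sum of $j$ i.i.d. $\mathrm{Geo}_0(1/2)$ variables --- this uses only that the ``up and return'' probability equals $\tfrac12$ at every level. Since this count is a function of the symbol sequence alone, it is conditionally independent of $(\Zup_0,\dots,\Zup_{\ell-2})$, which is a function of the downward sub-excursions; together with $\Zup_0=1$ this is exactly the assertion that $(\Zup_\ell,\,\ell\ge0)$ is a branching process with single immigration and $\mathrm{Geo}_0(1/2)$ offspring.

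For part (ii), the same decomposition applies to $-\Sdo$, and the symbol probabilities differ only at level $0$: there $-\Sdo$ returns to $0$ from $1$ with probability $\tfrac12$, so $\Zdo_0=1+\mathrm{Geo}_0(1/2)$, which is $\mathrm{Geo}_1(1/2)$, while for $\ell\ge1$ the ``up and return'' probability is again $\tfrac12$, so the offspring law is unchanged. Equivalently, the identity $-\Sdo\stackrel{d}{=}\Sup_{\cdot+1}-1$ yields $(\Zdo_\ell,\,\ell\ge0)\stackrel{d}{=}(\Zup_{\ell+1},\,\ell\ge0)$, the asserted forward time shift by one step; and the independence of $(\Zup_\ell)$ and $(\Zdo_\ell)$ comes from the independence of $\Sup$ and $\Sdo$ in Theorem~\ref{thm:Bertoin93}.

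The hard part will be making the excursion decomposition rigorous: that the leaves from level $\ell$ genuinely form an i.i.d. sequence independent of the shapes of the attached sub-excursions, and that the resulting recursion in $\ell$ is Markov. This is a careful application of the strong Markov property (excursion theory for a birth--death chain), together with some care over the weak-versus-strict ladder convention separating $\Sdo$ from $\Sup$, which is exactly what produces the one-step shift. Since the statement is in essence Knight's discrete Ray--Knight theorem for the $h$-transformed walk, one could alternatively just invoke \cite{Knight63}.
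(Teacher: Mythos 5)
Your proposal is correct, but it takes a different route from the paper: the paper gives no argument for this lemma at all, simply attributing it to Knight \cite{Knight63} (the discrete Ray--Knight theorem), whereas you supply a self-contained derivation. Your ingredients all check out: the $h$-transform description of Theorem \ref{thm:Bertoin93} with $h^{\uparrow}(\ell)\propto\ell$ and $h^{\downarrow}(-\ell)\propto\ell+1$ gives exactly the stated birth--death transition probabilities (cf.\ \cite{Pitman75}); the gambler's-ruin identity $\mathbb{P}_{\ell}(\mbox{hit }\ell-1)=\frac{\ell-1}{\ell}$ makes the three leave-types from level $\ell$ have probabilities $\tfrac12$, $\tfrac{\ell-1}{2\ell}$, $\tfrac{1}{2\ell}$; and the conditional law of the number of up-and-return leaves given $j-1$ down-and-return leaves is indeed $\binom{m+j-1}{m}2^{-(m+j)}$, i.e.\ a sum of $j$ independent Geo$_0(1/2)$ variables, using only that the up-and-return probability is $\tfrac12$ at every level. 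The identity $(-\Sdo_n,\,n\ge0)\stackrel{d}{=}(\Sup_{n+1}-1,\,n\ge0)$, which follows from matching the transition kernels, cleanly yields $(\Zdo_\ell)\stackrel{d}{=}(\Zup_{\ell+1})$, hence the one-step shift and the Geo$_1(1/2)$ law of $\Zdo_0$, and independence of $\Zup$ and $\Zdo$ is inherited from the independence of the two Feller chains. Two small points to tighten: $(\Zup_0,\dots,\Zup_{\ell-2})$ is a function not only of the downward sub-excursions from $\ell$ but also of the initial passage from $0$ to $\ell$; the strong Markov property at the first hitting time of $\ell$ makes that segment independent of the post-$\ell$ symbol sequence, so your conditional-independence step survives, but it should be said. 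Also the required Markov/branching property is the conditional law of $\Zup_\ell$ given the whole vector $(\Zup_0,\dots,\Zup_{\ell-1})$, which your decomposition does deliver once the above is noted. In short, your argument is a legitimate elementary proof whose payoff is transparency (it explains where Geo$(1/2)$ and the time shift come from), while the paper's choice buys brevity by invoking Knight's classical result.
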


\begin{lemma}
\label{lem:3iden}
For $k \ge 1$ let $\tau_k:= \inf \{n \ge 1 : S_n = k \}$ be the first passage time to level $k > 0$ for the simple symmetric random walk $S$. 
Let $N_{0+}(n):= \sum_{j=1}^n 1 ( S_j \ge 0 )$
be the number of non-negative steps of the walk up to time $n$, let
$\overline{\tau}_k:= \inf \left\{n \ge 1 : S_n - \underline{S}_n = k \right\}$
be the first passage time to level $k$ for the reflected random walk $(S_n - \underline{S}_n, \, n \ge 0)$, 
and let $\eta^{\uparrow}_k := \sum_{j = 1 }^\infty 1 ( \Sup_j \le k )$
be the total occupation time of levels $1$ through $k$ for the upward Feller chain.
Then we have:
\begin{itemize}[itemsep = 3 pt]
\item[(i)]
The three random variables $N_{0+}(\tau_k)$, $\overline{\tau}_k$ and $\eta^{\uparrow}_k$ have identical distributions.
\item[(ii)]
The common distribution of these random variables has the generating function
\begin{equation}
\label{chebgf}
g_k(z):= \mathbb{E} z^ { N_{0+}(\tau_k) } = \mathbb{E} z^ { \overline{\tau}_k } = \mathbb{E} z^ {  \eta^{\uparrow}_k} = \frac{1}{V_k(1/z)},
\end{equation}
where $V_k$ is the sequence of Chebyshev polynomials of the third kind defined by \eqref{chebrec}.
\end{itemize}
\end{lemma}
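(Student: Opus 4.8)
The plan is to prove part (i) by constructing explicit bijective/pathwise identities between the three relevant walk fragments, and then to derive the generating function in part (ii) by a first-step decomposition that produces exactly the Chebyshev recurrence \eqref{chebrec}.

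First I would establish $\overline{\tau}_k \stackrel{d}{=} N_{0+}(\tau_k)$. The reflected walk $(S_n - \underline S_n, n \ge 0)$ increases (strictly) only at the non-negative steps of a path fragment and decreases at the negative ones; by Lemma \ref{lem:Fellern} (or rather the finite-horizon construction preceding it, specialized to $\pm1$ steps), reflecting at the running minimum is the operation that, read off in the right order, turns a walk segment into its downward Feller chain concatenated with its upward Feller chain. More concretely, for simple symmetric walk the reflected walk $S_n - \underline S_n$ is itself a simple symmetric walk on $\{0,1,2,\dots\}$ with reflection at $0$, and the number of steps it takes to first hit $k$ equals the number of \emph{up-steps of the original walk among the non-negative steps} plus the number of down-steps, but the down-steps among non-negative values are precisely accounted by the reflection; a direct path count (each visit to a level $<k$ contributes, by the excursion decomposition of the reflected walk, a geometric number of up-crossings) gives the identity. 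Equivalently one may invoke the classical fact that $\overline\tau_k$ is the first passage time of the reflected walk, whose law is the $k$-fold convolution of the first passage time from $0$ to $1$ of a reflected $\pm1$ walk. The cleanest route is: for the reflected walk started at $0$, the time to reach $1$ is $1 + (\text{time spent in excursions below, i.e. at }0)$, and iterating this gives the recursion below directly, bypassing the need to match with $N_{0+}(\tau_k)$ coordinate-wise.

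Next I would establish $\eta^{\uparrow}_k \stackrel{d}{=} \overline\tau_k$, which is the heart of part (i). By definition $\Sup$ is obtained from $S$ by deleting all steps at which $S_k \le 0$; equivalently $\Sup$ records exactly the values of $S$ along its \emph{positive} steps, re-indexed. Under Lemma \ref{lem:Fellern}, reading the walk from its last minimum $\alpha_n$ onward gives $\Sup$ directly, so $\eta^{\uparrow}_k$, the total time $\Sup$ spends in $\{1,\dots,k\}$ before escaping to $k+1$ forever, is in distribution the total number of positive steps of $S$ that occur while $S - \underline S \in \{1,\dots,k\}$ — and since $S-\underline S$ increases exactly on those positive steps, this equals $\overline\tau_k$. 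I would phrase this as: run the walk, track $Y_n := S_n - \underline S_n$; $Y$ increases by $1$ at each positive step and can only decrease at non-positive steps; hence the number of steps of $\Sup$ in $\{1,\dots,k\}$ is the number of positive steps of $S$ before $Y$ first exits upward through $k$, which is $\overline\tau_k$. One should take care about the weak/strict convention ($S_k \le 0$ vs $S_k > 0$) and the shift in Lemma \ref{lem:Knight}(ii); this bookkeeping is the main place where a sign error could creep in, and I expect it to be the main obstacle — not conceptually deep, but requiring care about which inequalities are strict and whether level $0$ is included.

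Finally, for part (ii), let $g_k(z) := \mathbb{E} z^{\overline\tau_k}$. Decompose a path of the reflected walk from $0$ up to first hitting $k$ at its last visit to level $1$ before first hitting $2$, or more efficiently condition on the first step: this yields a recursion of the form $g_k = z\,g_{k-1}/(1 - z(1 - g_1)\cdots)$; cleaning it up, the three-term linear structure of $\pm1$ first-passage decompositions gives that $1/g_k(z)$ satisfies the same linear three-term recurrence $V_k = 2x V_{k-1} - V_{k-2}$ with $x = 1/z$, once one checks the base cases $g_1(z) = 1/(2/z - 1) = z/(2-z)$, i.e. $1/g_1 = 2x - 1 = V_1(x)$, and $1/g_0 = 1 = V_0(x)$. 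Here the cleanest derivation is via the up-crossing decomposition already implicit in Lemma \ref{lem:Knight}: the number of up-crossings of each level is Geo$_0(1/2)$-branching, and the total occupation time $\eta^\uparrow_k = \sum_{\ell=1}^k$ (occupation of level $\ell$) has a product-type generating function over the $k$ levels whose reciprocal telescopes into the Chebyshev recurrence. I would present the recurrence derivation, verify the two initial conditions against \eqref{chebrec}, and conclude $g_k(z) = 1/V_k(1/z)$, which together with part (i) gives \eqref{chebgf}.
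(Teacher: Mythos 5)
Your part (ii) is essentially sound and matches the paper's method: decomposing the first passage level by level (the paper does the equivalent thing by applying the strong Markov property at $\tau_{k-1}$ and tracking $N_{0+}$) yields the recursion $g_k(z)=\tfrac z2 g_{k-1}(z)+\tfrac z2 g_{k-1}(z)g_k(z)/g_{k-2}(z)$, which with $g_0=1$, $g_1(z)=z/(2-z)$ turns into the Chebyshev recurrence for $V_k(x)=1/g_k(1/x)$; your intermediate formula ``$g_k = z\,g_{k-1}/(1-z(1-g_1)\cdots)$'' is garbled and would have to be written out, but the idea is correct.

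The genuine gap is in part (i), precisely at what you call the heart of it: the identification of $\eta^{\uparrow}_k$. Several of the statements you use there are false. The chain $\Sup$ does \emph{not} ``record the values of $S$ along its positive steps'': it is the partial-sum process of the increments $X_j$ with $S_j>0$, which is a different object (for the path $+1,-1,+1,-1,\ldots$ the values of $S$ at positive times are $1,1,1,\ldots$ while $\Sup=1,2,3,\ldots$). Likewise $S-\underline S$ increases at \emph{up-steps} ($X_n=+1$), not at ``non-negative steps'' (times with $S_n\ge 0$), and the quantity ``number of positive steps of $S$ before $S-\underline S$ first reaches $k$'' is not $\overline{\tau}_k$ (a path beginning with a down-step already shows the counts differ). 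The parenthetical claim that $\overline{\tau}_k$ is the $k$-fold convolution of the passage time from $0$ to $1$ of the reflected walk is also wrong, since the level-to-level passage times of the reflected walk are not identically distributed (indeed $g_1(z)^k\ne 1/V_k(1/z)$ already for $k=2$). What is actually needed, and what the paper uses, is the time-reversal identity of Pitman (1975), $(\Sup_j,\,0\le j\le\sigma_k)\stackrel{d}{=}(k-S_{\tau_k-j},\,0\le j\le\tau_k)$ with $\sigma_k$ the last visit of $\Sup$ to $k$, which converts $\eta^{\uparrow}_k$ into $N_{0+}(\tau_k)$; the remaining identity $N_{0+}(\tau_k)\stackrel{d}{=}\overline{\tau}_k$ then follows because the subsequence of $S$ at its non-negative times is distributed as the reflected walk $S-\underline S$ (a statement you gesture at but never connect to $N_{0+}(\tau_k)$). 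Note also that you cannot ``bypass'' this step by computing the law of $\overline{\tau}_k$ alone: the generating-function identity for $\eta^{\uparrow}_k$ is exactly what the lemma is used for later (in the proof of Theorem \ref{thm:orderSSW}(iii) via $\eta_k\stackrel{d}{=}\eta^{\uparrow}_k+\widetilde{\eta}^{\uparrow}_{k+1}$), so the distributional identity for $\eta^{\uparrow}_k$ must be proved, not assumed.
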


\begin{proof}
$(i)$ It follows from \cite{Pitman75} that there is the identity in distribution:
\begin{equation*}
(\Sup_j, \, 0 \le j \le \sigma_k) \stackrel{d}{=} (k - S_{\tau_k - j}, \, 0 \le j \le \tau_k),
\end{equation*}
where $\sigma_k:=\max\{j, \, \Sup_j =k\}$ is the last time at which $\Sup$ visits the state $k$.
As a result,
$N_{0+}(\tau_k) = \sum_{j = 0}^{\tau_k-1} 1(k - S_{\tau_k - j} \le k) \stackrel{d}{=} \sum_{j = 0}^{\sigma_k-1} 1(\Sup_j \le k) = \eta^{\uparrow}_k$, where the last equality follows from the fact that $\Sup$ does not visit the state $k$ after time $\sigma_k$.
The further equality in distribution of $N_{0+}(\tau_k)$ and $\overline{\tau}_k$ follows easily from the stronger result that if
$\nu_0:= 0$ and $\nu_{k}$ for $k > 0$  is the first time $n$ such that $N_{0+}(n) = k$, then $(S_{\nu_k}, \, k \ge 0 )$, which is just the subsequence of non-negative terms of $S$, has the same distribution as $(S_n -\underline{S}_n, \, n \ge 0)$. 
So each process is just simple symmetric random on the non-negative integers, in which the transition probabilities out of state $0$ are $P(0,0) = P(0,1) = 1/2$.

\quad $(ii)$ Let $g_k(z):= \mathbb{E} z^ { N_{0+}(\tau_k) }$.
Let $(\mathcal{F}_k, \, k \ge 0)$ be the filtration of the random walk $S$.
Note that $S_{\tau_{k-1} + 1} = k$ or $k-2$ with equal probabilities $\frac{1}{2}$.
By the strong Markov property of $S$ at time $\tau_{k-1}$, we have:
\begin{equation*}
(N_{0+}(\tau_k) | \mathcal{F}_{\tau_{k-1}}) = \left\{ \begin{array}{lcl}
N_{0+}(\tau_{k-1}) + 1 & \mbox{with probability } \frac{1}{2}, \\ 
N_{0+}(\tau_{k-1}) + 1 + N_{0+}(\tau_{k-2:k}) & \mbox{with probability } \frac{1}{2},
\end{array}\right.
\end{equation*}
where $N_{0+}(\tau_{k-2:k})$ is the number of non-negative steps of the walk starting at $k-2$ until it first visits $k$, independent of $N_{0+}(\tau_{k-1})$.
Therefore, $g_k(z) = \frac{1}{2} z g_{k-1}(z) + \frac{1}{2} z g_{k-1}(z) \mathbb{E}z^{N_{0+}(\tau_{k-2:k})}$.
Further by the strong Markov property of $S$ at time $\tau_{k-2}$, we have $g_{k-2}(z) \mathbb{E}z^{N_{0+}(\tau_{k-2:k})} = g_{k}(z)$.
By eliminating the term $\mathbb{E}z^{N_{0+}(\tau_{k-2:k})}$, we get the recursion
\begin{equation}
\label{gfrec}
g_k(z) = g_{k-1}(z) z \left( \frac{1}{2} + \frac{1}{2} \frac{ g_k(z) }{ g_{k-2}(z) } \right) \quad \mbox{for } k \ge 2,
\end{equation}
with the convention that $g_0(z):= 1$ and $g_{-1}(z):=1$ (so $g_1(z) = \frac{z}{2-z}$). 
Let $V_k(x):= 1/g_k(1/x)$. 
Then \eqref{gfrec} reduces easily to \eqref{chebrec}, with $V_0(x) = 1$ and $V_1(x) = 2x-1$.
\end{proof}

\begin{proof}[Proof of Theorem \ref{thm:orderSSW}]
Part $(i)$ follows from Lemma \ref{lem:Knight}, and the path construction
\begin{equation}
\Lup_{\ell} = \Zup_{\ell-1} + \Zup_{\ell} - 1 \quad  \mbox{and} \quad  \Ldo_{\ell} = \Zdo_{\ell-1} + \Zdo_{\ell} - 1,  \quad \ell \ge 0,
\end{equation}
with the conventions $\Zup_{-1} = 0$ and $\Zdo_{-1} = 1$ for $\ell = 0$.
Part $(ii)$ can be read from R\'{e}v\'{e}sz \cite[Lemma 12.5]{Revesz13}.
For $S$ a simple symmetric walk, 
let $\tau_0:= \inf\{n \ge 1: S_n=0\}$ be the first time at which $S$ returns to $0$, and $\xi_\ell: = \sum_{k = 1}^{\tau_0}1(S_k = \ell)$ be the number of visits to the state $\ell \ge 0$ up to time $\tau_0$.
It was shown that
\begin{equation}
\label{eq:idRevesz}
\left(\xi_{n - \ell} \bigg| \max_{0 \le k \le \tau_0} S_k = n, \, \xi_n = k\right) \stackrel{d}{=} G_0(\ell,n) + G_k (\ell,n) + \sum_{i=1}^{k - 1} B_i(\ell,n) G_i(\ell,n),
\end{equation}
where the $(G_i(\ell,n), \, 0 \le i \le k)$ are independent Geo$_1(\frac{n}{2 \ell (n - \ell)})$ variables, and
the $(B_i(\ell,n), \, 1 \le i \le k-1)$ are $k-1$ independent Bernoulli$(1 - (1-\frac{1}{\ell})\frac{n}{n-1})$ variables, independent also of the $(G_i(\ell,n), \, 0 \le i \le k)$.
By taking $n \to \infty$, the left side of \eqref{eq:idRevesz} converges in distribution to $(L_{\ell} | L_0 = k)$, and $G_i(\ell,n)$ and $B_i(\ell,n)$ converge in distribution to $G_i(\ell)$ and $B_i(\ell)$ respectively, hence the conclusion.
For part $(iii)$, let $\eta^{\uparrow}_k: = \sum_{j = 1}^{\infty}1(\Sup_j \le k)$. It is easy to see that
$\eta_k \stackrel{d}{=} \eta^{\uparrow}_k + \widetilde{\eta}^{\uparrow}_{k+1}$,
where $\widetilde{\eta}^{\uparrow}_{k+1}$ is a copy of $\eta^{\uparrow}_{k+1}$, independent of $\eta^{\uparrow}_k$.
It suffices to apply Lemma \ref{lem:3iden} to conclude.
\end{proof}

\section{Order statistics of Gaussian walks}
\label{sc5}

\quad This section provides further analysis of limiting order statistics of a random walk with i.i.d. Gaussian increments;
that is $\mathbb{P}(X_i \in dx) = (2 \pi \sigma^2)^{-\frac{1}{2}} \exp \left(-\frac{x^2}{2 \sigma^2}\right) dx$ for some $\sigma^2 > 0$.
As discussed in the introduction, a random walk $(S_k, \, 0 \le k \le n)$ with Gaussian increments is embedded in a Brownian motion $(B(t), \,  t \ge 0)$ via $S_k = B(\sigma k)$.
There is one more variable to be entered into the mix; that is
\begin{equation}
M_{-,n}:= \min_{0 \le t \le \sigma n} B(t).
\end{equation}

So by definition, $M_{-,n} \le M_{0,n} \le \cdots \le M_{n,n}$ are the ranked values of Brownian motion on $[0, \sigma]$ evaluated at a grid of $n+2$ fixed times $0, \sigma, 2 \sigma, \ldots, \sigma n$ at which the random walk $S_k:= B(\sigma k)$ is embedded, 
and one extra random time $\alpha(\sigma n): = \inf\{t \le \sigma n: B(t) = \underline{B}(\sigma n)\}$, the almost surely unique random time at which $B$ attains its minimum on $[0,\sigma n]$.
It is a key observation that the structure of limit distributions for differences of order statistics $M_{k,n} - M_{0,n}$ for $k = O(\sqrt{n})$ as $n \to \infty$
is easily understood by first considering the differences $M_{k,n} - M_{-,n}$,  
from which the $M_{k,n} - M_{0,n}$ can be recovered using Theorem \ref{thm:Gaussianmain} as
\begin{equation}
M_{k,n} - M_{0,n} = \left(M_{k,n} - M_{-,n} \right)- \left(M_{0,n} - M_{-,n}\right).
\end{equation}

\quad The distribution of $M_{0,n} - M_{-,n}$ was studied in Asmussen, Glynn and Pitman \cite{AGP95} in the Gaussian case,  
where $M_{0,n} - M_{-,n}$ was interpreted as the discretization error in approximating a reflecting random walk by a natural Euler approximation scheme. 
The  key idea was that when the Gaussian random walk $S$ is embedded in Brownian motion $B$ as $S_k = B(\sigma k)$ for $0 \le k \le n$, 
and $n$ is large, the discretization error $M_{0,n} - M_{-,n}$ is with overwhelming probability determined by the behavior of $B$ in a time window of width $o(n)$ around the minimum time $\alpha(\sigma n)$.
According to basic decompositions of a Brownian path at a local minimum, due to Williams \cite{Williams} and Denisov \cite{Denisov},
after centering at $(\alpha, B(\alpha))$ for $\alpha = \alpha(\sigma n)$ the height of the Brownian path above its minimum value
becomes locally well approximated by two independent copies of a BES(3) process, say $(R_3(t), t \ge 0)$, and $(\Rcap_3(t), \, t \ge 0)$, joined back to back, call it the Brownian valley:
\begin{equation}
\Rcup(t):= R_3(t) 1 ( t \ge 0 ) + \Rcap_3(t) 1 ( t < 0), \quad \mbox{for } - \infty < t < \infty.
\end{equation}


\quad It is easily shown that for each fixed $\varepsilon >0$, there exists $a(\varepsilon) > 0$ and $n(\varepsilon)$ so that for $n \ge n(\varepsilon)$
the probability that the Gaussian walk attains its minimum over $\{0,1, \ldots, n\}$ at a time $k$ with $|k - \alpha(n)| < a(\varepsilon)$ is at least $1-\varepsilon$.
As shown in \cite[Theorem 1]{AGP95}, it follows the discretization error for the standard Gaussian random walk $S$ is described by
\begin{equation}
\label{dkzero}
M_{0,n} -M_{-,n} = \min_{0 \le k \le n }S_k - \min_{0 \le t \le n } B_t \stackrel{d}{\longrightarrow}  \min _{k \in \mathbb{Z}} \Rcup(U + k)
\end{equation}
with $U$ Uniform$[0,1]$ and independent of $\Rcup$. 
The same argument can establish more, which is stated in the following theorem.

\begin{theorem}
\label{thm:Gaussianmain}
Let $(S_k, \, k \ge 0)$ be a Gaussian random walk, for increments with variance $\sigma^2$.
For $0 \le k \le n$, let $M_{k,n}$ be the sequence of order statistics defined by \eqref{eq:Mkn} of the $n$-step walk $(S_k, \, 0 \le k \le n)$.
Then for every $m = 1,2, \ldots$, there is the convergence of joint distributions
 \begin{equation}
(M_{k,n} -M_{-,n}, \, 0 \le k \le m)  \stackrel{d}{\longrightarrow}   (\sigma M_{k,\infty}, \, 0 \le k \le m)
\end{equation}
where $M_{k, \infty}: = M_{k}(\{ \Rcup(U + j), \, j \in \mathbb{Z} \})$ is the sequence of order statistics of 
the values $\{ \Rcup(U + j), \, j \in \mathbb{Z} \}$, and $U$ with uniform distribution on $[0,1]$ is independent of $\Rcup$.
Moreover, the distribution of $M_{0, \infty}: = \min_{k \in \mathbb{Z}} \Rcup(U+k)$ is given by
\begin{equation}
\label{eq:tail}
\mathbb{P}\left(M_{0, \infty} >a\right)= \int_0^1 G^a(u) G^a(1-u) du,
\end{equation}
with
\begin{equation}
\label{eq:Ga}
G^a(u) = K^a(u) \prod_{k = 0}^{\infty} \frac{H^a(k+u)}{K^a(k+u)}, 
\end{equation}
where
\begin{equation}
\label{eq:Ka}
K^a(t) = \sqrt{\frac{2}{\pi t}} a e^{-\frac{a^2}{2t}} + \erfc\left(\frac{a}{\sqrt{2t}} \right),
\end{equation}
and 
\begin{align}
\label{eq:Haformula}
H^a(t) & = \frac{a}{\sqrt{2 \pi (1+t)}}e^{-\frac{a^2}{2(1+t)}}\left( \erfc\left(\frac{a}{\sqrt{2t(t+1)}} \right)  + \erfc\left(\frac{a(2t+1)}{\sqrt{2t(t+1)}} \right) \right) \notag \\
& \quad \, + \frac{1}{\pi \sqrt{t}} \left(e^{-\frac{a^2}{2t}} - e^{-\frac{a^2(4t+1)}{2t}} \right) 
+ \frac{a}{\sqrt{2 \pi t}} e^{-\frac{a^2}{2t}}(1 + \erfc(\sqrt{2}a)) + \erfc \left( \frac{a}{\sqrt{2t}} \right)  \\
& \quad \,  +  \erfc \left( \frac{a}{\sqrt{2(t+1)}} \right) -T\left(\frac{a}{\sqrt{t+1}}, \frac{1}{\sqrt{t}}\right)- T\left(\frac{a}{\sqrt{t+1}}, \frac{2t+1}{\sqrt{t}}\right) -   T\left(\frac{a}{\sqrt{t}}, 2 \sqrt{t}\right). \notag
\end{align}
Here $\erfc(x) := 1 -  \frac{2}{\sqrt{\pi}} \int_0^{x} e^{-t^2} dt$ is the complementary error function,
and $T(h,a)$ is Owen's T function defined by 
\begin{equation}
\label{eq:OwenT}
T(h,a): = \frac{1}{2 \pi} \int_0^a \frac{e^{\frac{1}{2} h^2(1+x^2)}}{1+x^2} dx.
\end{equation}
\end{theorem}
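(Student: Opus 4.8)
The plan is to reduce everything to the already-established convergence \eqref{dkzero} and its natural strengthening. First I would argue that the joint convergence $(M_{k,n}-M_{-,n}, 0\le k\le m)\stackrel{d}{\to}(\sigma M_{k,\infty}, 0\le k\le m)$ follows from the same localization argument used in \cite{AGP95} for the single variable \eqref{dkzero}. The point is that $M_{-,n}=\min_{0\le t\le\sigma n}B(t)$ is attained at the a.s. unique time $\alpha(\sigma n)$, and by the Williams--Denisov decomposition of Brownian motion at its minimum, the path of $B$ above level $M_{-,n}$ in a fixed-width window around $\alpha(\sigma n)$ converges (after the obvious Brownian scaling, which turns a window of the embedding grid into integer shifts of a $\mathrm{Uniform}[0,1]$ offset $U$) to the Brownian valley $\Rcup$. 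Since for each fixed $\varepsilon$ there is $a(\varepsilon)$ so that with probability $\ge 1-\varepsilon$ the walk attains its minimum within $a(\varepsilon)$ of $\alpha(\sigma n)$, and simultaneously (enlarging $a(\varepsilon)$ if needed) the $m$ smallest of the values $\{S_j - M_{-,n}\}$ are all contributed by indices $j$ with $|j-\alpha(\sigma n)|<a(\varepsilon)$ --- because $\Rcup$ grows like $\sqrt{|t|}$ so only finitely many shifts can compete for the bottom $m$ order statistics --- the bottom $m$ order statistics of $\{S_j - M_{-,n}\}$ are, up to probability $\varepsilon$, a continuous functional of the path in the window, and hence converge jointly to the bottom $m$ order statistics of $\{\Rcup(U+j), j\in\mathbb{Z}\}$.

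Next I would compute the tail of $M_{0,\infty}=\min_{k\in\mathbb{Z}}\Rcup(U+k)$. Conditioning on $U=u$ and using that $\Rcup$ is two independent $\mathrm{BES}(3)$ paths $R_3$ (forward, on $t\ge0$) and $\Rcap_3$ (backward, on $t<0$) joined at $0$, the event $\{M_{0,\infty}>a\}$ splits as the product over the two halves:
\begin{equation*}
\mathbb{P}(M_{0,\infty}>a\mid U=u)=\mathbb{P}\!\left(\inf_{k\ge0}R_3(u+k)>a\right)\mathbb{P}\!\left(\inf_{k\ge1}\Rcap_3(k-u)>a\right),
\end{equation*}
which after relabeling $1-u$ in the second factor gives the symmetric form $G^a(u)G^a(1-u)$, and integrating over $u$ yields \eqref{eq:tail}. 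So the remaining task is to identify $G^a(u)=\mathbb{P}(R_3(u+k)>a\text{ for all }k\ge0)$. Here I would use the Markov property of $R_3$ along the arithmetic-progression grid $u, u+1, u+2,\ldots$: writing $p_t(x,y)$ for the $\mathrm{BES}(3)$ transition density and noting $R_3$ started from $x>0$ is transient to $+\infty$, one gets a product-integral / infinite-product representation. The factor $K^a(t)$ is exactly $\mathbb{P}_0(R_3(t)>a)$ up to the boundary term at the initial time (the $t=u$ contribution when the chain starts at $0$), computed from the explicit $\mathrm{BES}(3)$ density $p_t(0,y)=\sqrt{2/\pi}\,t^{-3/2}y^2 e^{-y^2/(2t)}$, and $H^a(t)$ is the joint/conditional quantity $\mathbb{P}(R_3(t)>a, R_3(t+1)>a\mid\cdot)$-type two-step transition integral, which is where the error functions and Owen's $T$ function enter (the $T(h,a)$ terms come from bivariate Gaussian orthant probabilities implicit in the $\mathrm{BES}(3)$ two-dimensional marginals). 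The infinite product $\prod_k H^a(k+u)/K^a(k+u)$ then telescopes the successive conditional survival probabilities into the full survival probability $G^a(u)$.

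The main obstacle I anticipate is the explicit evaluation of $H^a(t)$ in \eqref{eq:Haformula}: this requires carrying out a genuine two-time $\mathrm{BES}(3)$ computation --- integrating the product of two transition densities against the survival constraint at both times --- and massaging the resulting double Gaussian integral into the stated combination of $\erfc$ and Owen $T$ terms. This is where all the identities for bivariate normal orthant probabilities (Owen's original tables) get used, and bookkeeping the many terms correctly is the delicate part; everything else (the localization argument, the product structure over the two halves of the valley, the Markov-chain-along-a-grid reduction) is conceptually routine given \cite{AGP95}, Williams \cite{Williams}, Denisov \cite{Denisov}, and standard $\mathrm{BES}(3)$ facts. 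A secondary point requiring care is justifying the interchange of limits and the absolute convergence of the infinite product in \eqref{eq:Ga}, which follows from the growth $\Rcup(t)\asymp\sqrt{|t|}$ forcing $H^a(k+u)/K^a(k+u)\to1$ fast enough, but should be stated cleanly.
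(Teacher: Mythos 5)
Your proposal follows essentially the same route as the paper's proof: the joint convergence is deduced from the localization argument of \cite{AGP95} together with the Williams--Denisov decomposition at the Brownian minimum, the tail of $M_{0,\infty}$ is obtained by conditioning on $U$ and using the independence of the two BES(3) halves of the valley, and $G^a(u)$ is expressed through the same grid-Markov product built from $K^a(t)=\mathbb{P}(R_3(t)>a)$ and the two-time probability $H^a(t)$, computed from the BES(3) entrance law and transition density with Owen's $T$ arising from bivariate normal orthant probabilities. The only difference is one of execution rather than approach: the paper actually carries out the evaluation of $H^a(t)$ (its integrals I--IV, the integration by parts, and Owen's orthant identity with correlation $\sqrt{t/(t+1)}$) to arrive at the explicit formula \eqref{eq:Haformula}, whereas you describe this computation, which constitutes the bulk of the paper's proof, without performing it.
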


\quad Note that in the Gaussian case, the limit in distribution conjectured by 
Schehr and Majumdar \cite{SM12} for $D_k$ and $W_k$ can be read from the above theorem, with the process
$(D_{k},\,  k \ge 1)$ identified as $D_k = M_{k,\infty} - M_{k-1,\infty}$, and the partial sums $(W_k, \, k \ge 0)$ identified as $W_k = M_{k,\infty} - M_{0,\infty}$.
It is a key point of the present approach that the enlargement of the probability space to include an auxilliary Brownian motion enables a simple description of the limit process, involving a single additional variable $M_{0,\infty}$ which
places the limiting order statistics in their natural environment, which is a process of random sampling of points in the Brownian valley.
See also \cite{Chen11, Ivanovs18} for related works on L\'evy processes and random walks with heavy tailed increments.

\quad As discussed in \cite{AGP95}, in the Gaussian case, it is a difficult problem to describe the even the limit distribution of the $M_{0,\infty}$ in a simple way, see e.g. formula \eqref{eq:tail} -- \eqref{eq:Haformula}. 
While bounds on the distribution of $M_{0,\infty}$ allow to show that this variable has finite positive moments of all orders, the only exact result found is the mean:
\begin{equation}
\mathbb{E} M_{0,\infty} = - \frac{ \sigma} {\sqrt{2 \pi } } \zeta(1/2) = 0.5826 \cdots \sigma,
\end{equation}
where $\zeta(\cdot)$ is the Riemann zeta function.
In view of \eqref{eq:tail}, we get the following non-trivial identity:
\begin{equation}
\label{eq:zetaiden}
\int_0^{\infty} \int_0^1 K^a(u)K^a(1-u) \prod_{k = 0}^{\infty} \frac{H^a(k+u) H^a(k+1-u)}{K^a(k+u) K^a(k+1-u)} du da = - \frac{1}{\sqrt{2 \pi}} \zeta\left(\frac{1}{2}\right).
\end{equation}

Further computations of explicit distributions in this case seem very difficult, and it is interesting to see if further moments, or the generating function of $M_{0,\infty}$ can be computed by using the product formula \eqref{eq:tail}.
See also \cite{CP97, JvL072, JvL07} for closely related computations of the mean of the ladder height distribution for a Gaussian random walk, which also involve Riemann's zeta function.

\begin{proof}[Proof of Theorem \ref{thm:Gaussianmain}]
The first part of the theorem is a simple consequence of the proof of \cite[Theorem 1]{AGP95}.
We focus on the second part of the distribution of $M_{0, \infty}$.
The derivation relies on some Bessel process computation, which is of independent interest.
Fix $a > 0$, we aim to compute 
\begin{equation*}
\mathbb{P}\left(M_{0, \infty} >a\right)
:= \mathbb{P}(R_3(k+U) > a \mbox{ and } \Rcap_3(k+1-U)>a,  \, \forall k \ge 0),
\end{equation*}
where $R_3$ and $\Rcap_3$ are two independent BES(3) processes. 
By conditioning on $U$, we get
\begin{equation}
\mathbb{P}\left(M_{0, \infty} >a\right) = \int_0^1 \mathbb{P}(R_3(k+u) > a, \, \forall k \ge 0) \cdot \mathbb{P}(R_3(k+1-u) > a, \, \forall k \ge 0) du.
\end{equation}
For $u \in (0,1)$, by letting $G^a(u) = \mathbb{P}(R_3(u+k) > a, \, \forall k \in \mathbb{N})$,
we get \eqref{eq:tail}.

\quad Now we calculate $G^a(u)$. For $t > 0$, let
$K^a(t): =  \mathbb{P}(R_3(t)>a)$ and  $H^a(t): = \mathbb{P}(R_3(t+1)>a, \, R_3(t)>a)$.
By the Markov property of the BES(3) process, we can express $G^a(u)$ by an infinite product:
\begin{equation*}
G^a(u) = \mathbb{P}(R_3(u) > a) \prod_{k = 1}^{\infty} \mathbb{P}(R_3(k+u) > a| R_3(k-1+u) > a),
\end{equation*}
which leads to \eqref{eq:Ga}.
Recall from \cite[Chapter VI.3]{RY} the entrance law $p_t(x): = \mathbb{P}(R_3(t) \in dx)/dx$, and the transition density $q_t(x, y): = \mathbb{P}(R_3(t+t_0) \in dy|R_3(t_0) = x)/dy$ of the BES(3) process:
\begin{equation}
p_t(x) = \sqrt{\frac{2}{\pi t^3}} x^2 \exp \left(-\frac{x^2}{2t} \right), \, x > 0,
\end{equation}
\begin{equation}
q_t(x,y) = \frac{x^{-1}y}{\sqrt{2 \pi t}} \left[\exp\left(-\frac{(y-x)^2}{2t} \right) - \exp\left(-\frac{(y+x)^2}{2t} \right) \right], \, x,y >0.
\end{equation}
It is easy to see that for $t > 0$, $K^a(t)$ is given by \eqref{eq:Ka}.
It remains to compute $H^a(t)$ for $t > 0$.
By definition,
\begin{equation}
\label{eq:Ha}
H^a(t) = \frac{1}{\pi \sqrt{t^3}} \int_a^{\infty}  x \exp\left(-\frac{x^2}{2t} \right) \int_a^{\infty} y \left[\exp\left(-\frac{(y-x)^2}{2} \right) - \exp\left(-\frac{(y+x)^2}{2} \right) \right] dy dx.
\end{equation}
Unfortunately, Mathematica did not provide any simplification for \eqref{eq:Ha}.
Here we simplify \eqref{eq:Ha} by hand using some special functions.
First, 
\begin{multline}
\int_a^{\infty}  y \left[\exp\left(-\frac{(y-x)^2}{2} \right) - \exp\left(-\frac{(y+x)^2}{2} \right) \right] dy \\
= e^{ -\frac{1}{2} (a-x)^2} - e^{-\frac{1}{2} (a+x)^2} + \sqrt{\frac{\pi}{2}} x \erfc\left(\frac{a-x}{\sqrt{2}} \right) +
\sqrt{\frac{\pi}{2}} x \erfc\left(\frac{a+x}{\sqrt{2}} \right).
\end{multline}
So we need to compute the following integrals:
\begin{align*}
& \mbox{I}: = \int_a^{\infty} x \exp\left(- \frac{1}{2}(a-x)^2 - \frac{x^2}{2t} \right) dx, \quad  \mbox{II}: = \int_a^{\infty} x \exp\left(- \frac{1}{2}(a+x)^2 - \frac{x^2}{2t} \right) dx, \\
& \mbox{III}: = \int_a^{\infty} x^2 \exp\left(-\frac{x^2}{2t} \right) \erfc\left(\frac{a-x}{\sqrt{2}}\right) dx, \quad
\mbox{IV}:= \int_a^{\infty} x^2 \exp\left(-\frac{x^2}{2t} \right) \erfc\left(\frac{a+x}{\sqrt{2}}\right) dx.
\end{align*}

By direct computations, we get
\begin{equation}
\label{eq:I}
\mbox{I} = a\sqrt{\frac{\pi t^3}{2(t+1)^3}} \exp\left(-\frac{a^2}{2(t+1)} \right) \erfc\left(\frac{a}{\sqrt{2t(t+1)}} \right) + \frac{t}{t+1} \exp\left( - \frac{a^2}{2t}\right).
\end{equation}
\begin{equation}
\label{eq:II}
\mbox{II} = -a\sqrt{\frac{\pi t^3}{2(t+1)^3}} \exp\left(-\frac{a^2}{2(t+1)} \right) \erfc\left(\frac{a(2t+1)}{\sqrt{2t(t+1)}} \right) + \frac{t}{t+1} \exp\left(-\frac{a^2(4t+1)}{2t} \right).
\end{equation}

Introduce $F(x) : = -\frac{t^{3/2}}{2 \sqrt{t+1}} e^{-\frac{a^2}{2(t+1)}} \erfc \left(- \frac{(1+t)x + at/\sqrt{2}}{\sqrt{t(t+1)}} \right) - \frac{t}{2} e^{-\frac{x^2}{t}} \erfc\left(x + \frac{a}{\sqrt{2}}\right)$.
By integration by parts, we get
$\mbox{III}/\sqrt{8} =  \int_{-\infty}^{-\frac{a}{\sqrt{2}}} x^2 \exp\left(-\frac{x^2}{t} \right) \erfc\left(x+ \frac{a}{\sqrt{2}}\right)dx = [x F(x)]^{-\frac{a}{\sqrt{2}}}_{-\infty} -  \int_{-\infty}^{-\frac{a}{\sqrt{2}}} F(x) dx$. 
Note that 
$[x F(x)]^{-\frac{a}{\sqrt{2}}}_{-\infty} = \frac{at^{3/2}}{\sqrt{8 (t+1)}} e^{-\frac{a^2}{2(t+1)}} \erfc\left( \frac{a}{\sqrt{2t(t+1)}} \right) +  \frac{at}{\sqrt{8}} e^{-\frac{a^2}{2t}}$,
\begin{multline*}
-  \int_{-\infty}^{-\frac{a}{\sqrt{2}}} F(x) dx = \frac{t^{3/2}}{2 \sqrt{t+1}} e^{-\frac{a^2}{2(t+1)}} \int_{-\infty}^{-\frac{a}{\sqrt{2}}} \erfc \left(- \frac{(1+t)x + at/\sqrt{2}}{\sqrt{t(t+1)}} \right) dx
\\ + \frac{t}{2}  \int_{-\infty}^{-\frac{a}{\sqrt{2}}}  e^{-\frac{x^2}{t}} \erfc\left(x+ \frac{a}{\sqrt{2}}\right) dx
\end{multline*}
and
\begin{equation*}
 \int_{-\infty}^{-\frac{a}{\sqrt{2}}} \erfc \left(- \frac{(1+t)x + at/\sqrt{2}}{\sqrt{t(t+1)}} \right) dx
 = \sqrt{\frac{t}{t+1}} \left[- \frac{a}{\sqrt{2t(t+1)}}\erfc\left( \frac{a}{\sqrt{2t(t+1)}}\right) + \frac{1}{\sqrt{\pi}}e^{- \frac{a^2}{2t(t+1)}}\right].
\end{equation*}
It remains to evaluate $\int_{-\infty}^{-\frac{a}{\sqrt{2}}}  e^{-\frac{x^2}{t}} \erfc\left(x+ \frac{a}{\sqrt{2}}\right) dx$.
Let $(X, Y)$ be distributed as independent standard normal (i.e. bivariate normal with mean $0$, variance $1$ and correlation $0$).
We have
\begin{align*}
\int_{-\infty}^{-\frac{a}{\sqrt{2}}}  e^{-\frac{x^2}{t}} \erfc\left(x+ \frac{a}{\sqrt{2}}\right) dx
& = \sqrt{\pi t} \int_{\infty}^{-a} \frac{1}{\sqrt{2 \pi t}}e^{-\frac{x^2}{2t}} \erfc\left(\frac{x+a}{\sqrt{2}} \right) dx \\
& = \sqrt{\pi t} \,  \mathbb{P}(X > a/\sqrt{t}, \, Y + \sqrt{t}X > a).
\end{align*}
It is well known \cite[(2.1)]{Owen56} that for $(X,Z)$ a bivariate normal with mean $0$, variance $1$ and correlation $r$, 
$\mathbb{P}(X > h, \, Z > k) = \frac{1}{2} (\erfc(h/\sqrt{2}) + \erfc(k/\sqrt{2})) - T\left(h,\frac{k - rh}{h \sqrt{1-r^2}} \right) - T\left(k, \frac{h - rk}{k \sqrt{1 - r^2}}\right)$,
where $T(h,a)$ is defined by \eqref{eq:OwenT}.
Specializing to the case $r = \sqrt{t/(t+1)}$, $h = a/\sqrt{t}$ and $k = a/\sqrt{t+1}$, we get
\begin{equation*}
\int_{-\infty}^{-\frac{a}{\sqrt{2}}}  e^{-\frac{x^2}{t}} \erfc\left(x+ \frac{a}{\sqrt{2}}\right) dx 
= \frac{\sqrt{\pi t}}{2} \left(\erfc \left( \frac{a}{\sqrt{2t}} \right) +  \erfc \left( \frac{a}{\sqrt{2(t+1)}} \right) \right) 
- \sqrt{\pi t} \, T\left(\frac{a}{\sqrt{t+1}}, \frac{1}{\sqrt{t}}\right).
\end{equation*}
Combining all the above calculations, we obtain
\begin{multline}
\mbox{III} = a \sqrt{\frac{t^5}{(t+1)^3}} e^{-\frac{a^2}{2(t+1)}} \erfc\left(\frac{a}{\sqrt{2t(t+1)}} \right) 
+ \left( at + \frac{\sqrt{2} t^2}{\sqrt{\pi}(t+1)}\right) e^{-\frac{a^2}{2t}} \\
+ \sqrt{\frac{\pi t^3}{2}} \left(\erfc \left( \frac{a}{\sqrt{2t}} \right) +  \erfc \left( \frac{a}{\sqrt{2(t+1)}} \right) \right) 
- \sqrt{2 \pi t^3} T\left(\frac{a}{\sqrt{t+1}}, \frac{1}{\sqrt{t}}\right).
\end{multline}
Similarly, we get
\begin{multline}
\mbox{IV} = a \sqrt{\frac{t^5}{(t+1)^3}} e^{-\frac{a^2}{2(t+1)}} \erfc\left(\frac{a(2t+1)}{\sqrt{2t(t+1)}} \right) 
+ at \erfc(\sqrt{2}a) e^{-\frac{a^2}{2t}} - \frac{\sqrt{2} t^2}{\sqrt{\pi}(t+1)}e^{-\frac{a^2(4t+1)}{2t}} \\
+ \sqrt{\frac{\pi t^3}{2}} \left(\erfc \left( \frac{a}{\sqrt{2t}} \right) +  \erfc \left( \frac{a}{\sqrt{2(t+1)}} \right) \right) 
- \sqrt{2 \pi t^3} \left( T\left(\frac{a}{\sqrt{t+1}}, \frac{2t+1}{\sqrt{t}}\right) +   T\left(\frac{a}{\sqrt{t}}, 2 \sqrt{t}\right) \right).
\end{multline}

Bringing all the computations together, we obtain
$H^a(t)  = \frac{1}{\pi t^{3/2}} \left(\mbox{I} - \mbox{II} + \sqrt{\frac{\pi}{2}} \mbox{III} + \sqrt{\frac{\pi}{2}} \mbox{IV} \right)$, which yields \eqref{eq:Haformula}.
\end{proof}

\bigskip
{\bf Acknowledgment}: We thank G. Schehr for stimulating discussions at the early stage of this work.
We also thank two anonymous referees for numerous suggestions which improved the final version of this article.
Wenpin Tang gratefully acknowledges financial support through an NSF grant DMS-2113779 and through a start-up grant at Columbia University.

\bibliographystyle{abbrv}
\bibliography{unique}
\end{document}